\renewcommand{\k}{k}
\renewcommand{\Vec}{\operatorname{Vec}}
\DeclareMathOperator{\Quot}{Quot}
\DeclareMathOperator{\Tot}{Tot}
\DeclareMathOperator{\id}{id}
\DeclareMathOperator{\br}{br}
\DeclareMathOperator{\Cl}{Cl}
\renewcommand{\O}{{\mathcal O}}
\newcommand{\charac}{\operatorname{char}}
\DeclareMathOperator{\shExt}{\mathscr{E}xt}
\else\declaretheorem[parent=section]{theorem}\fi
\else\declaretheorem[sibling=theorem]{corollary}\fi
\else\declaretheorem[sibling=theorem]{lemma}\fi
\else\declaretheorem[sibling=theorem]{proposition}\fi
\else\declaretheorem[sibling=theorem]{question}\fi
\else\declaretheorem[sibling=theorem, style=definition]{example}\fi
\declaretheorem[sibling=theorem, style=remark]{remark}\fi
\providecommand {\Z}{{\bf Z}}
\providecommand {\Q}{{\bf Q}}
\providecommand {\C}{{\bf C}}
\renewcommand {\P}{{\bf P}}
\providecommand {\A}{{\bf A}}
\providecommand {\from}{{\colon}}
\providecommand{\spec}{\operatorname{Spec}}
\providecommand{\coker}{\operatorname{coker}}
\providecommand{\Hom}{\operatorname{Hom}}
\providecommand{\Ext}{\operatorname{Ext}}
\providecommand{\End}{\operatorname{End}}
\providecommand{\Pic}{\operatorname{Pic}}
\providecommand{\Sym}{\operatorname{Sym}}
\providecommand{\rk}{\operatorname{rk}} 
\declaretheorem[sibling=theorem,style=remark]{remark}
\numberwithin{equation}{section}
\title{Vector bundles and finite covers}
\author{Anand Deopurkar \& Anand Patel}
\address{Mathematical Sciences Institute \\ Australian National University, \\ Acton, ACT, Australia}
\email{anand.deopurkar@anu.edu.au}
\address{Department of Mathematics \\ Oklahoma State University \\ Stillwater, OK}
\email{anand.patel@okstate.edu}
\begin{document}
\begin{abstract} 
  Motivated by the problem of finding algebraic constructions of finite coverings in commutative algebra, the Steinitz realization problem in number theory, and the study of Hurwitz spaces in algebraic geometry, we investigate the vector bundles underlying the structure sheaf of a finite flat branched covering.
  We prove that, up to a twist, every vector bundle on a smooth projective curve arises from the direct image of the structure sheaf of a smooth, connected branched cover. 
\end{abstract}
\maketitle

\section{Introduction}

Associated to a finite flat morphism $\phi \from X \to Y$ is the vector bundle $\phi_* \O_X$ on $Y$.
This is the bundle whose fiber over $y \in Y$ is the vector space of functions on $\phi^{-1}(y)$.
In this paper, we address the following basic question: which vector bundles on a given $Y$ arise in this way?
We are particularly interested in cases where $X$ and $Y$ are smooth projective varieties.

Our main result is that, up to a twist, every vector bundle on a smooth projective curve $Y$ arises from a branched cover $X \to Y$ with smooth projective $X$.
Let $d$ be a positive integer and let $\k$ be an algebraically closed field with $\charac \k = 0$ or $\charac \k > d$.
\begin{theorem}[Main]
  \label{thm:main}
  Let $Y$ be a smooth  projective curve over $k$ and let $E$ be a vector bundle of rank $(d-1)$ on $Y$.
  There exists an integer $n$ (depending on $E$) such that for any line bundle $L$ on $Y$ of degree at least $n$, there exists a smooth curve $X$ and a finite map $\phi \from X \to Y$ of degree $d$ such that $\phi_* \O_X$ is isomorphic to $\O_Y \oplus E^{\vee} \otimes L^{\vee}$.
\end{theorem}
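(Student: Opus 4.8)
\emph{Proof plan.} The plan is to realize $X$ as a subscheme of a projective bundle over $Y$, following the structure theory of finite covers. Set $\mathcal E = E\otimes L$; since $\charac\k = 0$ or $\charac\k > d$, the direct image of any finite cover splits off a copy of $\O_Y$ via the trace, so what we want is a degree $d$ Gorenstein cover with Tschirnhausen bundle $\mathcal E^\vee$. Such a cover embeds in $\P(\mathcal E)\xrightarrow{\pi}Y$ (with $\pi_*\O(1) = \mathcal E$) so that the fiber $X_y\subset\P_y = \P^{d-2}$ consists of $d$ points in linearly general position and $\mathcal I_{X}$ admits a self-dual resolution of a fixed ``scrollar'' shape
\[ 0\to\pi^*(\det\mathcal E)^\vee\otimes\O(-d)\to\cdots\to\pi^* N_1\otimes\O(-2)\to\mathcal I_{X}\to 0, \]
where the $N_i$ are explicit functorial constructions from $\mathcal E$ (for instance $\rk N_1 = \binom d2 - d$, its fiber over $y$ being the space of quadrics through $X_y$). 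So the idea is to choose $\deg L$ large, write down a \emph{general} complex of this shape, take $X$ to be the support of the cokernel, and then check the three things we need: that $X\to Y$ is finite flat of degree $d$ with $\phi_*\O_X\cong\O_Y\oplus\mathcal E^\vee$, and that $X$ is smooth and connected.

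First I would fix $n$ so that for $\deg L\ge n$ the bundle $\mathcal E = E\otimes L$ is globally generated with vanishing higher cohomology (and the same after twisting by the fixed data above); this makes $\O_{\P(\mathcal E)}(1)$ very ample, makes the terms of the resolution globally generated after a single further twist by $\pi^*A$ ($A$ ample on $Y$), and forces the relevant higher direct images to vanish. Next, following the structure theory, I would construct $X$: for $d=3$ as a general divisor in $|\O(3)\otimes\pi^*\mathcal N|$ with $\mathcal N$ forced by determinants, for $d=4$ as a general relative complete intersection of two divisors in $|\O(2)\otimes\pi^*(-)|$, for $d=5$ as the Pfaffian locus of a general relatively skew-symmetric map, and for general $d$ as a general map realizing the tail of the complex above. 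For a general choice the fibers are $d$ points in linearly general position, so $\phi$ is finite flat of degree $d$; pushing the resolution forward by $\pi$ and using the vanishing from the previous step collapses it to $0\to\O_Y\to\phi_*\O_X\to\mathcal E^\vee\to0$, which splits by the trace, giving $\phi_*\O_X\cong\O_Y\oplus E^\vee\otimes L^\vee$ as required.

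It remains to arrange smoothness and connectivity. Smoothness of a general member follows from a relative Bertini theorem over $Y$, using the very ampleness and global generation established above. A smooth curve is the disjoint union of its connected components, so connectivity is equivalent to irreducibility; I would get it either from a monodromy argument showing the general cover in the family is connected, or by exhibiting a single irreducible member (e.g.\ by degenerating to an irreducible nodal cover) and invoking openness of the good locus. Combining these steps produces the map $\phi\from X\to Y$ of the theorem. (Alternatively, one can bypass the projective bundle and instead build an explicit connected \emph{nodal} degree $d$ cover $X_0\to Y$ with the prescribed $\phi_*\O_{X_0}$, assembled from copies of $Y$ glued at points via elementary modifications, and then smooth its nodes inside the family of degree $d$ covers; but controlling the Tschirnhausen bundle under smoothing is delicate, since bundles on curves are not rigid.)

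The step I expect to be the real obstacle is making the construction uniform in $d$. For $d\le 5$ the subscheme $X$ is a complete intersection or a Pfaffian locus, so ``general'' is transparent and the direct image computation is routine; but for $d\ge 6$ there is no closed-form description of the scrollar resolution, and one must prove that a sufficiently general choice of the resolution data over $Y$ globalizes to an honest finite flat cover whose Tschirnhausen bundle is \emph{exactly} $E^\vee\otimes L^\vee$ — in particular ruling out extra summands in $\phi_*\O_X$ arising from special fibers where the $d$ points degenerate. Reducing this to the fiberwise statement about $d$ general points in $\P^{d-2}$ and their Eagon--Northcott resolution, and then spreading that statement out over $Y$, is the technical heart of the argument, and the freedom to take $\deg L$ arbitrarily large is precisely what supplies the global generation and vanishing that this requires.
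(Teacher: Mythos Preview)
Your primary approach via the Casnati--Ekedahl structure theory has a genuine gap for $d \geq 6$, and you correctly identify it yourself: there is no closed-form description of the scrollar resolution, and in particular the bundles $N_i$ in the middle of the complex are \emph{not} determined by $\mathcal E$ alone. So ``write down a general complex of this shape'' is not a well-posed instruction---you would first have to choose the $N_i$, and for $d\ge 6$ there is no known criterion for which choices of $N_i$ and maps yield an exact complex resolving an honest finite flat cover, let alone a smooth one. The paper is explicit that this is why direct attacks along your lines are ``unfeasible'' for $d\ge 6$; the degree $\le 5$ cases you sketch were already done by Kanev using exactly the explicit descriptions you mention.

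Ironically, the alternative you dismiss in your parenthetical is essentially what the paper does, and your stated worry---``controlling the Tschirnhausen bundle under smoothing is delicate, since bundles on curves are not rigid''---is precisely what the paper's key insight resolves. The trick is the \emph{canonical affine embedding}: any cover $\phi$ with Tschirnhausen bundle $E$ embeds canonically in the total space $\Tot(E)$, and conversely any curve $X\subset \Tot(E)$ whose fibers are $d$ points in affine general position has Tschirnhausen bundle exactly $E$ (this is \autoref{lem:can}). So smoothing $X$ \emph{inside} $\Tot(E)$ pins the Tschirnhausen bundle completely; the non-rigidity of bundles is irrelevant because the ambient space already records $E$. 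The paper then (i) builds a nodal cover with split $E_0 = L_1\oplus\cdots\oplus L_{d-1}$ by your gluing-copies-of-$Y$ idea, (ii) attaches vertical rational normal curves to force $H^1$ of the normal bundle in $\Tot(E_0\otimes L)$ to vanish, (iii) smooths inside that total space, and (iv) handles an arbitrary $E$ by isotrivially degenerating it to a split $E_0$ and deforming the resulting cover back over the family---again using $H^1(N)=0$ to guarantee the Hilbert scheme is smooth over the base. Step (ii), the attachment of rational normal curves to achieve positivity of the normal bundle, is the technical heart and is entirely absent from your proposal.
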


The reason for the $\O_Y$ summand is as follows.
Pull-back of functions gives a map $\O_Y \to \phi_* \O_X$, which admits a splitting by $1/d$ times the trace map.
Therefore, every bundle of the form $\phi_* \O_X$ contains $\O_Y$ as a direct summand.
The dual of the remaining direct summand is called the \emph{Tschirnhausen bundle} and is denoted by $E = E_\phi$ (the dual is taken as a convention.)
\autoref{thm:main} says that on a smooth projective curve, a sufficiently positive twist of every vector bundle is Tschirnhausen.

The reason for needing the twist is a bit more subtle, and arises from some geometric restrictions on Tschirnhausen bundles.
For $Y = \P^n$ and a smooth $X$, the Tschirnhausen bundle $E$ is ample by a result of Lazarsfeld \cite{laz:80}.
For more general $Y$ and smooth $X$, it enjoys several positivity properties as shown in \cite{pet.som:00,pet.som:04}.
The precise necessary and sufficient conditions for being Tschirnhausen (without the twist) are unknown, and seem to be delicate even when $Y = \P^1$.

The attempt at extending \autoref{thm:main} to higher dimensional varieties $Y$ presents interesting new challenges.
We discuss them through some examples in \autoref{sec:higher_dimensional_}.
As it stands, the analogue of \autoref{thm:main} for higher dimensional varieties $Y$ is false.
We end the paper by posing modifications for which we are unable to find counterexamples.

\subsection{Motivation and related work}
The question of understanding the vector bundles associated to finite covers arises in many different contexts.
We explain three main motivations below.
\subsubsection{The realization problem for finite covers}
Given a space $Y$ and a positive integer $d$, a basic question in algebraic geometry is to find algebraic constructions of all possible degree $d$ branched coverings of $Y$.
The prototypical example occurs when $d = 2$.
A double cover $X \to Y$ is given as $X = \spec(\O_Y \oplus L^\vee)$ where $L$ is a line bundle on $Y$, and the algebra structure on $\O_Y \oplus L^\vee$ is specified by a map $L^{\otimes -2} \to \O_Y$ of $\O_Y$-modules.
In other words, the data of a double cover consists of a line bundle $L$ and a section of $L^{\otimes 2}$.
In general, a degree $d$ cover $X \to Y$ is given as $X = \spec (\O_Y \oplus E^\vee)$ where $E$ is a vector bundle on $Y$ of rank $(d-1)$.
The specification of the algebra structure, however, is much less obvious.
For higher $d$, it is far from clear that simple linear algebraic data determines an algebra structure.
In fact, given an $E$ it is not clear whether there exists a (regular/normal/Cohen-Macaulay) $\O_Y$-algebra structure on $\O_Y \oplus E^\vee$, that is, whether $E$ can be realized as the Tschirnhausen bundle of a cover $\phi \from X \to Y$  for some (regular/normal/Cohen-Macaulay) $X$.
We call this the \emph{realization problem} for Tschirnhausen bundles.

For $d = 3, 4$, and $5$, theorems of Miranda, Casnati, and Ekedahl provide a linear algebraic description of degree $d$ coverings of $Y$ in terms of vector bundles on $Y$ \cite{mir:85,cas.eke:96}.
These descriptions give a direct method for attacking the realization problem for $d$ up to $5$.
For $d \geq 6$, however, no such description is known, and finding one is a difficult open problem.
\autoref{thm:main} solves the realization problem for all $d$ up to twisting by a line bundle, circumventing the lack of effective structure theorems.

The realization problem has attracted the attention of several mathematicians, even in the simplest non-trivial case, namely where $Y = \P^1$ \cite{ohb:97,cop:99, sch:86, bal:01}.
Historically, this problem for $Y = \P^1$ is known as the problem of classifying \emph{scrollar invariants}.
Recall that every vector bundle on $\P^1$ splits as a direct sum of line bundles.
Suppose $\phi \from X \to Y = \P^1$ is a branched cover with $X$ smooth and connected.
Writing $E_\phi = \O(a_{1}) \oplus \dots \oplus \O(a_{d-1})$, the scrollar invariants of $\phi$ are the integers $a_1, \dots, a_{d-1}$. 
For $d = 2$, any positive integer $a_1$ is realized as a scrollar invariant of a smooth double cover.
For $d = 3$, a pair of positive integers $(a_1, a_2)$ with $a_1 \leq a_2$ is realized as scrollar invariants of smooth triple coverings if and only if $a_2\leq 2a_1$ \cite[\S~9]{mir:85}.
Though it may be possible to use the structure theorems to settle the cases of $d = 4$ and $5$, such direct attacks are infeasible for $d \geq 6$.
Nevertheless, the picture emerging from the collective work of several authors \cite{cop:99,ohb:97}, and visible in the $d = 3$ case, indicates that if the $a_i$ are too far apart, then they cannot be scrollar invariants.

\autoref{thm:main} specialized to $Y = \P^1$ says that the picture is the cleanest possible if we allow twisting by a line bundle.
\begin{corollary}
  \label{cor:P1}
  Let $a_1, \dots, a_{d-1}$ be integers.
  For every sufficiently large $c$, the integers $a_1 + c, \dots, a_{d-1}+c$ can be realized as scrollar invariants of $\phi \from X \to \P^1$ where $X$ is a smooth projective curve.
\end{corollary}
Before our work, the work of Ballico \cite{bal:01} came closest to a characterization of scrollar invariants up to a shift.
He showed that one can arbitrarily specify the smallest $d/2$ of the $(d-1)$ scrollar invariants.
\autoref{cor:P1} answers the question completely: one can in fact arbitrarily specify \emph{all} of them.
\subsubsection{Arithmetic analogues}
The realization problem of Tschirnhausen bundles is a well-studied and difficult open problem in number theory.
When $\phi \from \spec \O_L \to \spec \O_K$ is the map corresponding to the extension of rings of integers of number fields $L/K$, the isomorphism class of $E_\phi$ is encoded by its Steinitz class, which is the ideal class  $\det E \in \Cl (K)$.
Indeed, the structure theorem of projective modules over a Dedekind domain \cite{ser:58} says that every projective module $E$ of rank $(d-1)$ is isomorphic to $\O_K^{d-2} \oplus \det E$ as an $\O_K$-module.
A long-standing unsolved problem in number theory is to prove that, for each fixed degree $d \geq 2$, every element of the class group is realized as the Steinitz class of some degree $d$ extension of $K$.
The first cases ($d \leq 5$) of this problem follow from the work of Bhargava, Shankar, and Wang \cite[Theorem 4]{bha.sha.wan:15}.  
In general, the realization problem for Steinitz classes is open, with progress under various conditions on the Galois group; see \cite{byo.gre.sod-gui:06} and the references therein.

\autoref{thm:main} completely answers the complex function field analogue of the realization problem for Steinitz classes.
\begin{corollary}
  Suppose $Y$ is a smooth affine curve, and $I \in \Pic(Y)$.
  Then $I$ is realized as the Steinitz class of a degree $d$ covering $\phi \from X \to Y$, with $X$ smooth and connected.
  That is, there exists $\phi \from X \to Y$ with $X$ smooth and connected such that
  \[ E_\phi \cong \O_Y^{d-2} \oplus I.\]
\end{corollary}
\begin{proof}
  Extend $E$ to a vector bundle $E'$ on the smooth projective compactification $Y'$ of $Y$. Apply \autoref{thm:main} to $E'$, twisting by a sufficiently positive line bundle $L$ on $Y'$ whose divisor class is supported on the complement $Y' \setminus Y$.  We obtain a smooth curve $X'$ and a map $\phi \from X' \to Y'$ whose Tschirnhausen bundle is $E' \otimes L$; letting $X = \phi^{-1}(Y)$, we obtain the corollary.
\end{proof}
We note that the affine covers in the above corollary have full $(S_d)$ monodromy groups, as can easily be deduced from the method of proof of \autoref{thm:main}.

The analogy between the arithmetic and the geometric realization problems discussed above for affine curves extends further to projective curves, provided we interpret the projective closure of an arithmetic curve like $\spec \O_K$ in the sense of Arakelov geometry \cite{sou:92}.
For simplicity, take $K = \Q$ and $Y = \P^1$.
A vector bundle on a ``projective closure'' of $\spec \Z$ in the Arakelov sense is a free $\Z$-module $E$ with a Hermitian form on its complex fiber $E \otimes \C$.
Let $L/\Q$ be an extension of degree $d$.
The Tschirnhausen bundle $E_\phi$ of $\phi \from \spec \O_L \to \spec \O_K$ is naturally an Arakelov bundle, where the Hermitian form is induced by the trace.
Thus, the realization problem has a natural interpretation in the Arakelov sense.
An Arakelov bundle over $\spec \Z$ of rank $r$ is just a lattice of rank $r$, and the set of such lattices (up to isomorphism and scaling) forms an orbifold (a double quotient space), denoted by $\mathcal S_r$.
A theorem of Bhargava and Harron says that for $d \leq 5$, the (Arakelov) Tschirnhausen bundles are equidistributed in $\mathcal S_{d-1}$ \cite[Theorem~1]{bha.har:16}.
Again, one crucial ingredient in their proof is provided by the structure theorems for finite covers.
We may view \autoref{cor:P1} as a (complex) function field analogue, but for all $d$.

\subsubsection{Geometry of Hurwitz spaces}
Another source of motivation for \autoref{thm:main} concerns the geometry of moduli spaces of coverings, known as Hurwitz spaces.
For simplicity, take $\k = \C$ and let $Y$ be a smooth projective curve over $\k$.
Denote by $H_{d,g}(Y)$ the coarse moduli space that parametrizes primitive covers $\phi \from X \to Y$ where $\phi$ is a map of degree $d$ and $X$ is a smooth curve of genus $g$ (the cover $\phi$ is primitive if $\phi_* \from \pi_1(X) \to \pi_1(Y)$ is surjective).
The space $H_{d,g}(Y)$ is an irreducible algebraic variety \cite[Theorem~9.2]{gab.kaz:87}.

The association $\phi \leadsto E_\phi$ gives rise to interesting cycles on $H_{d,g}(Y)$, called the Maroni loci.
For a vector bundle $E$ on $Y$, define the \emph{Maroni locus} $M(E) \subset H_{d,g}(Y)$ as the locally closed subset that parametrizes covers with Tschirnhausen bundle isomorphic to $E$.
This notion generalizes the classical Maroni loci for $Y = \P^1$, which play a key role in describing the cones of various cycles classes on $H_{d,g}(Y)$ in \cite{deo.pat:13} and \cite{pat:15}.
It would be interesting to know if the cycle of $\overline{M(E)}$ has similar distinguishing properties, such as rigidity or extremality, more generally than for $Y = \P^1$.
A first step towards this study is to determine when these cycles are non-empty and of the expected dimension.
As a consequence of the method of proof of the main theorem, we obtain the following.
\begin{theorem}\label{cor:expectdim}
  Set $b = g-1-d(g_Y-1)$.
  Let $E$ be a vector bundle on $Y$ of rank $(d-1)$ and degree $e$.
  If $g$ is sufficiently large (depending on $Y$ and $E$), then for every line bundle $L$ of degree $b-e$, the Maroni locus $M(E \otimes L) \subset H_{d,g}(Y)$ contains an irreducible component having the expected codimension $h^{1}(\End E)$.
\end{theorem}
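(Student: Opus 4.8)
The plan is to find a single cover lying in $M(E\otimes L)$ at which this locus is smooth of the expected codimension; fix a line bundle $L$ of degree $b-e$ (note $\deg L$ grows with $g$) and set $F=E\otimes L$, so $\deg F=b$ and $\End F\cong\End E$. The Hurwitz space $H_{d,g}(Y)$ is smooth: a degree-$d$ cover $\phi\from X\to Y$ of smooth curves is a local complete intersection, its relative cotangent complex is the torsion sheaf $\Omega_{X/Y}$ in degree $0$, and the obstruction space $\Ext^2_{\O_X}(\Omega_{X/Y},\O_X)\cong H^1\!\big(X,\shExt^1_{\O_X}(\Omega_{X/Y},\O_X)\big)$ vanishes since that sheaf is supported on the ramification divisor; thus $\dim_\phi H_{d,g}(Y)=\dim\Def(\phi/Y)$ and $\Def(\phi/Y)$ is unobstructed. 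Pushing a first-order deformation of $\phi$ forward and retaining only the $\O_Y$-module $\phi_*\O_X=\O_Y\oplus F^{\vee}$ --- whose $\O_Y$-summand deforms canonically, via unit and trace --- yields a linear map $d\tau_\phi\from\Def(\phi/Y)\to\Ext^1_{\O_Y}(F,F)=H^1(Y,\End E)$. Over a suitable étale neighbourhood $U$ of $\phi$, the universal Tschirnhausen bundle is classified by a map $c\from U\to\Ext^1_{\O_Y}(F,F)$ with $M(F)\cap U=c^{-1}(0)$ set-theoretically (in the versal family of $F$, only the origin carries a bundle isomorphic to $F$) and $dc_\phi=d\tau_\phi$; since $\dim\Ext^1_{\O_Y}(F,F)=h^1(\End E)$, every component of $M(F)$ through $\phi$ has codimension $\le h^1(\End E)$, with equality --- and $M(F)$ smooth at $\phi$ of that codimension --- exactly when $d\tau_\phi$ is surjective. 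So it suffices to produce, for $g$ large and every $L$ of degree $b-e$, a smooth connected primitive degree-$d$ cover $\phi\from X\to Y$ with $E_\phi\cong F$ and $d\tau_\phi$ surjective.

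The cover is furnished by \autoref{thm:main} (more precisely, by its proof, which deforms the square-zero $\O_Y$-algebra $\O_Y\oplus F^{\vee}$, keeping the underlying module fixed, to a smooth cover; a general such deformation has simple branching and full symmetric monodromy, hence connected $X$ and primitive $\phi$, and $E_\phi\cong F$ by construction). It remains to see that $d\tau_\phi$ is surjective for this $\phi$. Standard deformation theory of the pair ``cover together with its underlying $\O_Y$-module'' --- using that $\Def(\phi/Y)$ is unobstructed and $H^2(Y,\End F)=0$ --- identifies $\coker d\tau_\phi$ with the obstruction space for deformations of the $\O_Y$-algebra structure on the fixed module $\O_Y\oplus F^{\vee}$; equivalently, with the obstruction to smoothness, at the point representing $\phi$, of the scheme $\mathcal Z$ of commutative associative multiplications on $\O_Y\oplus F^{\vee}$. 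Now $\mathcal Z$ is cut out, inside the vector space $\Hom_{\O_Y}\!\big(\Sym^2 F^{\vee},\,\O_Y\oplus F^{\vee}\big)=H^0\!\big(Y,\Sym^2 F\otimes(\O_Y\oplus F^{\vee})\big)$, by the quadratic associativity equations, and its obstruction-to-smoothness at $\phi$ is computed by $H^1$ of a coherent sheaf on $Y$ assembled functorially from $F$ (a subquotient of $\Sym^2 F\otimes(\O_Y\oplus F^{\vee})$ and $\Sym^3 F\otimes(\O_Y\oplus F^{\vee})$, built from $\O_Y$, $F^{\vee}$ and the symmetric powers of $F^{\vee}$ by duals, tensor products and quotients). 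Tensoring by $L$ shifts all slopes by positive multiples of $\deg L$; hence for $\deg L$ large the minimal slopes of the bundles in question (such as $\Sym^2 F\otimes F^{\vee}=\Sym^2 E\otimes E^{\vee}\otimes L$) exceed $2g_Y-2$, their $H^1$ vanish, and so does the obstruction. This is uniform over $L\in\Pic^{b-e}(Y)$, with threshold depending only on $Y$ and $E$ --- and it is exactly the positivity input already driving \autoref{thm:main}. Combined with the reduction, $M(F)$ is smooth at $\phi$ of codimension $h^1(\End E)$.

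I expect the main obstacle to be the precise form of that last step: identifying the coherent sheaf whose $H^1$ controls $\coker d\tau_\phi$, and verifying its higher cohomology vanishes for $\deg L$ large --- the delicate point being that this sheaf is only a subquotient of positive bundles, so one must argue, e.g., that the relevant pieces are in fact quotients, or exploit Serre duality, to conclude vanishing; this is essentially the same computation that underlies \autoref{thm:main}, so it can be extracted from that argument. A secondary subtlety is that $[F]$ may be a badly unstable, hence very singular or even absent, point of the coarse space $M_{d-1,b}(Y)$, so the codimension count above must be run with deformations of $F$ as an abstract bundle (equivalently, inside the stack $\operatorname{Bun}_{d-1,b}(Y)$), which is smooth because $\Ext^2_{\O_Y}(F,F)=0$ on a curve.
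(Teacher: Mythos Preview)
Your overall strategy---find a single $\phi\in M(F)$ at which the locus is smooth of the expected codimension, and reduce this to surjectivity of $d\tau_\phi\from\Def(\phi/Y)\to\Ext^1(F,F)$---is sound. But the crucial step, establishing that surjectivity, is left as a gap you yourself flag. You try to identify $\coker d\tau_\phi$ with the obstruction to smoothness of the scheme $\mathcal Z$ of algebra structures on the fixed module $\O_Y\oplus F^\vee$, then argue this obstruction is ``$H^1$ of a coherent sheaf on $Y$ assembled functorially from $F$'' that becomes positive after twisting. The trouble is that you never name this sheaf, and the route via associativity equations does not hand you one: the obstruction at a particular multiplication $m$ depends on $m$, not just on $F$, and your own caveat about subquotients of positive bundles shows you have not closed the argument.

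The paper avoids this entirely by a direct geometric identification. By \autoref{lem:can}, giving an algebra structure on $\O_Y\oplus F^\vee$ is the same, up to the affine group $A=\Aut(\Tot F/Y)$, as giving a closed embedding $X\subset\Tot(F)$ in fiberwise affine general position. Hence $M(F)$ is the quotient $H^{\rm sm}/A$, where $H^{\rm sm}$ is the relevant open piece of the Hilbert scheme of $\Tot(F)$; the obstruction you are looking for is therefore nothing other than $H^1(X,N_{X/\Tot(F)})$. The real content of \autoref{thm:E} (and its engine, \autoref{prop:key}) is precisely the existence, for every $L$ of large degree, of a smooth $X\subset\Tot(F)$ with $H^1(N_{X/\Tot(F)})=0$. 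Granting that, a direct Riemann--Roch count gives
\[
\dim_{[\phi]}M(F)=\chi(N_{X/\Tot(F)})-\dim A=2b-h^1(\End E),
\]
which is the expected codimension. So the missing ingredient in your argument is exactly the normal-bundle vanishing that the paper produces; once you see $\mathcal Z$ as a Hilbert scheme, there is no need to pass through symmetric powers or associativity at all.

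One further correction: your parenthetical description of how \autoref{thm:main} is proved---``deforms the square-zero $\O_Y$-algebra $\O_Y\oplus F^\vee$, keeping the underlying module fixed, to a smooth cover''---is not what the paper does. The square-zero algebra appears in the introduction only as the trivial non-smooth example. The actual construction starts from a \emph{nodal} cover built by pinching (\autoref{prop:pinching}), attaches many vertical rational normal curves to force positivity of the normal bundle, and then smooths inside $\Tot(E')$; this is the substance of \autoref{prop:key}.
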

\autoref{cor:expectdim} is \autoref{thm:maroni} in the main text.
Going further, it would be valuable to know whether all the components of $M(E \otimes L)$ are of the expected dimension or, even better, if $M(E \otimes L)$ is irreducible.
The results of \cite[\S~2]{deo.pat:15} imply irreducibility for $Y = \P^1$ and some vector bundles $E$.
But the question remains open in general.

More broadly, the association $\phi \leadsto E_\phi$ allows us to relate $H_{d,g}(Y)$ to the moduli space of vector bundles on $Y$.
Denote by $M_{r,k}(Y)$ the moduli space of semi-stable vector bundles of rank $r$ and degree $k$ on $Y$.
It is well-known that $M_{r,k}(Y)$ is an irreducible algebraic variety \cite{ses:82}.
Note that the Tschirnhausen bundle of a degree $d$ and genus $g$ cover of $Y$ has rank $d-1$ and degree $b = g-1-d(g_Y-1)$.
One would like to say that $\phi \leadsto E_\phi$ yields a rational map
\[ H_{d,g}(Y) \dashrightarrow M_{d-1,b},\]
but to say so we must know the basic fact that a general element $\phi \from X \to Y$ of $H_{d,g}(Y)$ gives a semi-stable vector bundle $E_\phi$.
We obtain this as a consequence of our methods.
\begin{theorem}
  \label{thm:generalstable}
  Suppose $g_{Y} \geq 2$, and set $b = g-1-d(g_Y-1)$.
  If $g$ is sufficiently large (depending on $Y$ and $d$), then the Tschirnhausen bundle of a general degree $d$ and genus $g$ branched cover of $Y$ is stable.
  Moreover, the rational map $H_{d,g}(Y) \dashrightarrow M_{d-1,b}(Y)$ defined by $\phi \mapsto E_\phi$ is dominant. 

  The same statement holds for $g_{Y} = 1$, with ``stable'' replaced with ``regular poly-stable.''
\end{theorem}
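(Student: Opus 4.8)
The plan is to feed general vector bundles into \autoref{thm:main} in order to produce, for $g$ large, enough smooth connected covers with stable Tschirnhausen bundle that the Tschirnhausen map is visibly dominant, and then to deduce the statement about a general cover formally.

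First set up the map and reduce. The assignment $\phi\mapsto E_\phi$ varies algebraically in families (working over the stack of such covers, or after a suitable \'etale base change of $H_{d,g}(Y)$) and the formation of $E_\phi$ commutes with base change, since $\phi$ is finite flat and $d$ is invertible; it therefore defines a morphism from the open locus $U\subseteq H_{d,g}(Y)$ on which $E_\phi$ is semistable to $M_{d-1,b}(Y)$, hence a rational map $\tau\from H_{d,g}(Y)\dashrightarrow M_{d-1,b}(Y)$ (the locus $U$ is nonempty, even dense, by what follows). Since $H_{d,g}(Y)$ is irreducible and the stable locus $M^{s}\subseteq M_{d-1,b}(Y)$ (when $g_Y\ge 2$), respectively the regular poly-stable locus (when $g_Y=1$), is a dense open subset of the irreducible variety $M_{d-1,b}(Y)$, it suffices to show $\tau$ is dominant: then $\tau^{-1}(M^{s})$ is dense open in $H_{d,g}(Y)$, which is exactly the assertion that a general degree $d$, genus $g$ cover of $Y$ has stable (resp.\ regular poly-stable) Tschirnhausen bundle.

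To prove dominance I would realise the whole locus $M^{s}$ inside the image of $\tau$. A cover supplied by \autoref{thm:main} with $E_\phi=E\otimes L$ has genus $g_X=\deg(E\otimes L)+1+d(g_Y-1)$ by Riemann--Roch, so, given the target $b=g-1-d(g_Y-1)$ and an auxiliary integer $\ell$, I would apply \autoref{thm:main} to bundles $E$ of rank $d-1$ and degree $b':=b-(d-1)\ell$ and twist back up by a \emph{fixed} line bundle $L_0$ of degree $\ell$. The key input, read off from the proof of \autoref{thm:main}, is that its threshold $n$ can be taken to depend on $E$ only through numerical data, so that it is uniformly bounded on the bounded family of semistable bundles of type $(d-1,b')$, by a bound $n_0(b')$ of size $\tfrac{|b'|}{d-1}+O(1)$ (one only needs $E\otimes L$ sufficiently positive in a sense controlled by its slope). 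Hence for $b$, equivalently $g$, sufficiently large one can fix $\ell$ with $n_0(b-(d-1)\ell)\le\ell$ --- this is the role of the largeness hypothesis --- and then \autoref{thm:main} applies to \emph{every} stable $E$ of type $(d-1,b')$ with the single line bundle $L_0$, producing a smooth connected cover $\phi$ of genus $g$ with $E_\phi=E\otimes L_0$. Since $-\otimes L_0$ is an isomorphism $M^{s}_{d-1,b'}(Y)\xrightarrow{\sim}M^{s}_{d-1,b}(Y)$, the image of $\tau$ contains all of $M^{s}$. (When $g_Y=1$ run the same argument starting from regular poly-stable $E$, so the image contains the regular poly-stable locus.) It remains to check these covers are primitive, hence really lie in $H_{d,g}(Y)$: if $\phi$ factored as $X\xrightarrow{\pi}Y'\xrightarrow{\psi}Y$ with $\psi$ \'etale of degree $e>1$, then $\psi_*\O_{Y'}=\O_Y\oplus W$ with $W$ of rank $e-1$ and degree $0$ by Riemann--Hurwitz, and pushing $\pi_*\O_X=\O_{Y'}\oplus E_\pi^{\vee}$ forward exhibits $W$ as a direct summand of $E_\phi^{\vee}$; but $b>0$ for $g$ large, so $E_\phi^{\vee}$ is (poly-)stable of negative slope and can have no positive-rank summand of slope $0$ --- a contradiction. (For $g_Y=1$ replace ``\'etale cover'' by ``isogeny''.)

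The step I expect to be the main obstacle is precisely this uniformity --- and mild growth --- of the threshold $n(E)$ of \autoref{thm:main} over the moduli of semistable bundles of a fixed numerical type: the bare statement gives $n$ only one bundle at a time, while dominance needs a dense open subset of $M_{d-1,b}(Y)$ hit all at once, and checking that the auxiliary twist $\ell$ can outrun the threshold exactly when $g$ is large forces one to revisit the construction in the proof of \autoref{thm:main} and run it in families over the moduli of bundles. An alternative to sweeping out $M^{s}$ would be infinitesimal: produce one cover $\phi_0$ with $E_{\phi_0}$ general stable (again via \autoref{thm:main}), and show that the differential of $\tau$ at $[\phi_0]$ --- a map from first-order deformations of the $\O_Y$-algebra $\phi_{0*}\O_{X_0}$ to $\Ext^1_{\O_Y}(E_{\phi_0},E_{\phi_0})$ --- is surjective; this reduces to the vanishing of certain $\Ext$ groups, which again holds because the bundles in sight are very positive once $g$ is large.
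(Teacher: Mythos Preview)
Your main route --- applying \autoref{thm:main} with a single twist $L_0$ to \emph{every} stable $E$ of a fixed numerical type --- has the gap you yourself flag: the threshold $n(E)$ in \autoref{thm:main} is produced one bundle at a time, and nothing in its proof gives the uniform bound you assert. Tracing the argument, $n$ depends on a particular nodal cover $W\to Y$ built from a split degeneration $E_0$ of $E$ (via \autoref{prop:isotrivial} and \autoref{prop:pinching}), and different $E$'s yield different $E_0$'s and different $W$'s; making $n$ depend only on the numerical type would require either showing that all $E$ of a fixed type degenerate to a common $E_0$, or redoing the estimates in \autoref{prop:key} in families. Neither is carried out here, so as written the proposal does not close.

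The paper takes your ``alternative'' infinitesimal route, but with a specific mechanism you do not name. The refined form of the main theorem (\autoref{thm:E}, via \autoref{prop:key}) produces not just a smooth cover $\phi\from X\to Y$ with $E_\phi=E\otimes L$, but one satisfying $H^1(X,N_{X/E\otimes L})=0$ for the canonical affine embedding $X\subset\Tot(E\otimes L)$. This vanishing is the engine: starting from any \emph{one} such cover, one lets the bundle vary in a versal family $\mathcal E/S$ (or first in a one-parameter family towards a stable bundle), and the relative Hilbert scheme of $\Tot(\mathcal E)/S$ is smooth over $S$ at $[X\subset E\otimes L]$ because the obstruction space $H^1(N_{X/E\otimes L})$ vanishes. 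Nearby fibres $\mathcal E_t$ therefore also contain smooth curves $X_t$, and by \autoref{lem:can} these have Tschirnhausen bundle $\mathcal E_t$. This simultaneously produces a cover with stable bundle and shows dominance, without any uniformity in $n$. Your sketch of the alternative (``surjectivity of the differential reduces to vanishing of certain $\Ext$ groups'') is morally this, but the concrete object is $H^1(N_{X/E})$, and its vanishing is already part of the construction rather than something to be checked afterwards.

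Incidentally, your primitivity argument --- a stable $E_\phi^\vee$ of negative slope has no rank-$(e{-}1)$ degree-$0$ summand coming from an intermediate \'etale cover --- is different from the paper's (which uses $\pi_1$-surjectivity under specialisation from the attached-rational-curves model) and is a clean alternative once stability of $E_\phi$ is already in hand.
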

\autoref{thm:generalstable} is \autoref{thm:stability} in the main text.

The low degree cases ($d \leq 5$) of \autoref{thm:generalstable} were proved by Kanev \cite{kan:05,kan:04,kan:13} using the structure theorems.
The crucial new ingredient in our approach is the use of deformation theory to circumvent such direct attacks.
The validity of \autoref{thm:generalstable} for low $g$ is an interesting open problem.
It would be nice to know whether $\phi \mapsto E_\phi$ is dominant as soon as we have $\dim H_{d,g}(Y) \geq \dim M_{d-1,b}(Y)$.

We also draw the reader's attention to results, similar in spirit to \autoref{thm:generalstable}, proved by Beauville, Narasimhan, and Ramanan \cite{bea.nar.ram:89}.
Motivated by the study of the Hitchin fibration, they study not the pushforward of $\O_X$ itself but the pushforwards of general line bundles on $X$.

\subsection{Strategy of proof}
The proof of \autoref{thm:main} proceeds by degeneration.
To help the reader, we first outline our approach to a weaker version of \autoref{thm:main}.
In the weaker version, we consider not the vector bundle $E$ itself, but its projectivization $\P E$, which we call the \emph{Tschirnhausen scroll}.
A branched cover with Gorenstein fibers $\phi \from X \to Y$ with Tschirnhausen bundle $E$ factors through a {\sl relative canonical embedding} $\iota \from X \hookrightarrow \P E$ by the main theorem in \cite{cas:96}.
\begin{theorem}\label{thm:weak}
  Let $E$ be any vector bundle on a smooth projective curve $Y$.
  Then the scroll $\P E$ is the Tschirnhausen scroll of a finite cover $\phi \from X \to Y$ with $X$ smooth.
\end{theorem}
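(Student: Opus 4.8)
The plan is to build $X$ directly inside $\P E$ as the smooth support of an explicit complex of the type furnished by the structure theory of Casnati and Ekedahl, obtained by degenerating to a transparent model and deforming. Since $\P(E\otimes L)\cong\P E$ for every line bundle $L$, we are free to replace $E$ by an arbitrarily positive twist, and do so: assume throughout that $E$ is \emph{very positive}, meaning $\O_{\P E}(1)$ is relatively very ample and every auxiliary sheaf on $Y$ appearing below (symmetric powers, sheaf-Homs of the terms of the complex, and their direct images along $\pi\from\P E\to Y$) is globally generated with vanishing $H^{1}$. The cases $d=1$ (take $X=Y$) and $d=2$ (then $\P E=Y$, and any double cover branched along a reduced member of a sufficiently positive linear system works) are immediate, so assume $d\ge 3$.

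Recall from \cite{cas:96,cas.eke:96} that a finite flat Gorenstein cover $\phi\from X\to Y$ of degree $d$ with Tschirnhausen bundle $E$ sits in $\P E$ through its relative canonical embedding $\iota\from X\hookrightarrow\P E$, with $\iota^{*}\O_{\P E}(1)\cong\omega_{X/Y}$, and that $\iota_{*}\O_{X}$ admits a locally free resolution of length $d-2$,
\[
0\to\pi^{*}\det E\otimes\O_{\P E}(-d)\to\pi^{*}N_{d-3}\otimes\O_{\P E}(2-d)\to\cdots\to\pi^{*}N_{1}\otimes\O_{\P E}(-2)\to\O_{\P E}\to\iota_{*}\O_{X}\to 0 ,
\]
(the intermediate terms being absent when $d=3$, where $X$ is simply a member of $|\O_{\P E}(3)\otimes\pi^{*}\det E|$), in which $N_{i}$ is a vector bundle on $Y$ of rank the $i$-th Betti number of a general configuration of $d$ points in $\P^{d-2}$, and which is self-dual up to the twist by $\O_{\P E}(-d)\otimes\pi^{*}\det E$. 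Conversely, any complex of this numerical type whose cokernel is the structure sheaf of a subscheme $X\subset\P E$ finite and flat of degree $d$ over $Y$ realizes $X\to Y$ as a Gorenstein cover with Tschirnhausen scroll $\P E$: pushing the resolution forward along $\pi$ — the intermediate terms involving twists $\O_{\P E}(-j)$ with $2\le j\le d-2$, which have vanishing cohomology on the fibers $\P^{d-2}$, and $R^{d-2}\pi_{*}$ of the last term being $E^{\vee}$ — gives $\phi_{*}\O_{X}\cong\O_{Y}\oplus E^{\vee}$. So it suffices to produce such a complex whose cokernel-support is a \emph{smooth} curve; it is automatically connected, since $h^{0}(\O_{X})=1+h^{0}(E^{\vee})=1$.

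The construction proceeds by degeneration. One first writes down an explicit degree-$d$ Gorenstein cover $\phi_{0}\from X_{0}\to Y$ whose associated complex is as split as possible — for instance a relative complete intersection in a relative rational normal scroll, or a cover whose fibers are all isomorphic to a fixed (possibly singular or non-reduced) Gorenstein length-$d$ scheme — choosing the bundles $N_{i}$ and the twisting line bundle to be direct sums of very positive line bundles subject only to the prescribed ranks and self-duality. One then deforms the differentials $\varphi_{i}$ of the complex, equivalently the $\O_{Y}$-algebra structure on $\O_{Y}\oplus E^{\vee}$; the admissible tuples $(\varphi_{i})$ form an affine $\k$-scheme cut out by the quadratic relations $\varphi_{i}\circ\varphi_{i+1}=0$. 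The crucial point is that, because $E$ is very positive, the obstruction spaces governing these deformations — certain $H^{1}$'s and cokernels of maps between direct images of the sheaf-Homs among the terms of the complex — vanish, so that $X_{0}$ deforms freely inside the fixed scroll $\P E$; this positivity is precisely the substitute for the structure theorems that are unavailable once $d\ge 6$. A general deformation has all its fibers equal to $d$ reduced points in linearly general position, hence is a generically étale Gorenstein cover, and by the previous paragraph its Tschirnhausen scroll is $\P E$.

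It remains to arrange that the general member $X$ be smooth, and here is where I expect the main difficulty to lie. Because $X$ is not a complete intersection in $\P E$, one cannot invoke Bertini directly; instead one must show that the universal curve over the parameter space of complexes is smooth over a dense open locus, by checking that varying the differentials $\varphi_{i}$ already sweeps out enough of $H^{0}(X,N_{X/\P E})$ to run a Kleiman--Bertini transversality argument, using that the local models ($d$ reduced points) are smooth and that the generic fiber is already reduced for a general choice. Making all of this precise — the existence of a suitable degenerate model with Tschirnhausen scroll $\P E$, the unobstructedness of its deformations, and the genericity of smoothness, in the range $d\ge 6$ where no structure theorem is available — is the heart of the matter, and it is where the quantitative positivity bound (the integer $n$ of \autoref{thm:main}) gets pinned down. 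For $d\le 5$ one can bypass it using the explicit structure theorems of \cite{cas.eke:96,cas:96}: after twisting $E$ to be very positive, a triple cover is a smooth member of $|\O_{\P E}(3)\otimes\pi^{*}\det E|$, a quadruple cover is a relative complete intersection of two conic bundles, and a quintuple cover is the relative Pfaffian locus of a generic $5\times 5$ skew map of conic bundles — a smooth $X$ being produced in each case by ordinary Bertini.
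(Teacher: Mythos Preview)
Your proposal is a strategy sketch, not a proof, and you say so yourself: for $d\ge 6$ you defer the construction of the initial model $X_0\subset\P E$, the vanishing of the obstruction groups, and the generic smoothness of deformations, calling these ``the heart of the matter.'' That is exactly right, and none of it is done. A more specific problem is your sentence ``choosing the bundles $N_i$ and the twisting line bundle to be direct sums of very positive line bundles subject only to the prescribed ranks and self-duality'': the $N_i$ in a Casnati--Ekedahl resolution are not parameters you get to choose freely --- they are determined by the cover --- so you cannot begin by fixing them and then hope a cover exists with those invariants. Likewise the parameter space of tuples $(\varphi_i)$ with $\varphi_i\varphi_{i+1}=0$ contains many non-exact complexes and complexes whose cokernel is not finite flat of degree $d$; without an actual $X_0$ to start from, you have no foothold from which to argue that the good locus is non-empty, let alone that its general member is smooth. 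The absence of structure theorems for $d\ge 6$ is precisely why this direct algebraic attack stalls.

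The paper takes a completely different, geometric route that sidesteps the Casnati--Ekedahl resolution altogether. It (i) degenerates $E$ isotrivially to a split bundle $E_0=L_1\oplus\dots\oplus L_{d-1}$ with widely separated degrees; (ii) constructs by hand a \emph{nodal} cover $X_0\to Y$ with Tschirnhausen bundle $E_0$ by gluing $d$ copies of $Y$ (a ``pinching'' construction); (iii) attaches rational normal curves in general fibers of $\P E_0\to Y$ to $X_0$ to obtain a curve $Z$ whose normal bundle has $H^1=0$ --- this step is the technical core, and the paper works hard to show that attaching enough general rational curves kills $H^1(N_{Z/\P E_0})$; (iv) smooths $Z$ inside $\P E_0$ to a smooth $X'$ still satisfying $H^1(N_{X'/\P E_0})=0$; and finally (v) uses that vanishing to deform $X'\subset\P E_0$ along the isotrivial family back to $X\subset\P E$. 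The point is that the paper never needs to understand the $N_i$ or the space of Casnati--Ekedahl complexes: it produces the starting curve explicitly as a reducible nodal object, and the positivity needed for smoothing comes not from twisting $E$ but from the attached rational curves.
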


The following steps outline a proof of \autoref{thm:weak} which parallels the proof of the stronger \autoref{thm:main}.
We omit the details, since they are subsumed by the results in the paper.

\begin{enumerate}
\item First consider the case
  \[E = L_1 \oplus \dots \oplus L_{d-1},\]
  where the $L_i$ are line bundles on $Y$ whose degrees satisfy
  \[ \deg L_i \ll \deg L_{i+1}.\]
  For such $E$, we construct a nodal cover $\psi \from X \to Y$ such that $\P E_{\psi} = \P E$.
  For example, we may take $X$ to be a nodal union of $d$ copies of $Y$, each mapping isomorphically to $Y$ under $\psi$, where the $i$th copy meets the $(i+1)$th copy along nodes lying in the linear series $|L_i|$.
  
\item
  Consider $X \subset \P E$, where $X$ is the nodal curve constructed above.
  We now attempt to find a smoothing of $X$ in $\P E$.
  However, the normal bundle $N_{X/ \P E}$ may be quite negative.
  Fixing this negativity is the most crucial step.

  To overcome the negativity, we draw motivation from Mori's idea to deform maps from curves to projective varieties, which says that a map from a curve to a projective variety becomes more flexible after attaching sufficiently free rational tails.
  If we view a cover $X \to Y$ as a map from (a stacky modification of) $Y$ to the classifying stack $BS_d$ as done in \cite{abr.cor.vis:03}, then attaching flexible rational tails can be interpreted as attaching general rational normal curves to $X$ in the fibers of $\P E \to Y$.
  Of course, the classifying stack $BS_d$ is not a projective variety, so the above only serves as an inspiration.

\item
  Given a general point $y \in Y$, the $d$ points $\psi^{-1}(y) \subset \P E_{y} \simeq \P^{d-2}$ are in linear general position, and therefore they lie on many smooth rational normal curves $R_{y} \subset \P E_{y}$.
  Choose a large subset $S \subset Y$, and attach general rational normal curves $R_y$ for each $y \in S$ to $X$, obtaining a new nodal curve $Z \subset \P E$.

\item
  The key technical step is showing that the new normal bundle $N_{Z/\P E}$ is sufficiently positive.
  Using this positivity, we get that $Z$ is the flat limit of a family of smooth, relatively-canonically embedded curves $X_t \subset \P E$.
  The generic cover $\phi \from X_t \to Y$ in this family satisfies $E_\phi \cong L_1 \oplus \dots \oplus L_{d-1}$.

\item
  We tackle the case of an arbitrary bundle $E$ as follows.
  \begin{enumerate}
  \item We note that every vector bundle $E$ degenerates \emph{isotrivially} to a bundle of the form $E_0 = L_1 \oplus \dots \oplus L_{d-1}$ treated in the previous steps.
  \item We take a cover $X_0 \to Y$ with Tschirnhausen bundle $E_0$ constructed above.
    Using the abundant positivity of $N_{X_0/\P E_0}$, we show that $X_0 \subset \P E_0$ deforms to $X \subset \P E$.
    The cover $\phi \from X \to Y$ satisfies $E_\phi \cong  E$.
  \end{enumerate}
\end{enumerate}
We need to refine the strategy above to handle the vector bundle $E$ itself, and not just its projectivization.
Therefore, we work with the \emph{canonical affine embedding} of $X$ in the total space of $E$.
The proof of \autoref{thm:main} involves carrying out the steps outlined above for the embedding $X \subset E$ relative to the divisor of hyperplanes at infinity in a projective completion of $E$.

\subsection{Acknowledgements}  We thank Rob Lazarsfeld for asking us a question that motivated this paper.
This paper originated during the Classical Algebraic Geometry Oberwolfach Meeting in the summer of $2016$, where the authors had several useful conversations with Christian Bopp. We also benefited from conversations with Vassil Kanev and Gabriel Bujokas.
We thank an anonymous referee for catching a mistake in an earlier draft of this paper.

\subsection{Conventions}
We work over an algebraically closed field $\k$.
All schemes are of finite type over $\k$.
Unless specified otherwise, a point is a $\k$-point.
The projectivization $\P V$ of a vector bundle $V$ refers to the space of $1$-dimensional \emph{quotients} of $V$.
We identify vector bundles with their sheaves of sections.
An injection is understood as an injection of sheaves.

\section{Vector bundles,  their inflations, and degenerations}
This section contains some elementary results on vector bundles on curves.
Throughout, $Y$ is a smooth, projective, connected curve over $\k$, an algebraically closed field of arbitrary characteristic.

\subsection{Inflations}\label{sec:inf}
  Let $E$ be a vector bundle on $Y$.
  A \emph{degree $n$ inflation} of $E$ is a vector bundle $\widetilde E$ along with an injective map of sheaves $E \to \widetilde E$ whose cokernel is finite of length $n$.
  If the cokernel is supported on a subscheme $S \subset Y$, then we say that $E \to \widetilde E$ is an inflation of $E$ at $S$.
  
  \begin{remark}
    Let $E \to \widetilde E$ be a degree one inflation.
    In standard parlance, $E$ and $\widetilde E$ are said to be related by an \emph{elementary transformation}.
    We use ``inflation'' only to emphasize the asymmetry in the relationship.
  \end{remark}

Let $E \to \widetilde E$ be an inflation of degree $n$.
Then the dual bundle $\widetilde E^\vee$ is a sub-sheaf of $E^\vee$ and the quotient is finite of length $n$.
Thus, a degree $n$ inflation of $E$ is equivalent to a sub-sheaf of $E^\vee$ of co-length $n$, which in turn is equivalent to a quotient of $E^\vee$ of length $n$.
Therefore, we can identify the set of degree $n$ inflations of $E$ with the points of the quot scheme $\Quot(E^\vee, n)$.
It is easy to see that $\Quot(E^\vee, n)$ is smooth and connected, hence irreducible.
Therefore, it makes sense to talk about a ``general'' degree $n$ inflation of $E$.

We wish to study the effect of an inflation on cohomology.
\begin{proposition}\label{prop:inf_inf}
  Let $E \to \widetilde E$ be an inflation.
  Then $h^1(Y, \widetilde E) \leq h^1(Y, E)$.
  In particular, if $H^1(Y, E) = 0$, then $H^1(Y, \widetilde E) = 0$.
\end{proposition}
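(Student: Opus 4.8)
The plan is to exploit the defining short exact sequence of the inflation and the fact that $Y$ is a curve, so that the torsion cokernel has no higher cohomology. Concretely, writing $Q$ for the cokernel, we have a short exact sequence of sheaves
\[
  0 \to E \to \widetilde E \to Q \to 0,
\]
where $Q$ is finite of length $n$, hence supported on a $0$-dimensional closed subscheme of $Y$.

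The first step is to take the associated long exact sequence in cohomology:
\[
  \cdots \to H^1(Y, E) \to H^1(Y, \widetilde E) \to H^1(Y, Q) \to \cdots.
\]
The second step is the only substantive observation: since $Q$ is supported in dimension $0$, Grothendieck vanishing (or simply the fact that its cohomology can be computed on a finite affine subscheme) gives $H^1(Y, Q) = 0$. It then follows from exactness that the natural map $H^1(Y, E) \to H^1(Y, \widetilde E)$ is surjective, which yields $h^1(Y, \widetilde E) \le h^1(Y, E)$. The final clause of the proposition is then immediate: if $H^1(Y, E) = 0$, surjectivity of this map forces $H^1(Y, \widetilde E) = 0$.

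There is essentially no serious obstacle here; the entire content is the vanishing $H^1(Y,Q) = 0$, which is where the hypothesis that $Y$ is a curve (so that the finite-length cokernel is genuinely $0$-dimensional) is used. I would simply make sure to state that vanishing cleanly, perhaps remarking that $Q \cong \bigoplus_{y} Q_y$ as a skyscraper sheaf so that $H^i(Y,Q) = \bigoplus_y Q_y$ is concentrated in degree $0$.
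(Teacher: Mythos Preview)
Your proof is correct and follows essentially the same approach as the paper: take the long exact sequence in cohomology for $0 \to E \to \widetilde E \to \widetilde E/E \to 0$ and use that the cokernel has zero-dimensional support, so $H^1$ of it vanishes. The paper's proof is a one-liner to this effect; your version simply spells out the surjectivity conclusion more explicitly.
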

\begin{proof}
  Apply the long exact sequence on cohomology to $0 \to E \to \widetilde E \to \widetilde E/E \to 0$, and use that $\widetilde E/E$ has zero-dimensional support.
\end{proof}

Fix a point $y \in Y$.
Denote the fiber at $y$ by the subscript $y$.
Consider an inflation $E \to \widetilde E$ of degree $d$ whose cokernel is supported (scheme-theoretically) at $y$,
say
\begin{equation}\label{eq:inforiginal}
  0 \to E \to \widetilde E \to B \to 0,
\end{equation}
where the cokernel is annihilated by the maximal ideal $m_y \subset O_Y$.
Restricting this sequence to $y$, we get a sequence of vector spaces
\[ 0 \to A \to E|_y \to \widetilde E|_y \to B \to 0.\]
In this setting, we have a canonical isomorphism $A = B \otimes m_y/m_y^2$.
One way to see this is by dualizing the original sequence to obtain
\[A^\vee = \shExt^1(B, \O_Y) = B^\vee \otimes T_yY.\]
A more explicit description of the isomorphism is as follows.
Choose a uniformizer $t$ in $m_y$.
Given $s \in B$, choose a lift $\widetilde s$ in the stalk $\widetilde E_y$.
Then $t  \widetilde s$ is an element of the stalk $E_y$, whose evaluation at $y$ is in the kernel $A$ of $E|_y \to \widetilde E|_y$.
The isomorphism $B \otimes m_y / m_y^2 \to A$ sends $s \otimes t$ to the evaluation of $t \widetilde s \in A$.

The dual of \eqref{eq:inforiginal} is the sequence
\[ 0 \to \widetilde E ^\vee \to E^\vee \to A^\vee \to 0,\]
where the cokernel is supported at $y$.
Thus, the inflation $E \to \widetilde E$ is determined by the surjection
\begin{equation}\label{eq:degree1inf}
  E^\vee_y \to A^\vee = B^\vee \otimes T_yY.
\end{equation}
We call \eqref{eq:degree1inf} the \emph{defining quotient} of the inflation $E \to \widetilde E$.

Let $V \subset E^\vee \otimes \Omega_Y|_y$ be the image of the evaluation map
\[ H^0\left(E^\vee \otimes \Omega_Y\right) \to E^\vee \otimes \Omega_Y|_y.\]
Let $q \from E^\vee_y \to k^d$ be a surjection and denote by $E \to \widetilde E_q$ the degree $d$ inflation of $E$ at $y$ corresponding to $q$.
\begin{proposition}\label{prop:good_bad_inf}
  With the notation above, let $q_V \from V \to k^d \otimes \Omega_Y|_y$ be the restriction of $q \otimes \id$ to $V$.
  Then we have
  \begin{align*}
    h^0(Y, \widetilde E_q) &= h^0(Y, E) + d - \rk q_V, \text{ and }\\
    h^1(Y, \widetilde E_q) &= h^1(Y, E) - \rk q_V.
  \end{align*}
\end{proposition}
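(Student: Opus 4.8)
The plan is to dualize, twist by $\Omega_Y$, and read off the two formulas from a long exact sequence combined with Serre duality. Recall from the discussion preceding the proposition that $\widetilde E_q$ is defined by the exact sequence $0 \to \widetilde E_q^\vee \to E^\vee \to \k^{d} \to 0$ whose surjection $E^\vee \to \k^{d}$ is the composite of the fiber map $E^\vee \to E^\vee_y$ with $q$. As $\Omega_Y$ is a line bundle, tensoring preserves exactness and yields
\[ 0 \to \widetilde E_q^\vee \otimes \Omega_Y \to E^\vee \otimes \Omega_Y \xrightarrow{\pi} \k^{d} \otimes \Omega_Y|_y \to 0 . \]
The one local point to pin down is that $\pi$ is the composite of the evaluation map $E^\vee \otimes \Omega_Y \to E^\vee \otimes \Omega_Y|_y$ with $q \otimes \id$. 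This is checked at $y$: choosing a uniformizer $t$ at $y$ and a local frame of $E^\vee$ in which $q$ becomes the projection onto the first $d$ fiber coordinates, the inclusion $\widetilde E_q^\vee \hookrightarrow E^\vee$ becomes $\operatorname{diag}(t, \dots, t, 1, \dots, 1)$ with $d$ entries equal to $t$, and the factorization of the induced map on cokernels is then immediate.

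Granting this, I would take global sections. By the factorization, the image of $H^0(\pi)$ is $(q\otimes\id)(V)$, which is the image of $q_V$; hence $H^0(\pi)$ has rank $\rk q_V$. Since $\k^{d}\otimes\Omega_Y|_y$ is supported at a point, its $H^1$ vanishes, so the long exact cohomology sequence of the displayed sequence gives
\[ h^0(\widetilde E_q^\vee \otimes \Omega_Y) = h^0(E^\vee \otimes \Omega_Y) - \rk q_V , \qquad h^1(\widetilde E_q^\vee \otimes \Omega_Y) = h^1(E^\vee \otimes \Omega_Y) + d - \rk q_V . \]
Applying Serre duality on $Y$ in the form $h^0(F^\vee \otimes \Omega_Y) = h^1(F)$ and $h^1(F^\vee \otimes \Omega_Y) = h^0(F)$ for a vector bundle $F$, the first identity becomes $h^1(\widetilde E_q) = h^1(E) - \rk q_V$ and the second becomes $h^0(\widetilde E_q) = h^0(E) + d - \rk q_V$, which are the two asserted equalities.

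The only step carrying real content is the identification of $\pi$ --- equivalently, after dualizing back, of the connecting homomorphism $\k^{d} = H^0(\k^{d}) \to H^1(E)$ attached to the sequence $0 \to E \to \widetilde E_q \to \k^{d} \to 0$ --- with the map controlled by $q_V$; the remainder is routine manipulation of exact sequences. One could argue directly with that connecting map using Serre duality for the torsion sheaf $\k^{d}$, but then not all the sheaves in sight are locally free; twisting by $\Omega_Y$ at the outset sidesteps this while isolating exactly the same local computation.
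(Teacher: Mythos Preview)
Your proof is correct and follows exactly the approach sketched in the paper: tensor the defining sequence $0 \to \widetilde E_q^\vee \to E^\vee \to \k^d \to 0$ by $\Omega_Y$, take the long exact sequence in cohomology, and apply Serre duality. You have simply filled in the details the paper omits, in particular the identification of the image of $H^0(\pi)$ with the image of $q_V$.
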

\begin{proof}
  We have the exact sequence $0 \to \widetilde E_q^\vee \to E^\vee \xrightarrow{q} k^d \to 0$, where the cokernel is supported at $y$.
  Tensoring by $\Omega_Y$, taking the long exact sequence in cohomology, and using Serre duality yields the proposition.
\end{proof}

\begin{proposition}
  \label{prop:inf_1}
  Suppose $E$ is such that $h^1(Y, E) \neq 0$.
  Then, for a general degree 1 inflation $E \to \widetilde E$, we have
  \[ h^1(Y, \widetilde E) = h^1(Y, E) - 1 \text{ and } h^0(Y, \widetilde E) = h^0(Y, E).\]
\end{proposition}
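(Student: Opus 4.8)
The plan is to derive this from \autoref{prop:good_bad_inf} applied with $d = 1$, after which the whole statement reduces to a single genericity claim about a rank. A degree $1$ inflation of $E$ is a point of $\Quot(E^\vee,1)$, namely a point $y \in Y$ together with a surjection $q \from E^\vee_y \to \k$, and \autoref{prop:good_bad_inf} gives
\[ h^0(Y, \widetilde E_q) = h^0(Y, E) + 1 - \rk q_V, \qquad h^1(Y, \widetilde E_q) = h^1(Y, E) - \rk q_V, \]
where $q_V$ is the restriction of $q \otimes \id$ to $V = \operatorname{im}\bigl(H^0(E^\vee \otimes \Omega_Y) \to (E^\vee \otimes \Omega_Y)|_y\bigr)$. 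The target $\k \otimes \Omega_Y|_y$ is one-dimensional, so $\rk q_V \in \{0,1\}$ and it equals $1$ exactly when $q_V \neq 0$. Thus it suffices to show that $q_V \neq 0$ for a general degree $1$ inflation; then both displayed formulas give the asserted values.

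The first concrete step is to exhibit one degree $1$ inflation with $q_V \neq 0$. Since $h^1(Y, E) \neq 0$, Serre duality gives $H^0(Y, E^\vee \otimes \Omega_Y) \cong H^1(Y,E)^\vee \neq 0$, so there is a nonzero section $s$ of $E^\vee \otimes \Omega_Y$, whose zero locus is a proper (hence finite) closed subset of $Y$. For any $y$ outside this finite set, $s(y)$ is a nonzero element of $(E^\vee \otimes \Omega_Y)|_y$, so $V$ contains the nonzero vector $s(y)$; since the surjections $q \from E^\vee_y \to \k$ that kill the line spanned by $s(y)$ form a proper linear subspace of the fiber of $\Quot(E^\vee, 1)$ over $y$, a general such $q$ satisfies $q_V(s(y)) \neq 0$, i.e. $q_V \neq 0$. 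This is the only place the hypothesis $h^1(Y,E) \neq 0$ is used, and it is used precisely to guarantee $V \neq 0$.

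Finally I would upgrade this to a statement about a \emph{general} inflation. The composite $H^0(E^\vee \otimes \Omega_Y) \to (E^\vee \otimes \Omega_Y)|_y \xrightarrow{q \otimes \id} \Omega_Y|_y$ varies in an algebraic family over $\Quot(E^\vee,1)$ (the target being the fiber of the pullback of $\Omega_Y$), so the locus where it vanishes — equivalently, the locus $\{q_V = 0\}$ — is closed, and by the previous paragraph it is a proper closed subset. Since $\Quot(E^\vee,1)$ is irreducible (as recorded in \autoref{sec:inf}), the complement $\{q_V \neq 0\}$ is dense open, so a general degree $1$ inflation has $\rk q_V = 1$, and the displayed formulas yield $h^1(Y,\widetilde E) = h^1(Y,E) - 1$ and $h^0(Y,\widetilde E) = h^0(Y,E)$. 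There is no real obstacle here: once \autoref{prop:good_bad_inf} is in hand the argument is entirely formal, and the only point needing a little care is the routine openness/irreducibility bookkeeping in this last paragraph.
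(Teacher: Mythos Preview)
Your proof is correct and follows essentially the same approach as the paper: reduce to \autoref{prop:good_bad_inf} with $d=1$, use Serre duality to see that $h^1(E) \neq 0$ forces $V \neq 0$ at a general point $y$, and conclude that a general $q$ has $\rk q_V = 1$. Your version is more explicit about the openness and irreducibility bookkeeping, while the paper's proof leaves those routine steps implicit.
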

\begin{proof}
  If $h^1(E) = h^0(E^\vee \otimes \Omega_Y) \neq 0$, the space $V \subset E^\vee \otimes \Omega_Y|_y$ defined above is non-zero if $y \in Y$ is general.
  Then, for a general choice of $q \from E^\vee_q \to k$, we have $\rk q_V = 1$.
  The statement now follows from \autoref{prop:good_bad_inf}.
\end{proof}

We will need a slight strengthening of \autoref{prop:inf_1}.
Suppose $y \in Y$ is such that the image $V$ of the evaluation map
\[H^0(E^\vee \otimes \Omega_Y) \to E^\vee\otimes \Omega_Y|_y\]
is non-zero.
Suppose we have a set $S$ of surjections $E^\vee_y \to k^d$.
A surjection $q \from E^\vee_y \to k^d$ gives a $(d-1)$ dimensional linear subspace $\Lambda_q \subset \P E^\vee_y$.
\begin{proposition}\label{prop:lingen_1}
  Suppose the linear span of $\bigcup_{q \in S} \Lambda_q$ is the entire projective space $\P E^\vee_y$.
  Then for some $q \in S$ we have
  \[ h^1(Y, \widetilde E_q) \leq h^1(Y, E) - 1\]
\end{proposition}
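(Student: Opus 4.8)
The plan is to deduce the statement from \autoref{prop:good_bad_inf} by a short linear-algebra argument, the main point of which is to dualize the spanning hypothesis on the subspaces $\Lambda_q$.

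First I would reduce to a rank statement. By \autoref{prop:good_bad_inf}, for every surjection $q\from E^\vee_y\to\k^d$ one has $h^1(Y,\widetilde E_q)=h^1(Y,E)-\rk q_V$, so it suffices to produce some $q\in S$ with $\rk q_V\geq 1$. Since $q_V=(q\otimes\id)|_V$ and $\ker(q\otimes\id)=\ker q\otimes\Omega_Y|_y$, this amounts to finding $q\in S$ with $V\not\subseteq\ker q\otimes\Omega_Y|_y$. I would argue by contradiction: if $V\subseteq\ker q\otimes\Omega_Y|_y$ for every $q\in S$, then $V\subseteq\bigcap_{q\in S}\bigl(\ker q\otimes\Omega_Y|_y\bigr)=\bigl(\bigcap_{q\in S}\ker q\bigr)\otimes\Omega_Y|_y$.

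Next I would translate the hypothesis on $\bigcup_{q\in S}\Lambda_q$. Under the paper's convention, $\P E^\vee_y$ is the space of one-dimensional quotients of $E^\vee_y$, i.e. of hyperplanes in $E^\vee_y$, and the one-dimensional quotients of $E^\vee_y$ factoring through $q\from E^\vee_y\to\k^d$ are exactly the hyperplanes containing $\ker q$; thus $\Lambda_q$ is the set of hyperplanes of $E^\vee_y$ containing $\ker q$. A linear subspace of $\P E^\vee_y$ corresponds (inclusion-reversingly) to the linear subspace of $E^\vee_y$ equal to the common intersection of the hyperplanes it parametrizes, so the linear span of $\bigcup_{q\in S}\Lambda_q$ corresponds to $\bigcap_{q\in S}\ker q$. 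Hence the hypothesis that this span is all of $\P E^\vee_y$ says precisely that $\bigcap_{q\in S}\ker q=0$. Combined with the previous step this forces $V=0$, contradicting the standing assumption $V\neq 0$. Therefore some $q\in S$ has $\rk q_V\geq 1$, and for that $q$ we get $h^1(Y,\widetilde E_q)=h^1(Y,E)-\rk q_V\leq h^1(Y,E)-1$.

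I do not expect a real obstacle here: the only care required is the bookkeeping between $E^\vee_y$ and its dual imposed by the ``one-dimensional quotients'' convention for projectivization (so that ``linear span of the $\Lambda_q$'' dualizes to ``intersection of the $\ker q$''), together with the trivial remark that tensoring with the line $\Omega_Y|_y$ commutes with these intersections. Everything else is the one-line appeal to \autoref{prop:good_bad_inf}.
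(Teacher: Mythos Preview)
Your proof is correct and follows the same approach as the paper. The paper's proof is just two sentences---``By the spanning assumption, we must have $\rk q_V \geq 1$ for some $q \in S$. Then the statement follows from \autoref{prop:good_bad_inf}.''---and your argument is precisely the natural unpacking of the first sentence, with the duality bookkeeping (linear span of the $\Lambda_q$ corresponds to $\bigcap_{q\in S}\ker q$) carried out correctly under the paper's quotient convention for projectivization.
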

\begin{proof}
  By the spanning assumption, we must have $\rk q_V \geq 1$ for some $q \in S$.
  Then the statement follows from \autoref{prop:good_bad_inf}.
\end{proof}

Repeated applications of \autoref{prop:inf_1} yield the following important consequence.
\begin{corollary}
  \label{prop:inf_cohom}
  Let $n \geq h^1(Y, E)$ be a non-negative integer.
  Then a general degree $n$ inflation $E \to \widetilde E$ satisfies $H^1(Y, \widetilde E) = 0$.
\end{corollary}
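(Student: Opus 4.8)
The plan is to reduce the statement to constructing a single inflation with vanishing $H^1$, and then produce such an inflation by iterating \autoref{prop:inf_1}.

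First I would record that degree $n$ inflations of $E$ are parametrized by the irreducible scheme $\Quot(E^\vee, n)$, and that there is a universal inflation over it: on $Y \times \Quot(E^\vee, n)$ the kernel $\mathcal K$ of the universal quotient $\pi^{*} E^\vee \to \mathcal Q$ is flat over $\Quot(E^\vee, n)$ with fibers locally free on $Y$, and its dual $\mathcal K^\vee$ restricts on the fiber over a point $s$ to the inflation $\widetilde E_s$ corresponding to $s$. By semicontinuity of $h^1$ along the fibers of this family, the locus $\{s : H^1(Y, \widetilde E_s) = 0\}$ is open in $\Quot(E^\vee, n)$. Since the base is irreducible, it is then enough to exhibit a \emph{single} degree $n$ inflation $E \to \widetilde E$ with $H^1(Y, \widetilde E) = 0$; the general one will automatically have this property.

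To build such an inflation I would first note that composing a degree $a$ inflation with a degree $b$ inflation yields a degree $a+b$ inflation, since the finite cokernels fit into $0 \to E'/E \to E''/E \to E''/E' \to 0$ and lengths add. Set $h = h^1(Y, E)$, so $0 \le h \le n$. Choose a chain $E = E_0 \to E_1 \to \dots \to E_h$ in which each $E_i \to E_{i+1}$ is a general degree $1$ inflation; for $i < h$ we have $h^1(Y, E_i) = h - i > 0$, so \autoref{prop:inf_1} applies and gives $h^1(Y, E_{i+1}) = h^1(Y, E_i) - 1$, whence $h^1(Y, E_h) = 0$. Finally extend by any degree $n-h$ inflation $E_h \to \widetilde E$ (possible since $n - h \ge 0$); by \autoref{prop:inf_inf} we still have $H^1(Y, \widetilde E) = 0$. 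The composite $E \to \widetilde E$ is then a degree $n$ inflation with vanishing $H^1$, which completes the reduction.

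The one point that is more than formal manipulation — and the step I would write out most carefully — is the existence and flatness of the universal inflation, namely that $\mathcal K = \ker(\pi^{*} E^\vee \to \mathcal Q)$ is flat over $\Quot(E^\vee, n)$ with fibers locally free on $Y$, so that cohomology-and-base-change semicontinuity applies to $\mathcal K^\vee$. This follows from flatness of $\mathcal Q$ over the Quot scheme together with the long exact $\Tor$ sequence, and from torsion-freeness of subsheaves of vector bundles on the smooth curve $Y$; everything else is immediate from the propositions already established.
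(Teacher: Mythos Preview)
Your proposal is correct and follows essentially the same approach as the paper, which simply states that the corollary follows from ``repeated applications of \autoref{prop:inf_1}.'' You have made explicit the semicontinuity step (openness of the vanishing locus over the irreducible $\Quot(E^\vee,n)$) and the use of \autoref{prop:inf_inf} to pad from degree $h^1(Y,E)$ up to degree $n$, both of which the paper leaves implicit.
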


\autoref{prop:inf_inf} and \autoref{prop:inf_cohom} together imply the following.
\begin{corollary}\label{cor:gen_inf}
  Let $E$ be a vector bundle on $Y$ of rank $r$.
  For large enough $n$, any vector bundle $E'$ of rank $r$ that contains a general degree $n$ inflation of $E$ satisfies $H^1(Y, E') = 0$.
\end{corollary}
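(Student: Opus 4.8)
The plan is to deduce the statement directly by combining \autoref{prop:inf_inf} and \autoref{prop:inf_cohom}; the only real observation needed is that an inclusion of two bundles of the \emph{same} rank on a curve is automatically an inflation. So this corollary requires no new deformation-theoretic input beyond what has already been established.

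Concretely, I would take any $n \geq h^1(Y, E)$. By \autoref{prop:inf_cohom}, a general degree $n$ inflation $E \to \widetilde E$ satisfies $H^1(Y, \widetilde E) = 0$; fix such a $\widetilde E$, i.e.\ a point of $\Quot(E^\vee, n)$ outside the proper closed ``bad'' locus produced by that corollary. Now let $E'$ be an arbitrary vector bundle of rank $r$ containing this $\widetilde E$. Since $\widetilde E$ also has rank $r$, the coherent sheaf $E'/\widetilde E$ on $Y$ vanishes on a dense open set, hence is torsion, hence of finite length; thus the inclusion $\widetilde E \hookrightarrow E'$ is itself an inflation (of some degree $m \geq 0$). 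Applying \autoref{prop:inf_inf} to this inflation gives $h^1(Y, E') \leq h^1(Y, \widetilde E) = 0$, so $H^1(Y, E') = 0$, as claimed.

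The only step that takes a moment's thought is the equal-rank-implies-finite-length-cokernel point, and even that is immediate on a curve. If one prefers a dual formulation: every such $E'$ corresponds to a subsheaf of $E^\vee$ of finite colength contained in $\widetilde E^\vee$, and the vanishing $H^1 = 0$ is monotone along such chains of subsheaves by \autoref{prop:inf_inf}; so vanishing at $\widetilde E$ forces it at every $E'$ above it. There is thus no genuine obstacle here — the content of the corollary is entirely front-loaded into \autoref{prop:inf_inf} and \autoref{prop:inf_cohom}.
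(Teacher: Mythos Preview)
Your proposal is correct and matches the paper's own approach exactly: the paper simply states that \autoref{prop:inf_inf} and \autoref{prop:inf_cohom} together imply the corollary, and you have spelled out precisely how. The one small observation you supply---that an inclusion of rank $r$ bundles on a curve has finite-length cokernel and is therefore itself an inflation---is the only thing left implicit in the paper, and your justification of it is fine.
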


\subsection{Nodal curves and inflations of the normal bundle}
\label{sec:normal_inflation}
A common setting for inflations in the paper is the following.
Let $V$ be a  smooth variety.
Let $X$ and $R$ be curves in $V$ that intersect at a point $p$ so that their union $Z$ has a node at $p$.
In particular, $X$ and $R$ are smooth at $p$.
Also assume that $X$ and $R$ are local complete intersections elsewhere.

In local coordinates, the setup looks as follows.
Let $t_1, \dots, t_d$ be formal local coordinates around $p$ in $V$.
Let the curve $Z$ be cut out by $t_1t_2 = t_3 = \dots = t_d = 0$, the curve $X$ by $t_2 = t_3 = \dots = t_d = 0$, and the curve $R$ by $t_1 = t_3 = \dots = t_d = 0$.

In this situation, we get the exact sequence
\[ 0 \to I_{Z/V} \big|_X \to I_{X/V}\big|_X \to \Omega_R \big|_p \to 0,\]
where the map $I_{X/V}|_X \to \Omega_R|_p$ is induced by the composite of $d \from I_{X/V} \to \Omega_V|_X$ and the restriction $\Omega_V|_X \to \Omega_V|_p \to \Omega_R|_p$.
The dual sequence
\begin{equation}\label{eq:nodeinf}
  0 \to N_{X/V} \to N_{Z/V}\big|_X \to \shExt^1_{\O_X}\left(\Omega_R\big|_p, \O_X\right) \to 0,
\end{equation}
exhibits $N_{Z/V}|_X$ as a degree 1 inflation of $N_{X/V}$ at $p$.
The defining quotient of this degree 1 inflation
\begin{equation}\label{eq:nodaldefiningquotient}
  q \from I_{X/V}\big|_p \to \Omega_{R}\big|_p 
\end{equation}
is given by the composite of $d \from I_{X/V} \to \Omega_V$ and the restriction $\Omega_V|_p \to \Omega_R|_p$.
In terms of the local coordinates above, $q$ sends $t_2$  to $dt_2$ and $t_3, \dots, t_d$ to $0$.
If the image of $T_p R$ in $N_{X/V}|_p$ is a general one-dimensional subspace, then $N_{Z/V}|_X$ is a general degree 1 inflation of $N_{X/V}$ at $p$.

Observe that we have natural identifications
\begin{align*}
  \shExt_{\O_X}^1\left(\Omega_R\big|_p, \O_X\right) &= \shExt_{\O_X}^1\left(\k_p, \O_X\right) \otimes N_{p/R} \\
  &= N_{p/X} \otimes N_{p/R}.
\end{align*}
Using this identification, we can write the sequence \eqref{eq:nodeinf} and its analogue on $R$ together as 
\begin{equation}
  \label{eq:inftogether}
  \begin{tikzcd}
    0 \arrow{r}& N_{X/V} \arrow{r}& N_{Z/V}|_X \arrow{r}{a}& N_{p/X} \otimes N_{p/R} \arrow{r}& 0\\
        0 \arrow{r}& N_{R/V} \arrow{r}& N_{Z/V}|_R \arrow{r}{b}& N_{p/X} \otimes N_{p/R} \arrow{r}\arrow[equal]{u}& 0.\\
  \end{tikzcd}
\end{equation}
The two maps $a$ and $b$ are compatible in the sense that they both factor through a common map
\begin{equation}\label{eqn:normalmap}
  N_{Z/V}|_p \to N_{p/X} \otimes N_{p/R}.
\end{equation}

In local coordinates, the map above can be described as follows.
We have the $k$-vector spaces
\begin{align*}
  N^\vee_{Z/V}|_p &= I_{Z/V}/I_{Z/V}^2|_p = \langle  t_1t_2, t_3, \dots, t_d \rangle\\
  N^\vee_{p/R}|_p &= I_{p/R}/I_{p/R}^2|_p = \langle  t_1 \rangle, \text{ and }\\
  N^\vee_{p/X}|_p &= I_{p/X}/I_{p/X}^2|_p = \langle  t_2 \rangle.
\end{align*}
The map in \eqref{eqn:normalmap} is dual to the map
\[ t_1 \otimes t_2 \mapsto t_1t_2. \]

Finally, note that the discussion above extends naturally to the case of two smooth curves attached nodally at a finite set of points instead of a single point.

\subsection{Isotrivial degenerations}
We say that a bundle $E$ \emph{isotrivially degenerates} to a bundle $E_0$ if there exists a pointed smooth curve $(\Delta, 0)$ and a bundle ${\mathcal E}$ on $Y \times \Delta$ such that ${\mathcal E}_{Y \times \{0\}} \cong E_0$ and ${\mathcal E}\big|_{Y \times \{t\}} \cong E$ for every $t \in \Delta \setminus \{0\}$.

\begin{proposition}
  \label{prop:isotrivial}
  Let $E$ be a vector bundle on $Y$, and let $N$ be a non-negative integer.
  Then $E$ isotrivially degenerates to a vector bundle $E_0$ of the form
  \[ E_0 = L_1 \oplus \dots \oplus L_r,\]
  where the $L_i$ are line bundles and $\deg L_i + N \leq \deg L_{i+1}$ for all $i = 1, \dots, r-1$.
\end{proposition}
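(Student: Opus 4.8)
The plan is to exhibit $E_0$ as the associated graded bundle of a filtration of $E$ by sub-bundles whose successive quotients are line bundles with degrees spaced at least $N$ apart, and then to invoke the standard degeneration of a filtered bundle to its associated graded. If $\rk E = 1$ there is nothing to prove, so assume $r := \rk E \ge 2$ and write $e := \deg E$.

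First I would fix target degrees. Choose integers $d_1 \le \dots \le d_{r-1}$ with $d_{i+1} - d_i = N$ and $d_{r-1}$ negative enough that (i) $d_i < \mu_{\min}(E) - 2g_Y + 1$ for all $i \le r-1$, and (ii) $d_r := e - (d_1 + \dots + d_{r-1}) \ge d_{r-1} + N$; since $d_r \to +\infty$ as $d_{r-1}\to -\infty$, both hold for $d_{r-1} \ll 0$, and then $d_i + N \le d_{i+1}$ for every $i = 1, \dots, r-1$. Next I would construct a filtration $0 = F_0 \subset F_1 \subset \dots \subset F_{r-1} \subset F_r = E$ by sub-bundles with $\deg(F_i/F_{i-1}) = d_i$, inductively. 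Given $F_{i-1}$ with $i \le r-1$, the quotient $W_i := E/F_{i-1}$ is a bundle of rank $r-i+1 \ge 2$, and since a quotient bundle of $W_i$ is again a quotient bundle of $E$, we have $\mu_{\min}(W_i) \ge \mu_{\min}(E)$. Hence for any line bundle $L_i$ of degree $d_i$, the bundle $L_i^\vee \otimes W_i$ has $\mu_{\min} > 2g_Y - 1$ by (i), so $H^1(Y, L_i^\vee \otimes W_i \otimes \O(-z)) = 0$ for all $z \in Y$ and $L_i^\vee \otimes W_i$ is globally generated. A globally generated bundle of rank $\ge 2$ on a curve has a nowhere-vanishing general section (the pairs (section, zero) form a family of dimension one less than $H^0$), which furnishes an inclusion $L_i \hookrightarrow W_i$ with locally free cokernel; let $F_i \subset E$ be its preimage in $E$. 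After $r-1$ steps, $F_{r-1}$ has corank one and $L_r := E/F_{r-1}$ is a line bundle of degree $d_r$.

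Putting $E_0 := \bigoplus_{i=1}^r L_i$, the degree conditions hold by construction. It remains to observe that $E$ isotrivially degenerates to $E_0 = \operatorname{gr}(F_\bullet)$: the Rees sheaf of the filtration $F_\bullet$ is locally free on $Y \times \A^1$ (the filtration splits Zariski-locally on $Y$), flat over $\A^1$, and restricts to $E$ over each $t \ne 0$ and to $E_0$ over $t = 0$, so one may take $\Delta = \A^1$. (Alternatively, one can peel off one step of the flag at a time, using that an extension of vector bundles isotrivially degenerates to the split bundle, and compose the resulting degenerations.) The crux is the construction of the filtration, and in particular the remark that quotient bundles of $E$ have $\mu_{\min} \ge \mu_{\min}(E)$: this is what lets us extract line sub-bundles of exactly the prescribed very negative degrees — not merely of their saturations — all the way down the flag. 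The global-generation estimate via Serre duality, the dimension count for a nowhere-vanishing section, and the Rees degeneration are all routine.
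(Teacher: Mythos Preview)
Your proof is correct and follows essentially the same approach as the paper: build a full flag of $E$ whose successive line-bundle quotients have the prescribed degree gaps, then invoke the Rees construction to isotrivially degenerate $E$ to the associated graded. The only substantive difference is that you justify in detail the existence of saturated line sub-bundles of a given very negative degree (via global generation of $L_i^\vee \otimes W_i$ and the observation $\mu_{\min}(E/F_{i-1}) \ge \mu_{\min}(E)$), whereas the paper simply asserts ``let $L_{r-1} \subset E$ be a line bundle with $\deg L_{r-1} \le -N$ and with a locally free quotient'' and iterates; your argument is exactly what is needed to make that assertion rigorous.
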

For the proof of \autoref{prop:isotrivial}, we need a lemma.
\begin{lemma}
  \label{lem:filtration}
  There exists a filtration
  \[ E = F_{0} \supset F_{1} \supset \dots \supset F_{r-1} \supset F_{r} = 0,\]
  satisfying the following properties.
  \begin{enumerate}
  \item\label{filt1} For every $i \in \{0, \dots, r-1\}$, the sub-quotient $F_{i}/F_{i+1}$ is a line bundle.
  \item\label{filt2} Set $L_i = F_{i}/F_{i+1}$ for $i \in \{1, \dots, r-1\}$ and $L_r = F_0/F_1$.
    For every $i \in \{1, \dots, r-1\}$, we have
    \[\deg L_i + N \leq \deg L_{i+1}.\]
  \end{enumerate}
\end{lemma}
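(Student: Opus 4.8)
The plan is to build the filtration one step at a time, from the bottom up, choosing each successive sub-line-bundle to be as negative as possible relative to the quotient it sits inside. Concretely, I would argue by induction on the rank $r$. The base case $r = 1$ is trivial (take $F_0 = E$, $F_1 = 0$). For the inductive step, the key point is the following: given any vector bundle $G$ of rank $s \geq 2$ on $Y$ and any integer $m$, there exists a line sub-bundle $M \subset G$ (saturated, so that $G/M$ is locally free) with $\deg M \leq m$. Indeed, one can twist down by a sufficiently negative line bundle $A$ so that $G \otimes A$ has a nonzero section — equivalently, $\Hom(A^\vee, G) \neq 0$ — and the saturation of the image of such a section is a line sub-bundle $M$ with $\deg M \geq \deg A^\vee$, but more importantly one can also bound $\deg M$ from above: any nonzero map $A^\vee \to G$ whose image has saturation $M$ forces $h^0(M \otimes A) \neq 0$, hence $\deg M \geq -\deg A$; to get an \emph{upper} bound instead, note that a line sub-bundle of minimal degree exists (degrees of sub-line-bundles are bounded above by slope considerations: a saturated $M \subset G$ gives an injection $M \hookrightarrow G$, so $\deg M \leq \mu_{\max}(G)$, which is finite), and conversely by twisting we can always \emph{lower} the degree — if $M$ is any line sub-bundle and $p \in Y$, then $M(-p) \subset M \subset G$ is again a line sub-bundle of degree one less, but it is no longer saturated; its saturation has degree between $\deg M - 1$ and $\deg M$. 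Iterating, we obtain saturated line sub-bundles of arbitrarily negative degree. This is the mechanism I would use.

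With that tool in hand, the inductive construction runs as follows. Suppose the lemma holds for bundles of rank $< r$. Given $E$ of rank $r$, first choose a saturated line sub-bundle $F_{r-1} = L_{r-1} \subset E$ of sufficiently negative degree — negative enough that, whatever the line bundles $L_1, \dots, L_{r-2}, L_r$ turn out to be in the quotient $E/F_{r-1}$, we will have $\deg L_{r-2} + N \leq \deg L_{r-1}$... wait — that is backwards: $L_{r-1}$ must be the \emph{largest} of $L_1, \dots, L_{r-1}$, so I should instead peel off the sub-line-bundles in the opposite order. Let me restate: I build $F_1 \subset E$ first. Choose $F_1 \subset E$ a saturated subbundle of corank $1$, i.e.\ $E/F_1 = L_r$ is a line bundle and $F_1$ has rank $r-1$; there is no constraint relating $L_r$ to the others, so any such $F_1$ works. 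Now within $F_1$, which has rank $r-1$, I want $F_1 = F_1 \supset F_2 \supset \cdots \supset F_r = 0$ with $F_i/F_{i+1} = L_i$ line bundles for $i = 1, \dots, r-1$ and the gap condition $\deg L_i + N \leq \deg L_{i+1}$. Apply the tool: choose $F_{r-1} \subset F_{r-2} \subset \cdots \subset F_1$ by successively selecting, inside each $F_{i}$, a saturated line sub-bundle $F_{?}$... the cleanest bookkeeping is to pick $L_{r-1} = F_{r-1}$ last, as a corank-$(r-2)$ statement. I would instead do it top down within $F_1$: pick $F_2 \subset F_1$ saturated of corank $1$ so that $L_1 = F_1/F_2$ has degree $\leq \deg(F_1) - (\text{whatever is needed})$; since $L_1$ only needs $\deg L_1 + N \leq \deg L_2$, and $L_2$ will be a quotient of $F_2$ whose degree we do not yet control, the honest approach is bottom up: choose $F_{r-1} = L_{r-1} \subset F_1$ a saturated line sub-bundle of very negative degree (possible by the tool), small enough that $\deg L_{r-2} + N \leq \deg L_{r-1}$ will hold — but again $L_{r-2}$ is not yet chosen.

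The resolution: choose the successive quotients to be very negative rather than the sub-bundles. Dualize. A filtration of $E$ with line-bundle sub-quotients is the same as a filtration of $E^\vee$ with line-bundle sub-quotients, in reverse order with dual line bundles. So it suffices to find a filtration $E^\vee = G_0 \supset G_1 \supset \cdots \supset G_{r-1} \supset G_r = 0$ with $G_i/G_{i+1}$ line bundles and, writing $M_i$ for these, the gap condition becomes $\deg M_{i+1} + N \leq \deg M_i$ for the appropriate range — i.e.\ we want the quotients to \emph{drop} in degree by at least $N$ as we go down, which is exactly a "destabilizing-type" filtration with prescribed large gaps. Now build \emph{this} one bottom-up: pick $G_{r-1} \subset E^\vee$ to be a saturated line sub-bundle of extremely negative degree $\delta_{r-1}$ (possible, as shown); having chosen $G_{r-1} \subset \cdots \subset G_{j+1}$, note $G_{j+1}$ has rank $r - j - 1$ and we need $G_j \supset G_{j+1}$ with $G_j / G_{j+1}$ a line bundle of degree $\delta_j \geq \delta_{j+1} + N$ — equivalently a corank-$(r-j-1)$ saturated subbundle $G_j \subset E^\vee$ containing $G_{j+1}$, with $\deg G_j = \deg G_{j+1} + \delta_j$; such a $G_j$ exists for $\delta_j$ sufficiently large (by the tool applied to $E^\vee / G_{j+1}$: it has a saturated corank-$1$ subbundle whose degree can be made as close to $\mu_{\max}$ as we like, i.e.\ as large as needed, and pulling back gives $G_j$). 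Proceeding up to $G_0 = E^\vee$, the gaps are all $\geq N$ by construction. Translating back to $E$ by dualizing gives the filtration in the statement, with $L_i = M_{r-i}^\vee$ suitably reindexed so that $L_r = F_0/F_1$ is the first quotient (which carries no constraint) and $L_1, \dots, L_{r-1}$ satisfy $\deg L_i + N \leq \deg L_{i+1}$.

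The main obstacle is purely bookkeeping: keeping straight which direction the degrees must move and matching the slightly unusual indexing in the statement (where $L_r = F_0/F_1$ sits at the \emph{top} of the filtration, outside the chain of inequalities $\deg L_1 + N \leq \cdots \leq \deg L_{r-1} + N(r-1) \leq \cdots$). The only genuine mathematical input is the elementary fact that a vector bundle on a smooth projective curve admits saturated sub-line-bundles (equivalently, corank-one subbundles) of arbitrarily negative (resp.\ appropriately large) degree, which follows from Riemann–Roch after twisting, together with saturation to maintain local freeness of quotients. No cohomology vanishing or positivity is needed here; this lemma is the soft input that feeds into Proposition~\ref{prop:isotrivial}, where the actual isotrivial degeneration — realized by letting the extension classes of the successive sub-quotients degenerate to zero over $\Delta$ — does the real work.
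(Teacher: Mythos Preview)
Your instinct---build the filtration step by step using the fact that a vector bundle on a curve admits saturated line sub-bundles of arbitrarily \emph{negative} degree---is exactly right, and it is what the paper does. But the execution has two genuine gaps.

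First, the dualization step reverses the direction of freedom you actually have, and your argument then invokes the wrong direction. You write that $E^\vee/G_{j+1}$ ``has a saturated corank-$1$ subbundle whose degree can be made as close to $\mu_{\max}$ as we like, i.e.\ as large as needed.'' This is false: line sub-bundles of a fixed vector bundle $V$ on a curve have degree bounded \emph{above} (by $\mu_{\max}(V)$, say), not below. Your own ``tool'' gives sub-line-bundles of arbitrarily \emph{small} degree; it does not give arbitrarily large ones. After dualizing you need the degrees $\delta_j$ to \emph{increase} with each step, and you have no mechanism to force that. The dualization is a red herring: undo it and work directly with $E$, where the tool points in the correct direction.

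Second, you assert that $L_r = F_0/F_1$ ``carries no constraint,'' but it does: the case $i = r-1$ of the hypothesis reads $\deg L_{r-1} + N \leq \deg L_r$. Since $F_0 = E$ is forced, $L_r$ is determined by $F_1$ and you cannot choose its degree freely. Your argument never addresses this.

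The paper handles both issues cleanly without dualizing. First twist $E$ by a line bundle so that $\deg E \geq 0$ (harmless, since a filtration of $E \otimes L$ untwists to one of $E$). Then build from the bottom: choose $F_{r-1} \subset E$ a saturated line sub-bundle with $\deg L_{r-1} \leq -N$; then choose a saturated line sub-bundle $L_{r-2} \subset E/F_{r-1}$ with $\deg L_{r-2} \leq \deg L_{r-1} - N$ and let $F_{r-2}$ be its preimage; and so on down to $F_1$. Each step uses only the existence of line sub-bundles of arbitrarily negative degree. The final constraint $\deg L_{r-1} + N \leq \deg L_r$ then comes for free: every $L_1, \dots, L_{r-1}$ has negative degree by construction, while $\deg E = \sum \deg L_i \geq 0$, so $\deg L_r \geq 0 \geq \deg L_{r-1} + N$. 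That degree bookkeeping trick is the one idea your proposal is missing.
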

\begin{proof}
  The statement is vacuous for $r = 0$ and $1$.
  So assume $r \geq 2$.
  Note that if $F_{\bullet}$ is a filtration of $E$ satisfying the two conditions, and if $L$ is a line bundle, then $F_\bullet \otimes L$ is such a filtration of $E \otimes L$.
  Therefore, by twisting by a line bundle of large degree if necessary, we may assume that $\deg E \geq 0$.

  Let us construct the filtration from right to left.
  Let $L_{r-1} \subset E$ be a line bundle with $\deg L_{r-1} \leq -N$ and with a locally free quotient.
  Set $F_{r-1} = L_{r-1}$.
  Next, let $L_{r-2} \subset E / F_{r-1}$ be a line bundle with $\deg L_{r-2} \leq \deg L_{r-1} - N$ and with a locally free quotient.
  Let $F_{r-2} \subset E$ be the preimage of $L_{r-2}$.
  Continue in this way.
  More precisely, suppose that we have constructed
  \[ F_j \supset F_{j+1} \supset \dots \supset F_{r-1} \supset F_r = 0\]
  such that $L_i = F_i / F_{i+1}$ satisfy
  \[ \deg L_{i} \leq \deg L_{i+1} - N,\]
  and suppose $j \geq 2$.
  Then let $L_{j-1} \subset E/F_j$ be a line bundle with $\deg L_{j-1} \leq \deg L_j - N$ with a locally free quotient.
  Let $F_{j-1} \subset E$ be the preimage of $L_{j-1}$.
  Finally, set $F_0 = E$.

  Condition~\ref{filt1} is true by design.
  Condition~\ref{filt2} is true by design for $i \in \{1, \dots, r-2\}$.
  For $i = r-1$, note that $\deg L_{r-1} \leq -N$ by construction.
  On the other hand, we must have $\deg L_r \geq 0$.
  Indeed, we have $\deg E \geq 0$ but every sub-quotient of $F_\bullet$ except $F_0 / F_1$ has negative degree.
  Therefore, condition~\ref{filt2} holds for $i = r-1$ as well.  
\end{proof}
\begin{proof}[Proof of \autoref{prop:isotrivial}]
  Let $F_\bullet$ be a filtration of $E$ satisfying the conclusions of \autoref{lem:filtration}.
  It is standard that a coherent sheaf degenerates isotrivially to the associated graded sheaf of its filtration.
  The construction goes as follows.
  Consider the $\O_Y[t]$-module
  \[ \bigoplus_{n \in \Z} t^{-n} F_n,\]
  where $F_n = 0$ for $n > r$ and $F_n = E$ for $n < 0$.
  The corresponding sheaf $\mathcal E$ on $Y \times \A^1$ is coherent, $\k[t]$-flat, satisfies $\mathcal E_{Y \times \{t\}} \cong E$ for $t \neq 0$, and $\mathcal E_{Y \times \{0\}} \cong L_1 \oplus \dots \oplus L_r$.
\end{proof}

\subsection{The canonical affine embedding}
We end the section with a basic construction that relates finite covers and their Tschirnhausen bundles.
Let $d$ be a positive integer and assume that $\charac \k = 0$ or $\charac \k > d$.

Let $X$ be a curve of arithmetic genus $g_X$; let $\phi \from X \to Y$ be a finite flat morphism of degree $d$; and let $E$ be the associated Tschirnhausen bundle.
Then we have a decomposition $\phi_* \O_X = \O_Y \oplus E^\vee$.
The map $E^\vee \to \phi_* \O_X$ induces a surjection $\Sym^* E^\vee \to \phi_* \O_X$.
Taking the relative spectrum gives an embedding of $X$ in the total space $\Tot(E)$ of the vector bundle associated to $E$; we often denote $\Tot(E)$ by $E$ if no confusion is likely.
We call $X \subset E$ the \emph{canonical affine embedding}.
Note that the degree of $E$ is half of degree of the branch divisor of $\phi$, namely
\[ \deg E = g_X - 1 - d(g_Y-1).\]
For all $y \in Y$, the subscheme $X_y \subset E_y$ is in affine general position (not contained in a translate of a strict linear subspace of $E_y$).

The canonical affine embedding is characterized by the properties above.
\begin{proposition}\label{lem:can}
  Retain the notation above.
  Let $F$ be a vector bundle on $Y$ of the same rank and degree as $E$, and let $\iota \from X \to F$ be an embedding over $Y$ such that for a general $y \in Y$, the scheme $\iota(X_y) \subset F_y \cong \A^{d-1}$ is in affine general position.
  Then we have $F \cong E$, and up to an affine linear automorphism of $F / Y$, the embedding $\iota$ is the canonical affine embedding.
\end{proposition}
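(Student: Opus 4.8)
The plan is to translate the statement into the language of $\O_Y$-algebra homomorphisms and then compare degree-one generators. I would start from the standard dictionary: since $\Tot(F) = \spec_Y \Sym^\bullet F^\vee$ and $\phi$ is affine, a closed embedding $\iota \from X \hookrightarrow \Tot(F)$ over $Y$ is the same datum as a surjection of $\O_Y$-algebras $\psi \from \Sym^\bullet F^\vee \twoheadrightarrow \phi_* \O_X$, and $\psi$ is determined by its degree-one component, an $\O_Y$-linear map $\mu \from F^\vee \to \phi_* \O_X = \O_Y \oplus E^\vee$. Write $\mu = (\nu, \alpha)$ with $\nu \from F^\vee \to \O_Y$ and $\alpha \from F^\vee \to E^\vee$. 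Over a point $y$, the map $\mu_y$ sends a linear functional $\ell$ on $F_y$ to its restriction $\ell|_{\iota(X_y)} \in H^0(X_y, \O_{X_y}) = (\phi_* \O_X)_y$, and $\alpha_y$ is its further image in $H^0(X_y, \O_{X_y})/\k\cdot 1 = E^\vee_y$.

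Next I would unwind the affine-general-position hypothesis. The length-$d$ subscheme $\iota(X_y) \subset F_y$ lies in a translate of a proper linear subspace exactly when some nonzero $\ell \in F_y^\vee$ restricts to a constant on $\iota(X_y)$, that is, $\mu_y(\ell) \in \k \cdot 1$; since $\dim F^\vee_y = d - 1 = \dim E^\vee_y$, affine general position at $y$ is therefore equivalent to $\alpha_y \from F^\vee_y \to E^\vee_y$ being an isomorphism. Thus $\alpha$ is a homomorphism of vector bundles on the curve $Y$ which is an isomorphism over a dense open subset; it is then injective with torsion cokernel, and since $\deg F^\vee = -\deg F = -\deg E = \deg E^\vee$ and the ranks agree, the cokernel has length zero. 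Hence $\alpha \from F^\vee \xrightarrow{\ \sim\ } E^\vee$ is an isomorphism, and dualizing yields $F \cong E$.

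Finally I would use $\alpha$ to identify $F^\vee$ with $E^\vee$, hence $\Sym^\bullet F^\vee$ with $\Sym^\bullet E^\vee$; under this identification $\psi$ becomes a surjection $\psi' \from \Sym^\bullet E^\vee \twoheadrightarrow \phi_* \O_X$ whose degree-one component has the form $(\nu', \id_{E^\vee})$ for some $\nu' \from E^\vee \to \O_Y$, i.e. $\nu' \in H^0(Y, E)$. The canonical affine embedding corresponds to the algebra map $\psi_{\mathrm{can}}$ whose degree-one component is the inclusion $E^\vee \hookrightarrow \O_Y \oplus E^\vee = \phi_* \O_X$. Letting $\tau$ be the translation of $\Tot(E)$ over $Y$ by the section $\nu'$, its action on functions sends $\ell \in E^\vee$ to $\ell + \nu'(\ell) \in E^\vee \oplus \O_Y$, so $\psi_{\mathrm{can}} \circ \tau^{\#}$ and $\psi'$ agree on the degree-one generators and therefore coincide. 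Taking relative $\spec$ turns this into the identity $\iota = \tau \circ \iota_{\mathrm{can}}$ (after the identification $F \cong E$), which is exactly the assertion: up to the affine-linear automorphism $\tau$ of $\Tot(E)/Y$, in fact up to a translation, $\iota$ is the canonical affine embedding.

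The step I expect to require the most care is the second one: one must read off affine general position as a condition on $\alpha$, the component of $\mu$ landing in $E^\vee$, rather than on $\mu$ itself, and keep careful track of the distinguished $\O_Y$-summand of $\phi_* \O_X$. The appearance of a mere translation (rather than a general affine map) at the end reflects that the linear ambiguity has already been spent in identifying $F$ with $E$; the other steps are formal consequences of the total-space/algebra dictionary.
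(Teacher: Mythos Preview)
Your argument is correct and follows the same route as the paper: extract the degree-one piece $F^\vee \to \O_Y \oplus E^\vee$ of the algebra map, use affine general position to see that its $E^\vee$-component is generically an isomorphism and hence (by equality of rank and degree) an isomorphism, then kill the $\O_Y$-component by a translation. The only cosmetic difference is that the paper translates first in $\Tot(F)$ and then invokes the isomorphism $F^\vee \xrightarrow{\sim} E^\vee$, whereas you use that isomorphism first and translate in $\Tot(E)$ afterwards; as you note, these describe the same affine-linear automorphism.
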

\begin{proof}
  The restriction map $\Sym^* F^\vee \to \phi_*\O_X = \O_Y \oplus E^\vee$ induces a map
  \[ \lambda \from F^\vee \to E^\vee.\]
  Since a general fiber $X_y \subset F_y$ is in affine general position, the map $\lambda$ is an injective map of sheaves.
  But the source and the target are locally free of the same degree and rank.
  Therefore, $\lambda$ is an isomorphism.

  Recall that the affine canonical embedding is induced by the map
  \[(0, \id) \from E^\vee \to \O_Y \oplus E^\vee = \phi_* \O_X.\]
  Suppose $\iota$ induces the map
  \[(\alpha, \lambda) \from F^\vee \to \O_Y \oplus E^\vee.\]
  Compose $\iota$ with the affine linear isomorphism of $T_\alpha \from \Tot(F) \to \Tot(F)$ over $Y$ defined by the map $\Sym^*F^\vee \to \Sym^*F^\vee$ induced by
  \[ (-\alpha, \id) \from F^\vee \to \O_Y \oplus F^\vee.\]
  Then $T_\alpha \circ \iota \from X \to F$ is the affine canonical embedding, as desired.  
\end{proof}

\section{Proof of the main theorem}
Let $d$ be a positive integer, and assume that $\charac \k = 0$ or $\charac \k > d$.
Throughout, $Y$ is a smooth, projective, connected curve over $\k$.

\subsection{The split case with singular covers}
\label{sec:split}
As a first step, we treat the case of a suitable direct sum of line bundles and allow the source curve $X$ to be singular.
\begin{proposition}\label{prop:pinching}
  Let $E = L_1 \oplus \dots \oplus L_{d-1}$, where the $L_i$ are line bundles on $Y$ with $\deg L_1 \geq 2g_Y-1$ and $\deg L_{i+1} \geq \deg L_i + (2g_Y-1)$ for $i \in \{1, \dots, d-2\}$.
  There exists a nodal curve $X$ and a finite flat map $\phi \from X \to Y$ of degree $d$ such that $E_\phi \cong E$.
\end{proposition}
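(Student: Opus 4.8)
The plan is to realize $X$ as a nodal chain of $d$ copies of $Y$ and then simply read off $\phi_*\O_X$. First I would choose, for each $i \in \{1,\dots,d-1\}$, a reduced effective divisor $D_i \subset Y$ with $\O_Y(D_i) \cong L_i$, arranged so that the $D_i$ are pairwise disjoint; the degree hypotheses guarantee that each $|L_i|$ contains reduced members in enough abundance that this can be done (pick the $D_i$ one at a time, each avoiding the finitely many points of the previously chosen ones). Now form $X = Y_1 \cup \dots \cup Y_d$ from the disjoint union $\bigsqcup_{j} Y_j$ of copies of $Y$ by gluing, for each $i$, the subscheme $D_i \subset Y_i$ to the subscheme $D_i \subset Y_{i+1}$ (both identified with $D_i \subset Y$ via the canonical isomorphisms $Y_j \cong Y$); let $\phi \from X \to Y$ be the morphism restricting to the identification $Y_j \cong Y$ on each component. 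Gluing along reduced divisors produces nodes, and since the $D_i$ are pairwise disjoint every glued point of $X$ lies on exactly two components; hence $X$ is a nodal curve. The map $\phi$ is finite, being finite on each component.

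Next I would compute $\phi_*\O_X$. Applying $\phi_*$ to the normalization sequence $0 \to \O_X \to \bigoplus_{j}\O_{Y_j} \to \bigoplus_{i=1}^{d-1}\O_{D_i} \to 0$ of $X$ — whose last term is supported on the nodes — and using that $\phi$ is finite (hence affine), one obtains the exact sequence
\[
  0 \to \phi_*\O_X \to \bigoplus_{j=1}^{d}\O_Y \xrightarrow{\ \delta\ } \bigoplus_{i=1}^{d-1}\O_{D_i} \to 0,
  \qquad
  \delta(f_1,\dots,f_d) = \bigl(f_i|_{D_i} - f_{i+1}|_{D_i}\bigr)_{i=1}^{d-1},
\]
where $\delta$ is surjective because the $D_i$ are pairwise disjoint. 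The key manoeuvre is a unimodular change of basis on $\O_Y^{\oplus d}$: put $g_0 = f_1$ and $g_i = f_{i+1} - f_i$ for $1 \le i \le d-1$ (an integral, unipotent substitution). In these coordinates $\delta$ becomes $(g_0,\dots,g_{d-1}) \mapsto (-\,g_i|_{D_i})_{i=1}^{d-1}$, which decouples into one condition per coordinate, so that
\[
  \phi_*\O_X = \ker\delta = \O_Y\,g_0 \ \oplus\ \bigoplus_{i=1}^{d-1}\O_Y(-D_i)\,g_i \ \cong\ \O_Y \oplus \bigoplus_{i=1}^{d-1}L_i^{\vee} = \O_Y \oplus E^{\vee}.
\]
In particular $\phi_*\O_X$ is locally free of rank $d$, so $\phi$ is flat of degree $d$. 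Finally, the summand $\O_Y\,g_0$ is precisely the ``diagonal'' $\{f_1 = \dots = f_d\}$, i.e.\ the image of the pull-back-of-functions map $\O_Y \to \phi_*\O_X$; the complementary trace-zero summand is therefore $\bigoplus_{i} L_i^{\vee} = E^{\vee}$, and hence $E_\phi \cong E$, as required.

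The only step demanding genuine care is the first one: producing reduced, pairwise disjoint divisors $D_i$ with $\O_Y(D_i) \cong L_i$ (for which the degree bounds on the $L_i$ are used) and checking that the glued curve is at worst nodal — the latter being exactly what pairwise disjointness of the $D_i$ ensures, since a shared point would create a point of $X$ lying on three or more branches. The computation of $\phi_*\O_X$ itself presents no obstacle; the one point worth flagging is that the naive generators $f_1,\dots,f_d$ of $\O_Y^{\oplus d}$ do not exhibit $\phi_*\O_X$ as a direct sum of line bundles, whereas the ``successive differences'' basis $g_0, g_1,\dots,g_{d-1}$ does.
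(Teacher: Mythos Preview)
Your proof is correct and constructs the same chain-of-$Y$'s curve that the paper's inductive argument produces, but you compute $E_\phi$ by a different and more direct route. The paper proceeds by induction on $d$: it attaches one copy of $Y$ at a time via a ``pinching'' construction, proves a lemma giving a short exact sequence $0 \to E_\psi \to E_\phi \to \O_Y(D) \to 0$ at each step, and then invokes the degree hypotheses to force $\Ext^1(L_1, L_i) = 0$ so that this sequence splits. You instead build the whole chain at once and read off $\phi_*\O_X$ from the normalization sequence via the unipotent change of basis $g_i = f_{i+1}-f_i$. A pleasant consequence of your approach is that the splitting $E_\phi \cong \bigoplus L_i$ is visible \emph{automatically}, without any $\Ext$-vanishing; the degree inequalities enter only in the first step, to manufacture the reduced, pairwise disjoint $D_i$. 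The paper's approach, on the other hand, isolates the pinching lemma as a reusable tool (it applies to attaching $Y$ to an arbitrary cover $Z$, not just a chain), at the cost of needing the $\Ext^1$ vanishing to conclude.

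One small terminological slip: your summand $\bigoplus_i L_i^\vee \subset \phi_*\O_X$ is \emph{a} complement to the diagonal $\O_Y$, but it is not literally the trace-zero summand (in your coordinates the trace is $dg_0 + (d-1)g_1 + \dots + g_{d-1}$). This does not affect the argument, since any complement to the image of $\O_Y \hookrightarrow \phi_*\O_X$ is isomorphic to the cokernel $E_\phi^\vee$; just drop the phrase ``trace-zero''. Also note that the existence of reduced disjoint $D_i$ needs $\deg L_i \geq 1$; when $g_Y = 0$ the stated hypothesis $\deg L_1 \geq 2g_Y - 1 = -1$ does not guarantee this, but the paper's argument has the same gap in that edge case.
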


The proof is inductive, based on the following ``pinching'' construction.
Let $\psi \from Z \to Y$ be a finite cover of degree $r$.
Let $X$ be the reducible nodal curve $Z \cup Y$, where $Z$ and $Y$ are attached nodally at distinct points (see \autoref{fig:pinching}).
More explicitly, let $y_i \in Y$ and $z_i \in Z$ be points such that $\psi(z_i) = y_i$.
Define $R$ as the kernel of the map
\[ \psi_* \O_Z \oplus \O_Y \to \bigoplus_i \k_{y_i},\]
defined around $y_i$ by
\[ (f,g) \mapsto f(z_i) - g(y_i).\]
Then $R \subset \psi_* \O_Z \oplus \O_Y$ is an $\O_Y$-subalgebra and $X := \spec_Y R$ is a nodal curve.
Let $\phi \from X \to Y$ be the natural finite flat map.
Set $D = \sum y_i$.

\begin{figure}[ht]
  \begin{tikzpicture}
    \draw
    node at (0,0) {\includegraphics{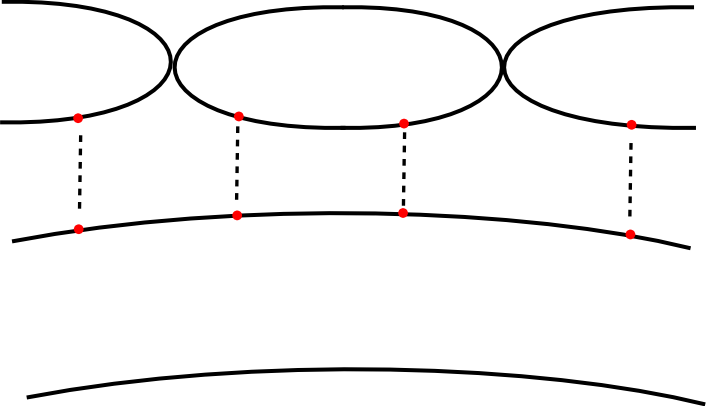}}
    (3.25,1.15) node {$Z$}
    (3.25,-0.4) node {$Y$}
    (3.25,-1.6) node {$Y$}
    ;
    \draw (0,-.5) edge [->] (0,-1);
  \end{tikzpicture}
  \caption{The pinching construction, in which pairs of points indicated by dotted lines are identified to form nodes.}
  \label{fig:pinching}
\end{figure}

\begin{lemma}\label{lem:pinching}
  In the setup above, we have an exact sequence
  \[ 0 \to E_\psi \to E_\phi \to \O_Y(D) \to 0.\]
\end{lemma}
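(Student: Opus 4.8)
The plan is to compute $E_\phi$ directly from the description $\phi_*\O_X = R \subset \psi_*\O_Z \oplus \O_Y$. First I would set up the two relevant decompositions: $\psi_*\O_Z = \O_Y \oplus E_\psi^\vee$ and $\phi_*\O_X = \O_Y \oplus E_\phi^\vee$, where in each case the $\O_Y$-summand is the image of pull-back of functions, split by $\tfrac1r$ (resp. $\tfrac1d$) times the trace. The key observation is that the defining surjection $\psi_*\O_Z \oplus \O_Y \to \bigoplus_i \k_{y_i}$ sending $(f,g) \mapsto f(z_i) - g(y_i)$ kills the ``diagonal'' copy of $\O_Y$ (since $f$ and $g$ both come from the same function on $Y$, they agree at $y_i$), so it factors through a map $E_\psi^\vee \oplus \O_Y \to \bigoplus_i \k_{y_i}$, and modulo the diagonal $\O_Y$ we get $R/\O_Y \cong \ker\bigl(E_\psi^\vee \oplus \O_Y \to \bigoplus_i \k_{y_i}\bigr)$, i.e. $E_\phi^\vee$ is this kernel. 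Concretely, the map on $\O_Y$ is the evaluation $\O_Y \to \bigoplus_i \k_{y_i}$, which is surjective with kernel $\O_Y(-D)$, and — provided the $z_i$ are chosen so the composite $E_\psi^\vee \to \bigoplus \k_{y_i}$ does not interfere, which one arranges by the distinctness/generality hypothesis — one reads off a short exact sequence $0 \to E_\psi^\vee \to E_\phi^\vee \to \O_Y(-D) \to 0$. Dualizing gives $0 \to \O_Y(D) \to E_\phi \to E_\psi$, which is almost the claim but with the terms in the wrong order; the actual statement $0 \to E_\psi \to E_\phi \to \O_Y(D) \to 0$ instead suggests that one should extract the sub-bundle differently.

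A cleaner route, which I would pursue instead, is to work with the ideal/conductor description. The nodal curve $X = Z \sqcup_{\{z_i = y_i\}} Y$ sits in a partial normalization sequence: write $\nu\from \widetilde X = Z \amalg Y \to X$ for the normalization at the $n$ nodes. Then $\phi_*\O_X \hookrightarrow \phi_*\nu_*\O_{\widetilde X} = \psi_*\O_Z \oplus \O_Y$ with cokernel $\bigoplus_i \k_{y_i}$, which is just the sequence defining $R$. Now I would instead use the \emph{other} natural sub-sheaf: there is an inclusion $\O_Y \hookrightarrow \phi_*\O_X$ (pull-back from the $Y$-component? no — pull-back from $Y$), and I want to identify the quotient. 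Actually the right move is: the ideal sheaf $\mathcal I \subset \phi_*\O_X$ of the closed subscheme $Z \subset X$ (i.e. functions vanishing on $Z$) is an $\O_Y$-module, supported set-theoretically everywhere, and pushforward of the structure sequence $0 \to \mathcal I_{Y \subset X} \to \O_X \to \O_Z \to 0$ gives $0 \to \phi_*\mathcal I_{Y\subset X} \to \phi_*\O_X \to \psi_*\O_Z \to 0$; but $\phi_*\mathcal I_{Y \subset X}$ is a line bundle on $Y$ — it is the ideal of $Y$ inside $X$, which near each node is the ideal of one branch, hence locally principal — and its degree I can compute, getting $\O_Y(-D)$ up to the relevant twist. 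Splitting off $\O_Y$ from both $\phi_*\O_X$ and $\psi_*\O_Z$ via traces (being careful that the trace map is compatible, which holds away from the nodes and hence everywhere by continuity) converts this into $0 \to \O_Y(-D) \to E_\phi^\vee \to E_\psi^\vee \to 0$, and dualizing — using that these are vector bundles on a smooth curve, so dualizing is exact — yields exactly $0 \to E_\psi \to E_\phi \to \O_Y(D) \to 0$.

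The main obstacle, and the step I would spend the most care on, is the bookkeeping around the trace splittings: one must check that the canonical $\O_Y \hookrightarrow \phi_*\O_X \to \O_Y$ and $\O_Y \hookrightarrow \psi_*\O_Z \to \O_Y$ splittings are genuinely compatible with the maps in the partial-normalization sequence, so that the sequence descends to the Tschirnhausen summands without introducing spurious terms. Away from the $n$ nodes everything is étale-local and the compatibility is immediate; at a node the fiber of $X$ over $y_i$ is $\psi^{-1}(y_i)$ with the point $z_i$ identified to an extra point, and a direct local computation with the trace (using $\charac\k = 0$ or $>d$ so that $1/d$ and $1/r$ make sense) settles it. I would also double-check the degree count: $\deg E_\phi = g_X - 1 - d(g_Y-1)$ and $\deg E_\psi = g_Z - 1 - r(g_Y-1)$, and since $X$ is obtained from $Z \amalg Y$ by gluing at $n = \deg D$ pairs of points, $g_X = g_Z + g_Y + n - 1$ and $d = r + 1$, which gives $\deg E_\phi = \deg E_\psi + \deg D$, consistent with the claimed exact sequence. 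This numerical check is a useful sanity test but not a proof; the real content is the sheaf-level identification above.
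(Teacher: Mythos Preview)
Your second route—pushing forward $0 \to \mathcal I_{Z \subset X} \to \O_X \to \O_Z \to 0$, identifying the kernel with $\O_Y(-D)$, then quotienting out the unit $\O_Y$ and dualizing—is exactly the paper's proof (note the typo: you write $\mathcal I_{Y\subset X}$ in the displayed sequence, but the quotient $\O_Z$ and your surrounding text make clear you mean the ideal of $Z$).

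Two small remarks. First, in your abandoned first attempt, the claim that distinctness or generality of the $z_i$ forces the composite $E_\psi^\vee \to \bigoplus_i \k_{y_i}$ to vanish is simply false—this evaluation map is almost never zero—so you were right to drop that route, but not for the reason you gave. Second, your trace-compatibility worry is slightly misdirected: the surjection $\phi_*\O_X \to \psi_*\O_Z$ is a map of $\O_Y$-algebras, hence carries the unit $\O_Y \hookrightarrow \phi_*\O_X$ isomorphically to the unit $\O_Y \hookrightarrow \psi_*\O_Z$; passing to the quotients by these sub-line-bundles immediately yields $0 \to \O_Y(-D) \to E_\phi^\vee \to E_\psi^\vee \to 0$, with no need to check compatibility of the trace \emph{projections}.
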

\begin{proof}
  The closed embedding $Z \to X$ gives a surjection 
  \[ \phi_* \O_X \to \psi_* \O_Z \]
  whose kernel is $\O_Y(-D)$.
  Factoring out the $\O_Y$ summand from both sides and taking duals yields the claimed exact sequence.
\end{proof}

\begin{proof}[Proof of \autoref{prop:pinching}]
  We use induction on $d$, starting with the base case $d = 1$, which is vacuous.

  By the inductive hypothesis, we may assume that there exists a nodal curve $Z$ and a finite cover $\psi \from Z \to Y$ of degree $(d-1)$ such that $E_\psi \cong L_2 \oplus \dots \oplus L_{d-1}$.
  Let $X = Z \cup Y \to Y$ be a cover of degree $d$ obtained from $Z \to Y$ by a pinching construction such that $\O_Y(D) = L_1$.
  By \autoref{lem:pinching}, we get an exact sequence
  \begin{equation}\label{eq:e-sequence}
    0 \to L_2 \oplus \dots \oplus L_{d-1} \to E_\phi \to L_1 \to 0.
  \end{equation}
  But we have $\Ext^1(L_1, L_i) = H^1(L_i \otimes L_{1}^{\vee}) = 0$ since $\deg (L_i \otimes L_{1}^{\vee}) \geq 2g_Y-1$.
  Therefore, the sequence \eqref{eq:e-sequence} is split, and we get $E_\phi = L_1 \oplus \dots \oplus L_{d-1}$.
  The induction step is then complete.  
\end{proof}

\subsection{Smoothing out}
\label{sec:smoothing}
In this section, we pass from singular covers to smooth covers and from particular vector bundles to their deformations.

\begin{proposition}[Key]
  \label{prop:key}
  Let $X$ be a nodal curve and $X \to Y$ a finite flat morphism with Tschirnhausen bundle $E$.
  For some line bundle $L$ on $Y$, the following holds.
  There exists a smooth curve $X'$ and a finite morphism $X' \to Y$ such that
  \begin{enumerate}
  \item The Tschirnhausen bundle of $X' \to Y$ is $E' = E \otimes L$.
  \item We have $H^1(X', N_{X'/ E'}) = 0$, where $X' \subset E'$ is the canonical affine embedding.
  \end{enumerate}
  Furthermore, there exists an $n$ (depending on $X \to Y$), such that the above holds for any $L$ of degree at least $n$.
\end{proposition}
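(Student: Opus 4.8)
The plan is to realize a suitable nodal degeneration of the sought cover inside a projective completion of $\Tot(E)$, to repair the cohomology of its normal bundle by attaching vertical rational normal curves, and then to smooth. Concretely, I would first reduce to the case where $X$ is connected, write the given $L$ as $\O_Y(\Gamma)$ for a reduced effective divisor $\Gamma$ of degree $s$ disjoint from the branch locus of $\phi$, set $S$ to be the support of $\Gamma$, and put $\overline E = \P(\O_Y \oplus E)$ with divisor at infinity $H_\infty = \P(E)$, so that the canonical affine embedding gives $X \subset \Tot(E) = \overline E \setminus H_\infty$ and $N_{X/\overline E} = N_{X/\Tot(E)}$. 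This normal bundle may have nonzero $H^1$, which is exactly the obstruction to smoothing $X$ directly; to fix it, for each $y \in S$ use that $X_y \subset E_y = \A^{d-1} \subset \overline E_y \cong \P^{d-1}$ is a set of $d$ points in affine --- hence linearly --- general position, so that a positive-dimensional family of smooth rational normal curves $R_y \subset \overline E_y$ of degree $d-1$ passes through them; choose $R_y$ general, subject to a prescribed order of contact with $H_\infty \cap \overline E_y = \P(E_y)$, tuned so that the elementary modification of $E$ produced at the end comes out to be exactly $E \otimes L$. Let $Z = X \cup \bigcup_{y\in S}R_y$: a connected nodal local complete intersection curve in $\overline E$ with $p_a(Z) = p_a(X) + s(d-1)$ whose general fiber over $Y$ is $d$ reduced points (the $R_y$ being contracted), so that the eventual smoothing covers $Y$ with degree $d$.

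The heart of the argument, and the step I expect to be the main obstacle, is the vanishing $H^1(Z, \mathcal N) = 0$, where $\mathcal N := N_{Z/\overline E}(-H_\infty)$ is the logarithmic normal sheaf controlling deformations of $Z$ in $\overline E$ that preserve its incidence with $H_\infty$. I would establish it by restriction to components and Mayer--Vietoris. On $R_y$: since $N_{\overline E_y/\overline E}|_{R_y} \cong \O_{R_y}$ and $N_{R_y/\overline E_y} \cong \O_{\P^1}(d+1)^{\oplus(d-2)}$, the extension defining $N_{R_y/\overline E}$ splits, and by \eqref{eq:inftogether} the restriction $N_{Z/\overline E}|_{R_y}$ is a degree-$d$ inflation of $\O(d+1)^{\oplus(d-2)}\oplus\O$; because $X$ is transverse to the fibers, each of the $d$ defining directions has nonzero component along the $\O$-summand, so $N_{Z/\overline E}|_{R_y} \cong \O(d+1)^{\oplus(d-2)}\oplus\O(d)$, and twisting down by the length-$(d-1)$ divisor $H_\infty\cap R_y$ leaves $\mathcal N|_{R_y} \cong \O(2)^{\oplus(d-2)}\oplus\O(1)$, which is globally generated with vanishing $H^1$. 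On $X$: since $X\cap H_\infty = \emptyset$ we have $\mathcal N|_X = N_{Z/\overline E}|_X$, which by \eqref{eq:inftogether} is a degree-$sd$ inflation of $N_{X/\Tot(E)}$ at $\bigsqcup_{y\in S}X_y$ with directions coming from the tangents of the $R_y$; for general choices these are general in the fibers of $N_{X/\Tot(E)}$, so iterating \autoref{prop:inf_1} --- one good inflation per fiber $X_y$ lowers $h^1$ by one, and by \autoref{prop:inf_inf} the remaining ones do no harm --- gives $H^1(X, \mathcal N|_X) = 0$ once $s \geq h^1(X, N_{X/\Tot(E)})$, and enlarging $s$ further makes $\mathcal N|_X$ as positive as we like (all its slopes growing with $s$). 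Feeding $H^1(\mathcal N|_X) = 0$, $H^1(\mathcal N|_{R_y}) = 0$, and the surjectivity of $H^0(\mathcal N|_X)\oplus\bigoplus_y H^0(\mathcal N|_{R_y})$ onto the node evaluations (which follows from that positivity together with the global generation of the $\mathcal N|_{R_y}$) into the Mayer--Vietoris sequence yields $H^1(Z, \mathcal N) = 0$.

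Finally, $Z$ being a nodal local complete intersection in the smooth variety $\overline E$ with $H^1(Z,\mathcal N) = 0$ and $\mathcal N$ globally generated near the nodes, a standard smoothing argument produces a deformation $Z \rightsquigarrow X'$ with $X'$ smooth and still meeting $H_\infty$ in the prescribed subscheme over $\Gamma$. The elementary transformation of $\overline E$ along $S$ that absorbs $X' \cap H_\infty$ carries $\overline E$ to $\P(\O_Y \oplus E')$ with $E' = E \otimes \O_Y(\Gamma) = E\otimes L$, moves $X'$ into $\Tot(E')$, and keeps its fibers over $Y$ in affine general position; by \autoref{lem:can}, $X' \subset \Tot(E')$ is then the canonical affine embedding of a degree-$d$ cover $X' \to Y$ whose Tschirnhausen bundle is exactly $E'$, which is (1). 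The same transformation identifies $\mathcal N$ with $N_{X'/\Tot(E')} = N_{X'/E'}$ up to a twist that is immaterial once $s \gg 0$, so upper semicontinuity along the smoothing family gives $H^1(X', N_{X'/E'}) = 0$, which is (2). Since every threshold invoked --- general position of $\Gamma$ in $|L|$, the bound $s \geq h^1(N_{X/\Tot(E)})$, and the positivity required of $\mathcal N|_X$ --- depends only on the fixed morphism $\phi \from X \to Y$, taking $n$ to be their maximum makes the construction go through for every $L$ of degree at least $n$.
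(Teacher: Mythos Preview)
Your overall architecture---attach vertical rational normal curves to $X$, kill $H^1$ of a normal sheaf, smooth, then perform a birational modification to land in $\Tot(E\otimes L)$---matches the paper's. But the specific choice of normal sheaf you make, namely the logarithmic $\mathcal N=N_{Z/\overline E}(-H_\infty)$, cannot have vanishing $H^1$ once $d\geq 5$. An Euler-characteristic count shows this. With your identifications $\mathcal N|_{R_y}\cong\O(2)^{d-2}\oplus\O(1)$ and $\mathcal N|_{X^\nu}$ a degree-$sd$ inflation of $\nu^*N_{X/\overline E}$, the normalization sequence gives
\[
\chi(\mathcal N)=\chi(\nu^*N_{X/\overline E})+sd+s(3d-4)-(d-1)\bigl(|\gamma|+sd\bigr)=C-s(d-1)(d-4),
\]
where $C$ is a constant depending only on $X\to Y$. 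For $d\geq 5$ this tends to $-\infty$ as $s$ grows, so $h^1(\mathcal N)\to\infty$; the vanishing you assert is simply false in this range. The trouble is that twisting by $-H_\infty$ costs $d-1$ on each $R_y$, which is too much: the ``vertical'' summand drops from $\O(d+1)^{d-2}$ to $\O(2)^{d-2}$, and for $d\geq 4$ this no longer separates the $d$ nodes $\delta_y$.

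The paper avoids this by performing the birational modification \emph{first}: it blows up $\overline E$ along the hyperplanes $H_y=H_\infty\cap\overline E_y$ for $y\in S$, obtaining $\widetilde P$ in which the proper transforms $P_y$ of the fibers are disjoint from the proper transform $\widetilde H$ of $H_\infty$. The rational normal curves $R_y$ live in $P_y$ and never meet $\widetilde H$, so one can work with the untwisted $N_{Z/\widetilde P}$, whose restriction to $R_y$ is $\O(d+1)^{d-2}\oplus\O(1)$; the same computation now gives $\chi(N_{Z/\widetilde P})=C'+s(2d-2)\to+\infty$. Even so, the horizontal summand $\O(1)$ still fails to separate the $d$ nodes, and establishing the Mayer--Vietoris surjectivity at $\delta_S$ requires genuine work: the paper introduces an auxiliary sheaf $K$ on $Y$ (see \eqref{eq:defK}) and a linear non-degeneracy lemma (\autoref{lem:lingen}) to show that, for general $R_y$, the missing horizontal sections are supplied from the $X$-side. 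Your sentence ``which follows from that positivity together with the global generation of the $\mathcal N|_{R_y}$'' is exactly the step that needs this machinery; positivity alone is not enough, because the $sd$ nodes $\delta_S$ grow in number at the same rate as the degree of $\mathcal N|_X$. Finally, the ``elementary transformation that absorbs $X'\cap H_\infty$'' and the ``prescribed order of contact'' are not precise as written; in the paper this is replaced by the concrete blow-up/blow-down $\widetilde P\to\P'$, after which one checks directly that $\P'\setminus\widetilde H\cong\Tot(E\otimes\O_Y(S))$.
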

The rest of \autoref{sec:smoothing} is devoted to the proof of \autoref{prop:key}.

Set $P = \P(E^\vee \oplus \O_Y)$, the space of one-dimensional quotients of $E^\vee \oplus \O_Y$.
Let $H \cong \P E^\vee \subset P$ be the hyperplane at infinity, where the embedding $ H \subset P$ is defined by the projection \[E^\vee \oplus \O_Y \to E^\vee.\]
The complement of $H \subset P$ is the total space $\Tot(E)$ of $E$.

Let $S \subset Y$ be a finite set over which $X \to Y$ is \'etale.
For $y \in S$, the set $X_y \subset P_y \cong \P^{d-1}$ consists of $d$ points in linear general position, and $H_y \subset P_y$ is a hyperplane not passing through any of these points.

There exists a smooth rational normal curve $R_y \subset P_y$ which contains $X_y$ and which is transverse to $H_y$.
We can explicitly write down such curves as follows.
Pick homogeneous coordinates $[Y_1: \cdots :Y_{d}]$ on $P_y \cong \P^{d-1}$ such that the points of $X_y$ are the coordinate points and $H_y$ is the hyperplane $\sum Y_i = 0$.
Let $b_1, \dots, b_d \in \k^\times$ and $a_1, \dots ,a_d \in \k$ be arbitrary constants with $a_i \neq a_j$ for $i \neq j$.
Let $x$ be a variable and set $F = \prod (x-a_i)$.
We can take $R_y$ to be the rational normal curve given parametrically by 
\[ x \mapsto \left[\frac{b_1 F} {x-a_1}: \dots : \frac{b_d F}{x-a_d}\right].\]
Note that the parametrization maps the points $a_i$ to the coordinate points.
Also, if the $b_i$ are general, then $R_y$ intersects $H_y$ transversely.

Fix $p \in X_y$.
Each $R_y$ gives a line $T_{R_y}|_p \subset T_{P_y}|_p$, which we interpret as a point of the corresponding projective space.
Using the parametrization, we can check that the set of these points for various $R_y$ is Zariski dense.
In other words, a general choice of $R_y$ gives a general tangent line $T_{R_y}|_p \subset T_{P_y}|_p$.

Let $\widetilde{P} \to P$ be the blow up at $\bigsqcup_{y \in S} H_y$.
Denote also by $R_y$ the proper transform of $R_y$ in $\widetilde{P}$.
Denote by $\widetilde H$ the proper transform of $H$ in $\widetilde{P}$ (see \autoref{fig:bl}).

The fiber of $\widetilde{P} \to Y$ over $y \in S$ consists of two components.
One is the exceptional divisor $E_y$ of the blow-up.
The second is the proper transform of $P_y$, which is a copy of $P_y$; we denote it also by $P_y$. 
The two components intersect transversely along a $\P^{d-2}$.
Note that $\widetilde H$ is disjoint from $P_y$, and hence also from $R_y \subset P_y$.

\begin{figure}
  \begin{tikzpicture}
    \draw
    node at (0,0) {\includegraphics{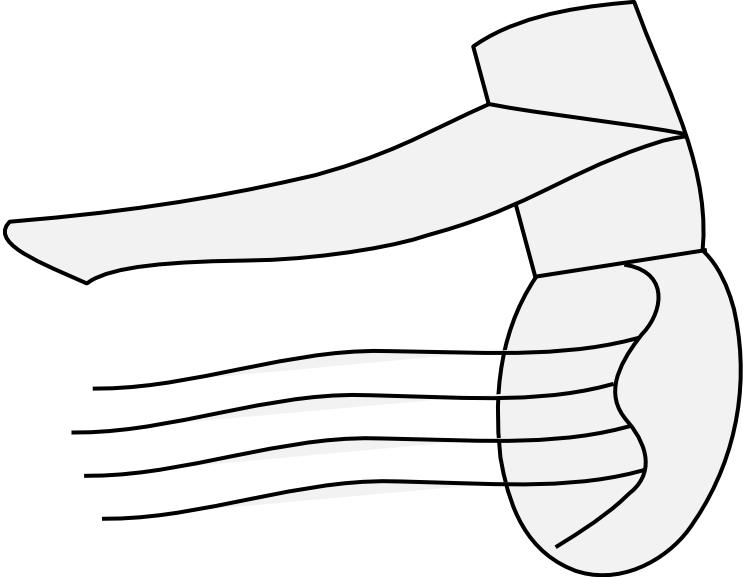}}
    node at (-3,-1.5) {$X$}
    node at (-3.5, 0.5) {$\widetilde H$}
    node at (2.5, -1) {$R_y$}
    node at (3.5, -0.5) {$P_y$}
    node at (3.2, 1.0) {$E_y$};
  \end{tikzpicture}
  \caption{Attaching rational normal curves to $X$ to make the normal bundle positive}
  \label{fig:bl}
\end{figure}

Set \[Z = X \cup_{y \in S} R_y.\]
Our goal is to establish the positivity of $N_{Z/\widetilde{P}}$.
First, we set some notation.
\begin{align*}
  \nu \from Z^\nu \to Z &:= \text{The normalization of $Z$},\\
  \phi \from Z^\nu \to Y &:= \text{The composite of $Z^\nu \to Z$ and $Z \to Y$},\\
  X^\nu &= \text{The normalization of $X$},\\
  \gamma &:= \text{The set of nodes of $X$},\\
  \Gamma &:= \text{The preimage of $\gamma$ in $Z^\nu$},\\
  \delta_y &:= R_y \cap X,\\
  \delta_S &:= \text{The disjoint union of $\delta_y$ for $y \in S$}\\
  P_S &:= \text{The disjoint union of $P_y$ for $y \in S$},\\
  R_S &:= \text{The disjoint union of $R_y$ for $y \in S$}.\\
\end{align*}
Note that $Z^\nu$ is the disjoint union of $X^\nu$ and $R_S$.
Every point of $\gamma$ has two preimages in $\Gamma$.
The singular set of $Z$ is $\gamma \cup \delta_S$.

Let $y$ be a point in $S$.
Denote by $\O(1)$ the line bundle of degree 1 on $R_y \cong \P^1$.
\begin{proposition}
  \label{lem:NR}
  The restriction of $N_{Z/\widetilde{P}}$ to $R_y$ is isomorphic to $\O(d+1)^{d-2} \oplus \O(1)$, where the sub-bundle $\O(d+1)^{d-2}$ is the image of the natural map
  \[ N_{R_y/P_y} \to N_{Z/\widetilde{P}}\big|_{R_y},\]
  and the quotient $\O(1)$ is an inflation of $N_{P_y/\widetilde{P}}\big |_{R_y}$ at the points of $\delta_y$.
\end{proposition}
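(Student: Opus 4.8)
The plan is to compute $N_{Z/\widetilde P}\big|_{R_y}$ by first understanding $N_{R_y/\widetilde P}$ and then accounting for the nodes $\delta_y = R_y \cap X$. Since $R_y$ lies in the component $P_y \cong \P^{d-1}$ of the fiber of $\widetilde P \to Y$ over $y$, and $P_y$ is a smooth divisor in $\widetilde P$, I would use the normal bundle exact sequence
\[ 0 \to N_{R_y/P_y} \to N_{R_y/\widetilde P} \to N_{P_y/\widetilde P}\big|_{R_y} \to 0. \]
Here $R_y$ is a rational normal curve in $\P^{d-1}$, so $N_{R_y/P_y} \cong \O(d+1)^{d-2}$ by the standard computation (Euler sequence plus $T_{\P^1} = \O(2)$, giving a self-extension of copies of $\O(d+1)$ which splits since $H^1(\O) = 0$ on $\P^1$). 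For $N_{P_y/\widetilde P}\big|_{R_y}$: $P_y$ is the proper transform of the fiber $\P_y$ under the blow-up of $\widetilde P \to \P$ along $H_y \subset \P_y$; since $\P_y$ was a fiber of $\P \to Y$ its normal bundle in $\P$ is trivial, and blowing up a hypersurface inside it twists this by $-H_y$, so $N_{P_y/\widetilde P}\big|_{R_y} \cong \O_{R_y}(-R_y \cap H_y)$. Because $R_y$ meets $H_y$ transversely in $d$ points (from the explicit parametrization), this is $\O(-d)$.

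Next I would pass from $R_y$ to $Z = X \cup_{y\in S} R_y$. The only part of $Z$ meeting $R_y$ other than $R_y$ itself is $X$, and it does so at the $d$ nodes $\delta_y$ (for different $y \in S$ the curves $R_y$ are disjoint from each other and $X$ is smooth at these points). By the nodal-inflation mechanism of \autoref{sec:normal_inflation} — applied with $P = \widetilde P$, $R = R_y$, $X$ replaced by the curve $X$ here, attached at the $d$ points of $\delta_y$ — the bundle $N_{Z/\widetilde P}\big|_{R_y}$ is a degree $d$ inflation of $N_{R_y/\widetilde P}$ supported at $\delta_y$. Crucially, \eqref{eq:inftogether} shows that this inflation only modifies the quotient $N_{P_y/\widetilde P}\big|_{R_y}$ summand direction: the sub-bundle $N_{R_y/P_y} \cong \O(d+1)^{d-2}$ maps into $N_{Z/\widetilde P}\big|_{R_y}$ and is unaffected by the inflation, because the node-inflation quotient map $N_{Z/\widetilde P}\big|_{R_y} \to N_{p/R_y}\otimes N_{p/X}$ factors through $N_{R_y/P_y}\big|_p \to N_{p/R_y}$ contracted against $T_pX$, and $T_pX \subset T_p P_y$ so the relevant contraction kills the $N_{R_y/P_y}$ directions. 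Hence the inflation is an inflation of the quotient line bundle $N_{P_y/\widetilde P}\big|_{R_y} \cong \O(-d)$, raising its degree by $d$ to give $\O(d)\otimes(\text{length-}d\text{ quotient adjustments})$; one checks the resulting line bundle is $\O(1)$, using that $\delta_y$ consists of $d$ points and the inflation at each node raises the degree by $1$, but also that $R_y$ is a rational normal curve so $N_{R_y/P_y}\to N_{Z/\widetilde P}|_{R_y}$ has image of degree $(d-2)(d+1)$ and the total degree of $N_{Z/\widetilde P}|_{R_y}$ is $\deg N_{R_y/\widetilde P} + d = (d-2)(d+1) - d + d = (d-2)(d+1)$... so the quotient has degree $(d-2)(d+1) - (d-2)(d+1) = 0$; I will need to recompute carefully, as the arithmetic above suggests the quotient is $\O(0)$ rather than $\O(1)$ unless the $R_y\cap H_y$ count or node count is off by one — the likely resolution is that $R_y$ meets $\widetilde H$ trivially inside $\widetilde P$ (noted in the text: ``$\widetilde H$ is disjoint from $P_y$''), so $N_{P_y/\widetilde P}|_{R_y}$ is $\O(-d+1) = \O(1-d)$ after all, giving quotient $\O(1)$.

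Finally, having identified both the sub-bundle $\O(d+1)^{d-2}$ and the quotient line bundle $\O(1)$, the extension
\[ 0 \to \O(d+1)^{d-2} \to N_{Z/\widetilde P}\big|_{R_y} \to \O(1) \to 0 \]
splits because $\Ext^1_{\P^1}(\O(1), \O(d+1)) = H^1(\P^1, \O(d)) = 0$, yielding the claimed isomorphism $N_{Z/\widetilde P}\big|_{R_y} \cong \O(d+1)^{d-2}\oplus \O(1)$. The main obstacle I anticipate is the bookkeeping in the middle step: pinning down exactly how many points $R_y$ meets $H_y$ (and confirming $R_y$ avoids the exceptional divisor and $\widetilde H$), so that the degree of $N_{P_y/\widetilde P}\big|_{R_y}$ before inflation is correctly $\O(1-d)$, and then verifying that the length-$d$ inflation at $\delta_y$ moves it to exactly $\O(1)$ — equivalently, that the $d$ nodes impose $d$ independent conditions raising the degree by exactly $d$. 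Getting the interaction between the blow-up geometry and the node-inflation sequence \eqref{eq:inftogether} exactly right is the crux; everything else is the standard rational-normal-curve normal bundle computation and a vanishing-$\Ext$ splitting argument.
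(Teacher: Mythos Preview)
Your overall structure matches the paper's: identify $N_{R_y/P_y}\cong\O(d+1)^{d-2}$ as a subbundle of $N_{Z/\widetilde P}\big|_{R_y}$, compute the degree of the quotient line bundle, and split by $\Ext^1$-vanishing. But there are two concrete errors in the execution.

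First, the intersection count: a rational normal curve in $\P^{d-1}$ has degree $d-1$, so $R_y$ meets the hyperplane $H_y$ transversely in $d-1$ points, not $d$. This already gives $N_{P_y/\widetilde P}\big|_{R_y}\cong\O(1-d)$; your eventual answer is right, but the reason you offer (``$R_y$ meets $\widetilde H$ trivially'') is not the relevant computation.

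Second, and this is the real gap, your argument for why $N_{R_y/P_y}$ remains a \emph{subbundle} after the nodal inflation is backwards. You assert $T_pX\subset T_pP_y$, but $X\to Y$ is \'etale over $y\in S$, so $T_pX$ surjects onto $T_yY$ and hence $T_pX\not\subset T_pP_y$. It is precisely this \emph{transversality} of $X$ to $P_y$ that makes the argument work: the defining quotient $q_p\colon N_{R_y/\widetilde P}^\vee\big|_p\to k$ of the inflation is contraction with $T_pX$, and transversality forces $q_p$ to be nonzero on the one-dimensional subspace $\bigl(N_{P_y/\widetilde P}\bigr)^\vee\big|_p$. Consequently the elementary modification happens entirely in the $N_{P_y/\widetilde P}$ direction, so $N_{R_y/P_y}\to N_{Z/\widetilde P}\big|_{R_y}$ stays injective on every fibre. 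Were $T_pX\subset T_pP_y$ as you claim, the inflation would instead hit the $N_{R_y/P_y}$ block and destroy the subbundle property. The paper singles out exactly this point (``Using that $X$ is transverse to $P$, a local computation shows that the injection \ldots\ remains an injection when restricted to any point of $R$'').

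With these two corrections your proof goes through: the quotient is the degree-$d$ inflation of $\O(1-d)$ at $\delta_y$, hence $\O(1)$, and the splitting follows from $\Ext^1\bigl(\O(1),\O(d+1)\bigr)=0$ as you say.
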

\begin{proof}
  First, note that $N_{Z/\widetilde{P}}\big|_R$ is a vector bundle of rank $(d-1)$ and degree $(d-2)(d+1)+1$.
  The map $N_{R_y/P_y} \to N_{Z/\widetilde{P}}\big|_{R_y}$ is the composite
  \[ N_{R_y/P_y} \to N_{R_y/\widetilde{P}} \to N_{Z/\widetilde{P}}\big|_{R_y}\]
  Using that $X$ is transverse to $P_y$, a local computation shows that the injection $N_{R_y/P_y} \to N_{Z/\widetilde{P}}\big|_{R_y}$ remains an injection when restricted to any point of $R_y$.
  Since $R_y \subset P_y \cong \P^{d-1}$ is a rational normal curve, we know that $N_{R_y/P_y} \cong \O(d+1)^{d-2}$ (see, for example, \cite[II]{sac:80} or \cite[Example~4.6.6]{ser:06}).
  We thus get an exact sequence
  \begin{equation}
    0 \to \O(d+1)^{d-2} \to N_{Z/\widetilde{P}}\big|_{R_y} \to \O(1) \to 0.
  \end{equation}
  Since $\Ext^1(\O(1), \O(d+1)) = 0$, this sequence splits, and we get the desired isomorphism.
  The description of the sub and the quotient follows from the following diagram
  \[
    \begin{tikzcd}
      0 \arrow{r}& N_{R_y/P_y} \arrow{r}\arrow{d}{\simeq} &N_{R_y/\widetilde{P}} \arrow{r}\arrow{d}& N_{P_y/\widetilde{P}}\big|_{R_y} \arrow{r}\arrow{d}& 0\\
      0 \arrow{r}& \O(d+1)^{d-2} \arrow{r}& N_{Z/\widetilde{P}}\big|_{R_y} \arrow{r}& \O(1) \arrow{r}& 0.
  \end{tikzcd}
  \]
\end{proof}
Denote by $F$ the quotient line bundle in the statement of \autoref{lem:NR}, namely
\[F = \coker \left(N_{R_S/P_S} \to N_{Z/\widetilde{P}} \big|_{R_S} \right) = N_{P_S/\widetilde{P}}\big|_{R_S} \otimes \O_{R_S}\left(\delta_S\right) .\]
Set $D_S = R_S \cap E$, where $E$ is the exceptional divisor of the blow up $\widetilde{P} \to P$.
Then we have
\begin{align*}
  F &= N_{P_S/\widetilde{P}}\big|_{R_S} \otimes \O_{R_S}\left(\delta_S\right) \\
  &= \phi^* N_{S/Y} \otimes \O_{R_S}\left(\delta_S - D_S\right).
\end{align*}

Combining the diagram in \eqref{eq:inftogether} and the conclusions of \autoref{lem:NR} gives the following diagram with exact rows and exact middle column
\begin{equation}\label{eq:biginf}
  \begin{tikzcd}
    & N_{R_S/P_S} \arrow[hookrightarrow]{r}\arrow[equal]{d}& N_{R_S/\widetilde{P}} \arrow[two heads]{r}\arrow[hookrightarrow]{d}& N_{P_S/\widetilde{P}_S}\big|_{R_S}\arrow[hookrightarrow]{d}\\
    &N_{R_S/P_S} \arrow[hookrightarrow]{r}& N_{Z/\widetilde{P}}\big|_{R_S} \arrow[two heads]{r}  \arrow[two heads]{d}  & F \\
    N_{X/\widetilde{P}} \arrow[hookrightarrow]{r}& N_{Z/\widetilde{P}}\big|_X \arrow[two heads]{r}& N_{\delta_S/X} \otimes N_{\delta_S/R_S}.
  \end{tikzcd}
\end{equation}
In \eqref{eq:biginf}, the first row is standard, the second row is the definition of $F$, and the bottom row and the middle column are from \eqref{eq:inftogether}.
Let
\begin{equation}\label{eq:defe}
  e \from F  \to N_{\delta_S/X} \otimes N_{\delta_S/R_S}
\end{equation}
be the map induced in \eqref{eq:biginf}.

We describe the maps in \eqref{eq:biginf} in local coordinates.
Let $y \in S \subset Y$, and let $t$ be a local coordinate of $Y$ around $y$.
Let $p \in \delta_y$.
Extend $t$ to a (formal) local coordinate system $t_1 = t, t_2, \dots, t_d$ of $\widetilde P$ around $p$.
Assume that the curve $Z$ is defined locally by $t_1t_2 = t_3 = \dots = t_d = 0$, the curve $R_y$ by $t_1 = t_3 = \dots = t_d = 0$, and the curve $X$ by $t_2 = \dots = t_d = 0$.
Around $p$, the diagram \eqref{eq:biginf} is the following:
\begin{equation}\label{eq:biginflocal}
  \begin{tikzcd}
    & \O_{R_y}\left\langle\frac{\partial}{\partial t_3}, \dots, \frac{\partial}{\partial t_d}  \right\rangle \arrow[hookrightarrow]{r}\arrow[equal]{d}& \O_{R_y}\left\langle\frac{\partial}{\partial t_1}, \frac{\partial}{\partial t_3}, \dots, \frac{\partial}{\partial t_d} \right\rangle \arrow[two heads]{r}\arrow[hookrightarrow]{d}{\frac{\partial}{\partial t_1} \mapsto t_2 \epsilon}&
    \O_{R_y}\left\langle\frac{\partial}{\partial t_1} \right\rangle\arrow[hookrightarrow]{d}\\
    & \O_{R_y}\left\langle\frac{\partial}{\partial t_3}, \dots, \frac{\partial}{\partial t_d}  \right\rangle  \arrow[hookrightarrow]{r}& 
\O_{R_y}\left\langle \epsilon, \frac{\partial}{\partial t_3}, \dots, \frac{\partial}{\partial t_d} \right\rangle
\arrow[two heads]{r}  \arrow[two heads]{d}{\epsilon \mapsto \frac{\partial}{\partial t_1} \otimes \frac{\partial}{\partial t_2}}  &
\O_{R_y} \langle \epsilon \rangle
\\
&& \k_p \left\langle  \frac{\partial}{\partial t_1} \right\rangle \otimes \k_p \left\langle  \frac{\partial}{\partial t_2} \right\rangle
  \end{tikzcd}
\end{equation}
Above, $\epsilon$ is the dual of the generator $t_1t_2$ of $I_{Z/\widetilde P}/I_{Z/\widetilde P}^2$.
We emphasize that although $t_1, \dots, t_d$ is a local coordinate frame around $p \in \delta_y$, the first coordinate $t_1 = t$ is common to all the points of $\delta_y$.

For a coordinate free description, recall the isomorphism
\begin{equation}\label{eqn:identifyF}
  F = \phi^* N_{S/Y} \otimes \O_{R_S}\left(\delta_S - D_S\right).
\end{equation}
Over $y \in S \subset Y$ around which $t$ is a local coordinate, this isomorphism sends the rational section $\frac{\partial}{\partial t} = \frac{\partial}{\partial t_1}$ of $N_{P_y/\widetilde P} \subset F$ to the rational section $\frac{\partial}{\partial t} \otimes 1$ of $\phi^* N_{S/Y} \otimes \O_{R_S}\left(\delta_S - D_S\right)$.
By combining this isomorphism with the natural identifications
\begin{align*}
  N_{\delta_S/X} &= \phi^*N_{S/Y} \big|_{\delta_S}, \text{ and }\\
  N_{\delta_S/R_S} &= \O_{R_S}\left(\delta_S\right) \big | _{\delta_S},
\end{align*}
we see that the map
\begin{equation}\label{eq:desce}
  e \from F \to N_{\delta_S/X} \otimes N_{\delta_S/R_S}
\end{equation}
is simply the composite of the inclusion
\[ \phi^* N_{S/Y} \otimes \O_{R_S}\left(\delta_S - D_S\right) \to \phi^* N_{S/Y} \otimes \O_{R_S}\left(\delta_S\right)\]
and the restriction 
\[ \phi^* N_{S/Y} \otimes \O_{R_S}\left(\delta_S\right) \to \phi^* N_{S/Y}\big|_{\delta_S} \otimes \O_{R_S}\left(\delta_S\right)\big|_{\delta_S}.\]

We have the following immediate consequence of the last row in \eqref{eq:biginf}.
\begin{proposition}
  \label{lem:NXH1}
  If the size of $S$ is large, its points are general, and the rational normal curves $R_y$ are general, then we have $H^1(X^\nu, \nu^* N_{Z / \widetilde{P}}|_{X^\nu}) = 0$.
\end{proposition}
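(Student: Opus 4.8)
The plan is to recognize $\nu^{*} N_{Z/\widetilde P}|_{X^{\nu}}$ as an inflation of $\nu^{*} N_{X/\widetilde P}$ and then invoke the cohomology estimates of \autoref{sec:inf}. Concretely, pull back the bottom row of \eqref{eq:biginf} along $\nu \from X^{\nu} \to X$. Since $\delta_{S}$ is disjoint from the node set $\gamma$ of $X$, the map $\nu$ is an isomorphism over a neighbourhood of $\delta_{S}$, and the pullback remains exact:
\[
  0 \to \nu^{*} N_{X/\widetilde P} \to \nu^{*} N_{Z/\widetilde P}\big|_{X^{\nu}} \to \bigoplus_{p \in \delta_{S}} \left(N_{\delta_{S}/X} \otimes N_{\delta_{S}/R_{S}}\right)_{p} \to 0 .
\]
This exhibits $\widetilde B := \nu^{*} N_{Z/\widetilde P}|_{X^{\nu}}$ as a degree $d\lvert S\rvert$ inflation of $B := \nu^{*} N_{X/\widetilde P}$ at the points of $\delta_{S}$ (recall that $\delta_{y} = X_{y}$ consists of $d$ points for each $y \in S$).

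Next I would pin down the defining quotients of this inflation. By the discussion of \autoref{sec:normal_inflation}, applied with the roles of the two curves there interchanged — here it is the curve $X$ that is being inflated — the defining quotient at a point $p$ lying over $y \in S$ is the composite of $d \from I_{X/\widetilde P}|_{p} \to \Omega_{\widetilde P}|_{p}$ with contraction against a generator of $T_{p} R_{y}$; equivalently, it is determined by the image of the line $T_{p} R_{y}$ under the projection $T_{\widetilde P}|_{p} \twoheadrightarrow N_{X/\widetilde P}|_{p}$. Because $\phi$ is étale over $y$, the line $T_{p}X$ is transverse to the fibre direction $T_{p}P_{y}$, so this projection restricts to an isomorphism $T_{p}P_{y} \xrightarrow{\,\sim\,} N_{X/\widetilde P}|_{p}$ of $(d-1)$-dimensional spaces. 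By the explicit parametrization of the curves $R_{y}$ recalled just before \autoref{lem:NR}, a general choice of $R_{y}$ makes $T_{p}R_{y}$ a general line in $T_{p}P_{y}$, hence a general line in $N_{X/\widetilde P}|_{p}$; thus the defining quotient of $\widetilde B$ at $p$ is a general quotient $N_{X/\widetilde P}^{\vee}|_{p} \to \k$.

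To conclude, fix a connected component $C$ of $X^{\nu}$. Since $\phi$ is finite, $C$ surjects onto $Y$, so $\delta_{S} \cap C$ contains at least one point over each $y \in S$; forgetting the inflation at all but one such point per fibre yields a bundle $\widetilde B'_{C}$ with $B|_{C} \subset \widetilde B'_{C} \subset \widetilde B|_{C}$, whence $h^{1}(C, \widetilde B|_{C}) \leq h^{1}(C, \widetilde B'_{C})$ by \autoref{prop:inf_inf}. Now $\widetilde B'_{C}$ is a degree $\lvert S\rvert$ inflation of $B|_{C}$ at $\lvert S\rvert$ points lying over the general set $S$, and its $\lvert S\rvert$ defining quotients are general and independent of one another because the curves $R_{y}$ are chosen independently; so $\widetilde B'_{C}$ is a general degree $\lvert S\rvert$ inflation of $B|_{C}$. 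By \autoref{prop:inf_cohom}, $H^{1}(C, \widetilde B'_{C}) = 0$ once $\lvert S\rvert \geq h^{1}(C, B|_{C})$, which holds for every component $C$ as soon as $\lvert S\rvert \geq h^{1}(X^{\nu}, B)$. Therefore $H^{1}(C, \widetilde B|_{C}) = 0$ for all $C$, and hence $H^{1}(X^{\nu}, \widetilde B) = 0$.

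The step I expect to demand the most care is the second one, together with its use in the third. The worry is that the inflations \emph{produced} by attaching rational normal curves might not be generic enough to feed into \autoref{prop:inf_cohom}, since the $d$ defining quotients arising over a single $y \in S$ are all governed by the one curve $R_{y}$ and are thus far from independent. Passing to the sub-inflation $\widetilde B'_{C}$, which exploits only one point of each fibre, side-steps this, at the cost of forcing $\lvert S\rvert$ to be somewhat larger than $h^{1}$. One should also carefully verify the transversality input — that $\phi$ being étale over $y$ forces $T_{p}X \cap T_{p}P_{y} = 0$, and hence the isomorphism $T_{p}P_{y} \cong N_{X/\widetilde P}|_{p}$ — as this is exactly what converts ``a general tangent line to $R_{y}$'' into ``a general defining quotient.''
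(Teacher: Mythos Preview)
Your proof is correct and follows essentially the same route as the paper: work on each component of $X^{\nu}$, use \'etaleness over $y$ to identify $T_{p}P_{y}$ with $N_{X/\widetilde P}|_{p}$, observe that a general $R_{y}$ gives a general defining quotient at one chosen point $p$ per fibre, and then conclude via the general-inflation machinery of \autoref{sec:inf}. The only cosmetic difference is that the paper packages your final two invocations of \autoref{prop:inf_inf} and \autoref{prop:inf_cohom} into the single \autoref{cor:gen_inf}.
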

\begin{proof}
  Let $X_i$ be a component of $X^\nu$ and let $\nu_i \from X_i \to Z$ be composite of the inclusion $X_i \subset Z^\nu$ and $\nu \from Z^\nu \to Z$.
  Then $\nu^* N_{Z/\widetilde{P}}|_{X_i} = \nu_i^* N_{Z/\widetilde{P}}$.
  We have the exact sequence
  \[ 0 \to  \nu_i^* N_{X / \widetilde{P}} \xrightarrow{\iota} \nu_i^* N_{Z/\widetilde{P}} \to \nu_i^* (N_{\delta_S/X} \otimes N_{\delta_S/R_S}) \to 0.\]
  Let $p$ be a point in ${\delta_S} \cap X_i$ lying over $y \in S$.
  Since $X_i \to Y$ is \'etale over $y$, the natural map
  \[ T_{P_y} \big|_p \to N_{X / \widetilde{P}} \big|_p\]
  is an isomorphism; use it to identify $T_{P_y}|_p$ and $N_{X / \widetilde{P}} |_p$ (and their duals).
  The defining quotient of the inflation $\iota$ at $p$ is the restriction map (see  \eqref{sec:inf}).
  \[ q \from \Omega_{P_y}\big|_p \to \Omega_{R_y}|_p.\]
  If $R_y \subset P_y$ is general, then $T_{R_y}|_p \subset T_{P_y}|_p$ is a general line, and hence $\iota$ is a general inflation at $p$.
  Since the above holds for a point $p$ over every point $y \in S$, we see that $\nu_i^* N_{Z / \widetilde{P}}$ contains a general degree $|S|$ inflation of $\nu_i^* N_{X/\widetilde{P}}$.
  The proposition now follows from \autoref{cor:gen_inf}.
\end{proof}

Thanks to \autoref{lem:NR} and \autoref{lem:NXH1}, the pullbacks of $N_{Z / \widetilde{P}}$ to all the components of $Z^\nu$ have no higher cohomology.
That is, we have $H^1(\nu^* N_{Z/\widetilde{P}}) = 0$.
Our eventual goal is to show that $H^1(N_{Z / \widetilde{P}}) = 0$.
For that, we must establish the surjectivity of the map
\[ H^0(\nu^*N_{Z/\widetilde{P}}) \to H^0\left(N_{Z/\widetilde{P}}\big|_{\gamma \cup \delta_S}\right)\]
induced by the sequence
\[0 \to N_{Z /\widetilde{P}} \to \nu_*\nu^* N_{Z/\widetilde{P}} \to N_{Z /\widetilde{P}}\big |_{\gamma \cup \delta_S} \to 0.\]
To motivate further constructions, let us describe the key difficulty in showing such a surjection.
The nodes $\gamma$ will not be a big issue, so let us focus on $\delta_S$.
By \autoref{lem:NR},  we have the splitting of the normal bundle into ``vertical'' and ``horizontal'' components
\[N_{Z/\widetilde{P}}\big|_{R_S} = \O_{R_S}(d+1)^{d-2} \oplus \O_{R_S}(1).\]
The vertical summand is positive enough to have a surjection
\[ H^0\left(R_S, \O_{R_S}(d+1)^{d-2}\right) \to H^0\left(\delta_S, \O_{R_S}(d+1)^{d-2}\big|_{\delta_S}\right).\]
It remains to show that we have a surjection
\[ H^0\left(X^\nu, \nu^* N_{Z/\widetilde{P}}\big|_{X^\nu}\right)  \oplus H^0(R_S, \O_{R_S}(1)) \to H^0\left(\delta_S, \O_{R_S}(1) \big|_{\delta_S}\right).\]
Taking advantage of the first summand (which is clearly necessary) is a global problem.
We recast it in terms of a local problem by defining a sheaf $K$, whose construction depends locally around $S$.
The surjectivity problem will reduce to the vanishing of the higher cohomology of $K$.

Having explained the motivation, let us construct $K$.
Let $\chi \from Z^\chi \to Z$ be the normalization of $Z$ at $\gamma$.
Abusing notation, also let $\phi$ denote the map $Z^\chi \to Y$.
The bottom part of \eqref{eq:biginf} gives the following diagram of sheaves on $Z^\chi$
\begin{equation}\label{eq:prepreK}
  \begin{tikzcd}
    {} & {} & F\arrow{d}{e} \\
    \chi^* N_{X/\widetilde{P}} \arrow[hookrightarrow]{r}& \chi^* \left(N_{Z/\widetilde{P}}\big|_X\right) \arrow[two heads]{r}& N_{\delta_S/X} \otimes N_{\delta_S/R_S}. 
  \end{tikzcd}
\end{equation}
Twist by $\O_{Z^\chi}(-\Gamma)$, which is trivial on $F$ and $\delta_S$, and apply $\phi_*$ to get
\begin{equation}\label{eq:preK}
  \begin{tikzcd}
    {} & {} &{\phi}_*F \arrow{d}{\phi_* e}\\
    {\phi}_*\left(\chi^* N_{X/\widetilde{P}}\left(-\Gamma\right)\right) \arrow[hookrightarrow]{r}& \phi_*\left(\chi^* \left( N_{Z/\widetilde{P}}\big|_X\right) \left(-\Gamma\right)\right) \arrow[two heads]{r}& \phi_*\left(N_{\delta_S/X} \otimes N_{\delta_S/R_S}\right).
  \end{tikzcd}
\end{equation}
The two sheaves $\chi^*N_{X/\widetilde{P}}(-\Gamma)$ and $\chi^* N_{Z/\widetilde{P}}|_X (-\Gamma)$ are supported on $X^\nu \subset Z^\chi$.
On $X^\nu$, the map $\phi$ restricts to a finite map $X^\nu \to Y$; hence the row remains exact after applying $\phi_*$.
The sheaf $F$ is supported on $R_S \subset Z^\chi$.
On $R_S$, the map $\phi$ restricts to a contraction $R_S \to S \subset Y$.
As a result, although $e$ is surjective, $\phi_*e$ is not.
Since $F \cong \O_{R_S}(1)$ and $\delta_S$ consists of $d$ points on each rational curve in $R_S$, the map $\phi_* e$ is injective.

Let $K$ be the bundle on $Y$ defined by
\begin{equation}\label{eq:defK}
  K = \ker \left(\phi_*\left(\chi^* \left(N_{Z/\widetilde{P}}\big|_X\right)\left(-\Gamma\right)\right) \to \coker \phi_*e\right).
\end{equation}

The definition of $K$ places it in two important exact sequences.
First, we have 
\begin{equation}\label{eq:defK2}
  0 \to K \to \phi_* \chi^* \left(N_{Z/\widetilde{P}}\big|_X\right) \to \phi_*\left(\chi^* N_{Z/\widetilde{P}} \big|_\Gamma\right) \bigoplus \coker \phi_*e \to 0.
\end{equation}
Second, setting 
\[ M = \chi^* N_{X/\widetilde{P}}\left(-\Gamma\right),\]
we have 
\begin{equation}\label{eq:Kinf}
  0 \to \phi_*M \to K \to \phi_* F \to 0.
\end{equation}
Note that $\phi_* F$ is supported on $S$ with stalks isomorphic to $\k^2$.
Therefore, $K$ is a degree $2|S|$ inflation of $\phi_*M$.

Let us identify the defining quotient of the inflation \eqref{eq:Kinf} at $y \in S \subset Y$.
Let $U \subset Y$ be a small open subset around $y$.
By the definition of $K$, we have the diagram
\begin{equation}\label{eq:preKU}
  \begin{tikzcd}
    \phi_*M|_U \arrow[hookrightarrow]{r}\arrow[equal]{d}& K|_U \arrow[two heads]{r} \arrow{d} &{\phi}_*F \arrow{d}{\phi_* e}\\
    {\phi}_*\left(N_{X/\widetilde{P}}\right)\big|_U \arrow[hookrightarrow]{r}& \phi_*\left( N_{Z/\widetilde{P}}\big|_X\right)\big|_U \arrow[two heads]{r}& \phi_*\left(N_{\delta_y/X} \otimes N_{\delta_S/R_y}\right).
  \end{tikzcd}
\end{equation}
From this diagram, we see that the defining quotient
\[ \phi_* M|_y^\vee \to (\phi^*F)^\vee \otimes T_yY\]
is the composite of the defining quotient of the bottom row
\begin{equation}\label{eqn:defq1}
  \phi_* M|_y^\vee \to \phi_*\left(N_{\delta_y/X} \otimes N_{\delta_y/R_y}\right)^\vee \otimes T_yY,
\end{equation}
and the map 
\begin{equation}\label{eqn:defq2}
  \phi_*\left(N_{\delta_y/X} \otimes N_{\delta_y/R_y}\right)^\vee \otimes T_yY \to (\phi_*F)^\vee \otimes T_yY
\end{equation}
dual to $\phi_*e$.

We begin by studying the first map \eqref{eqn:defq1}.
Using $\phi^*T_yY = N_{\delta_y/X}$ and the push-pull formula, the target of the first map \eqref{eqn:defq1} simplifies to $\phi_* N_{\delta_y/R_Y}^\vee$.
With this simplification, the map \eqref{eqn:defq1} becomes
\[ \phi_* I_{X/\widetilde P}|_y =  \phi_* N_{X/\widetilde P} |^\vee_y \to \phi_* N_{\delta_y/R_y}^\vee = \phi_*\Omega_{R_y}\big |_{\delta_y}.\]
This map is $\phi_*$ applied to the defining quotient of the nodal curve sequence
\[ I_{X/\widetilde P}|_{\delta_y} \to \Omega_{R_y}|_{\delta_y}\]
as described in \eqref{eq:nodaldefiningquotient}, namely the composite of the map $d \from I_{X/\widetilde P}|_{\delta_y} \to \Omega_{\widetilde P}|_{\delta_y}$ and the restriction map $\Omega_{\widetilde P}|_{\delta_y} \to \Omega_{R_y}|_{\delta_y}$.
We can break this up as
\[
  I_{X/\widetilde P}|_{\delta_y} \xrightarrow{d} \Omega_{\widetilde P} |_{\delta_y} \xrightarrow{r} \Omega_{P_y}|_{\delta_y} \to \Omega_{R_y}|_{\delta_y}.
\]
Since $X$ is transverse to $P_y$, the composite $r \circ d$ is an isomorphism.
Via this isomorphism, the first defining quotient \eqref{eqn:defq1} becomes $\phi_*$ of the restriction map
\[ \Omega_{P_y}|_{\delta_y} \to \Omega_{R_y}|_{\delta_y}.\]

Having understood the first map \eqref{eqn:defq1}, we look at the second one \eqref{eqn:defq2}.
Using the push-pull formula as above, the source of \eqref{eqn:defq2} is
\[\phi_* N_{\delta_y/R_y}^\vee = \phi_*\Omega_{R_y} \big|_{\delta_y}.\]
Again, using the push-pull formula and the identification from \eqref{eqn:identifyF}, we see that the target is
\[ (\phi_* F)^\vee \otimes T_y Y = \left(\phi_*\O_{R_y}(\delta_y-D_y)\right)^\vee.\]
Recall from \eqref{eq:desce} that under these identifications, the map $e$ becomes
\[ e \from \O_{R_y}(\delta_y - D_y) \to N_{\delta_y/R_y} = \O(\delta_y)|_{\delta_y},\]
given by the composite of the inclusion map $\O_{R_y}(\delta_y - D_y)  \subset \O_{R_y}(\delta_y)$ and the restriction map
$\O_{R_y}(\delta_y) \to \O_{R_y}(\delta_y) |_{\delta_y}$.
Thus, the second map \eqref{eqn:defq2}
\[ \phi_* \Omega_{R_y}|_{\delta_y} = \phi_* N_{\delta_y/R_y}^\vee \to H^0(R_y, \O_{R_y}(\delta_y-D_y))^\vee\]
is the dual of the map on global sections induced by $e$.

In summary, the defining quotient of $\phi_*M \to K$ at $y$ is canonically identified with the map
\begin{equation}\label{eqn:defquotientMK}
  H^0\left(\Omega_{P_y}|_{\delta_y}\right) \to H^0\left(\O_{R_y}(\delta_y - D_y)\right)^\vee
\end{equation}
obtained by the natural restrictions, inclusions, and duals.
We emphasize that the description of the map \eqref{eqn:defquotientMK} is entirely in terms of objects over $y$.

\begin{proposition}\label{prop:h1k}
  If the size of $S$ is large, its points are general, and the rational normal curves $R_y$ are general, then we have $H^1(Y, K) = 0$.
\end{proposition}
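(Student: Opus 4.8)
The plan is to read off from \eqref{eq:Kinf} that $K$ is a degree $2|S|$ inflation of $\phi_{*}M$, and then to show that the particular inflation produced by our construction is general enough that the machinery of \autoref{sec:inf} forces $H^{1}$ to vanish once $|S|$ is large. The first observation is that $\phi_{*}M$ is a \emph{fixed} bundle, independent of $S$ and of the chosen rational normal curves: $X$ is disjoint from the blow-up centers $\bigsqcup_{y\in S}H_{y}$, so $N_{X/\widetilde P}=N_{X/\P}=N_{X/\Tot(E)}$, while $\Gamma$ depends only on the nodes of $X$; thus $M=\chi^{*}N_{X/\widetilde P}(-\Gamma)$ is a fixed bundle on the fixed smooth curve $X^{\nu}$, the map $\phi\from X^{\nu}\to Y$ is a fixed finite morphism, and $c:=h^{1}(Y,\phi_{*}M)$ is a fixed integer. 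By \eqref{eq:Kinf}, $K$ is a degree $2|S|$ inflation of $\phi_{*}M$ with torsion quotient $\phi_{*}F$ supported on $S$ with a two-dimensional stalk at each point; since $K$ is built by a construction local over $Y$, this inflation is a composite of degree $2$ inflations of $\phi_{*}M$, one at each $y\in S$, and the inflation at $y$ depends only on the choice of $R_{y}$.

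The core of the proof is a point-by-point reduction of $h^{1}$. Suppose we have already inflated $\phi_{*}M$ at some of the points of $S$, obtaining a bundle $G$ with $h^{1}(Y,G)\neq 0$, and let $y\in S$ be a point not yet used, so that $G$ coincides with $\phi_{*}M$ near $y$ and the defining quotient of the next inflation is a surjection $q_{y}\from(\phi_{*}M)^{\vee}_{y}\to\k^{2}$. By the argument in the proof of \autoref{prop:inf_1}, if $y$ is general then the image $V$ of the evaluation map $H^{0}(G^{\vee}\otimes\Omega_{Y})\to G^{\vee}\otimes\Omega_{Y}|_{y}$ is nonzero, so by \autoref{prop:good_bad_inf} (with $d=2$) and \autoref{prop:lingen_1} it suffices to prove that, as $R_{y}$ varies over general rational normal curves through $X_{y}$, the subspaces $\Lambda_{q_{y}}=\P(\ker q_{y})\subset\P((\phi_{*}M)^{\vee}_{y})$ span the whole projective space; for then some $R_{y}$ — hence, the condition being open, a general one — strictly lowers $h^{1}$. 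Granting this, $h^{1}$ reaches $0$ after at most $c$ steps and \autoref{prop:inf_inf} keeps it there for the remaining inflations, so $H^{1}(Y,K)=0$ for a generic choice of $S$ of size at least $c$ and a generic choice of the $R_{y}$. Making this last sentence precise is routine but needs a word of care: one runs the induction at the generic point of the irreducible parameter space of the data $(S,\{R_{y}\})$, noting that at each step the next point $y$, being generic over the field of definition of the bundle produced so far, avoids the relevant (finite) bad locus, and then invokes upper-semicontinuity of $h^{1}$.

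It remains to establish the spanning statement. Set $\mathcal G=\phi_{*}\bigl(\chi^{*}(N_{Z/\widetilde P}|_{X})(-\Gamma)\bigr)$, the ambient bundle of \eqref{eq:defK}. The bottom row of \eqref{eq:biginf} and the exact row of \eqref{eq:preK} exhibit $\mathcal G$ as a degree $d|S|$ inflation of $\phi_{*}M$ at $S$, and the inclusions $\phi_{*}M\subseteq K\subseteq\mathcal G$ coming from \eqref{eq:Kinf} and \eqref{eq:defK} dualize to $\mathcal G^{\vee}\subseteq K^{\vee}\subseteq(\phi_{*}M)^{\vee}$. Hence the defining quotient $q_{y}$ of $\phi_{*}M\hookrightarrow K$ at $y$ factors through the defining quotient $\widehat q_{y}\from(\phi_{*}M)^{\vee}_{y}\to\k^{d}$ of $\phi_{*}M\hookrightarrow\mathcal G$ at $y$, so $\ker q_{y}\supseteq\ker\widehat q_{y}$ and it is enough to span with the larger subspaces $\P(\ker\widehat q_{y})$. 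Now $\widehat q_{y}$ is explicit: since $\phi$ is \'etale over $y$, its fiber consists of the $d$ points $p$ over $X_{y}$, the degree $d$ inflation $\phi_{*}M\hookrightarrow\mathcal G$ at $y$ is the ``sum'' of the $d$ degree $1$ node inflations at those points, and under the identification $N_{X/\P}|_{p}\cong T_{P_{y}}|_{p}$ valid at each $p$ (as in the proof of \autoref{lem:NXH1}) the local description in \autoref{sec:normal_inflation} identifies the $p$-th component of $\widehat q_{y}$ with contraction $\Omega_{P_{y}}|_{p}\to\k$ against the tangent line $T_{R_{y}}|_{p}\subset T_{P_{y}}|_{p}$. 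Therefore $\ker\widehat q_{y}=\bigoplus_{p}(T_{R_{y}}|_{p})^{\perp}$ inside $(\phi_{*}M)^{\vee}_{y}=\bigoplus_{p}\Omega_{P_{y}}|_{p}$, whose annihilator is $\bigoplus_{p}T_{R_{y}}|_{p}$; a nonzero vector $(v_{p})_{p}$ lying in this annihilator for every admissible $R_{y}$ would force $T_{R_{y}}|_{p_{0}}=\langle v_{p_{0}}\rangle$ for all $R_{y}$, where $p_{0}$ is an index with $v_{p_{0}}\neq 0$, contradicting the fact — used also in \autoref{lem:NXH1} — that a general $R_{y}$ makes $T_{R_{y}}|_{p_{0}}$ a general line in $T_{P_{y}}|_{p_{0}}$. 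Hence $\bigcup_{R_{y}}\ker\widehat q_{y}$, and a fortiori $\bigcup_{R_{y}}\Lambda_{q_{y}}$, spans.

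\textbf{Main obstacle.} The delicate part is the identification in the third paragraph: one must carefully track the three sheaves $\phi_{*}M$, $K$, $\mathcal G$ and their duals through \eqref{eq:prepreK}--\eqref{eq:preK}, and match those maps with the local node-inflation computation of \autoref{sec:normal_inflation}, in order to pin down $\widehat q_{y}$ as the ``diagonal'' contraction against the $d$ tangent lines $T_{R_{y}}|_{p}$ (in particular to be sure that pushing forward along the map $\phi$, which is \'etale near $S$ but contracts $R_{S}$, does not disturb this picture). The genericity bookkeeping in the inductive step is a second, more routine, technicality that nonetheless must be spelled out.
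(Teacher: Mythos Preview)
Your overall plan---view $K$ as a degree $2|S|$ inflation of the fixed bundle $\phi_*M$ and drop $h^1$ point by point via \autoref{prop:lingen_1}---is correct and is exactly what the paper does. The gap is in the spanning step, and it is a genuine one.

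You identify $\Lambda_{q_y}$ with $\P(\ker q_y)$, but that is not what $\Lambda_q$ is. In the paper's quotient convention, a surjection $q\colon E^\vee_y\to\k^d$ gives the linear subspace $\Lambda_q\subset\P E^\vee_y$ consisting of one-dimensional quotients of $E^\vee_y$ that factor through $q$; equivalently, identifying $\P E^\vee_y$ with lines in $E_y$, one has $\Lambda_q=\P\bigl((\ker q)^\perp\bigr)$. The spanning hypothesis of \autoref{prop:lingen_1} is therefore $\bigcap_{R_y}\ker q_y=0$, not ``the kernels span''. Now your reduction runs the wrong way on both counts. First, since $q_y$ factors through $\widehat q_y$ you have $\ker \widehat q_y\subseteq\ker q_y$, hence $\Lambda_{q_y}\subseteq\Lambda_{\widehat q_y}$; showing the larger $(d{-}1)$-planes $\Lambda_{\widehat q_y}$ span does \emph{not} show the lines $\Lambda_{q_y}$ span. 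Second, what you actually prove at the end is $\bigcap_{R_y}(\ker\widehat q_y)^\perp=\bigcap_{R_y}\bigoplus_p T_{R_y}|_p=0$, which says the kernels $\ker\widehat q_y$ together span $(\phi_*M)^\vee_y$; this is neither $\bigcap_{R_y}\ker\widehat q_y=0$ nor $\bigcap_{R_y}\ker q_y=0$. So the argument does not verify the hypothesis of \autoref{prop:lingen_1} for either family of quotients.

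What is missing is precisely the content of \autoref{lem:lingen}: one has to identify the actual two-dimensional defining quotient $q_y$ (equivalently, the map $a_y\colon H^0(\Omega_{P_y}|_{\delta_y})\to H^0(\O_{R_y}(\delta_y-D_y))^\vee$ produced by dualizing \eqref{eq:Kinf} locally, using the description of $F$ and of $e$ in \eqref{eq:desce}), and then show by an explicit computation that the resulting \emph{lines} $\Lambda(R_y)\subset\P H^0(\Omega_{P_y}|_{\delta_y})$ span the whole projective space. The tangent-line genericity you invoke (each $T_{R_y}|_p$ general in $T_{P_y}|_p$) suffices for the $d$-dimensional inflation to $\mathcal G$, but the passage from $\mathcal G$ down to $K$ kills $d-2$ of those directions, and the remaining line depends on $R_y$ in a way that is not captured by the individual tangent lines; that is why the paper resorts to the coordinate calculation in \autoref{lem:lingen}.
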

For the proof of \autoref{prop:h1k}, we need to make sure that the defining quotient in \eqref{eqn:defquotientMK} is sufficiently general so that we can apply \autoref{prop:lingen_1}.
This amounts to a purely geometric non-degeneracy lemma about rational normal curves in $\P^{d-1}$, which we now formulate.
Let $P = \P^{d-1}$.
Let $\delta \subset P$ be a set of $d$ distinct points whose linear span is $P$, and let $H \subset P$ be a hyperplane disjoint from $\delta$.
Let $R \subset P$ be a rational normal curve containing $\delta$ and set $D = H \cap R$.
Note that $P$, $\delta$, $H$, and $R$ correspond to $P_y$, $\delta_y$, $H_y$, and $R_y$ in the original setup.
Consider the map
\begin{equation}\label{eq:infmap}
  a \from H^0\left(\Omega_{P}\big|_\delta\right) \to H^0(\O_R(\delta-D))^\vee
\end{equation}
obtained as in \eqref{eqn:defquotientMK}.
This map defines a line $\Lambda(R) \subset \P H^0(\Omega_P | _\delta)$. (Recall that our projectivizations parametrize quotients.)
\begin{lemma}\label{lem:lingen}
  The linear span of the union of the lines $\Lambda(R)$ for all possible choices of $R$ is the entire projective space $\P H^0(\Omega_P |_\delta)$.
\end{lemma}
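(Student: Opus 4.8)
The plan is to prove the equivalent dual statement. Put $V = H^0(\Omega_P|_\delta) = \bigoplus_{p \in \delta}\Omega_P|_p$. Since the map $a = a_R$ is surjective, $\Lambda(R)$ is the image of $\P\big(H^0(\O_R(\delta-D))^\vee\big) \hookrightarrow \P V$ induced by $a_R$; with the convention that $\P(-)$ parametrizes quotients, the linear subspace $\Lambda(R) \subset \P V$ is annihilated exactly by $\ker a_R \subset V$, and a collection of such lines spans $\P V$ if and only if the intersection of the corresponding kernels is $0$. So it suffices to show: if $s \in V$ satisfies $a_R(s) = 0$ for every rational normal curve $R \supset \delta$, then $s = 0$. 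To compute, reuse the explicit parametrization of rational normal curves through $\delta$ introduced earlier: choose homogeneous coordinates $[Y_1 : \dots : Y_d]$ on $P = \P^{d-1}$ with $\delta = \{p_1, \dots, p_d\}$ the coordinate points, and (using that $H$ misses $\delta$, after rescaling the $Y_i$) $H = \{\sum_i Y_i = 0\}$. After a suitable choice of parameter on its $\P^1$, every rational normal curve through $\delta$ is of the form $R = R_{a,b}\colon x \mapsto \big[\tfrac{b_1 F}{x-a_1} : \dots : \tfrac{b_d F}{x-a_d}\big]$ with $F = \prod_j(x-a_j)$, the $a_i$ distinct, the $b_j$ nonzero, and $p_i \leftrightarrow x = a_i$; hence it is enough to show $a_{R_{a,b}}(s) = 0$ for all such $(a,b)$ forces $s = 0$.

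Next I would compute $a_R(s)$ on this family. Write $s_i = \sum_{j \neq i} s_{ij}\, d(Y_j/Y_i)\big|_{p_i}$. Restricting $Y_j/Y_i = \tfrac{b_j}{b_i}\cdot\tfrac{x-a_i}{x-a_j}$ to $R$ and differentiating at $x = a_i$ gives $s_i|_R = \sigma_i\, dx|_{a_i}$ with $\sigma_i = \tfrac1{b_i}\sum_{j\neq i}\tfrac{b_j s_{ij}}{a_i-a_j}$. Identifying $H^0(\O_R(\delta))$ with the span of $1, \tfrac1{x-a_1}, \dots, \tfrac1{x-a_d}$, the functional produced from $(\sigma_i\, dx|_{a_i})_i$ by the contraction-and-trace sends $\sigma \mapsto \sum_i \sigma_i\,\Res_{a_i}(\sigma\, dx)$. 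Now $a_R(s) = 0$ precisely says this functional annihilates $W_R = s_D\cdot H^0(\O_R(\delta-D))$; in particular it annihilates the single section $w = \sum_j \tfrac{b_j}{x-a_j}$, whose zero divisor on $R$ is exactly $D = H \cap R$. Since $\Res_{a_i}(w\, dx) = b_i$, pairing the functional against $w$ yields the one scalar equation
\[ \sum_i \sigma_i b_i \;=\; \sum_{j} b_j \sum_{i \neq j}\frac{s_{ij}}{a_i - a_j} \;=\; 0. \]

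To finish, impose this identity for all admissible $(a,b)$. Varying the $b_j$ forces $\sum_{i \neq j}\tfrac{s_{ij}}{a_i-a_j} = 0$ for each fixed $j$ and all distinct $a_i$. Varying the $a_i$ then forces every $s_{ij} = 0$: fix $j$, and if some $s_{i_0 j} \neq 0$, let $a_{i_0} \to a_j$ while keeping the other $a_k$ bounded away from $a_j$, so the $i_0$-term blows up while the remaining terms stay bounded, a contradiction. Hence $s = 0$, which is the desired dual statement, and therefore the lines $\Lambda(R)$ span $\P H^0(\Omega_P|_\delta)$.

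The computation itself is routine partial fractions, so the only real obstacles are bookkeeping. First, one must verify carefully that $\Lambda(R)$ corresponds to the subspace $\ker a_R$ in exactly the sense that makes the span-versus-intersection dualization legitimate; this rests on the quotient convention for $\P(-)$ and on $a_R$ being surjective (the latter is asserted just before the lemma). Second, one must confirm that every rational normal curve through $\delta$ really does arise as some $R_{a,b}$ — equivalently, that a degree-$(d-1)$ parametrization hitting the $d$ coordinate points has homogeneous components $c_k \prod_{i\neq k}(x-a_i)$ after reparametrizing so that none of the $d$ special parameter values is at infinity — so that "for all $R$" is the same as "for all $(a,b)$." I expect verifying these two points cleanly, rather than the algebra, to be the main work; note also that only one section $w \in W_R$ was used, and pairing against the second generator of $W_R$ would give a second, equally sufficient, set of identities.
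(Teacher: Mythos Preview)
Your proof is correct and follows essentially the same route as the paper: both choose the same coordinates on $P$ and the same parametrization $R_{a,b}$ of rational normal curves through $\delta$, compute the pairing explicitly, and reduce to the linear independence of the resulting rational functions in $(a,b)$; you simply phrase this dually (intersection of the kernels $\ker a_R$ is zero) and spell out the final ``vary $b$, then vary $a$'' step that the paper leaves as ``easy to check.'' One small reassurance regarding your closing caveats: your worry about whether \emph{every} $R$ arises as some $R_{a,b}$ is unnecessary, since showing that the span over the subfamily $\{R_{a,b}\}$ is already all of $\P V$ a fortiori gives the claim for all $R$.
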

\begin{proof}
  The proof is by explicit calculation.
  As done in the beginning of \autoref{sec:smoothing}, pick homogeneous coordinates $[Y_1: \dots: Y_{d}]$ on $\P^{d-1}$ such that $\delta = \{\delta_1, \dots, \delta_d\}$ is the set of coordinate points---that is
  \[ \delta_i = [0:\dots : 0 : 1 : 0 : \dots : 0] \quad \text{(1 in $i$th place)}\]
  ---and such that the hyperplane $H$ is defined by
  \[ H = \{ Y_1 + \dots + Y_{d} = 0 \}.\]
  Let $b_1, \dots, b_d \in \k^\times$ and $a_1, \dots ,a_d \in \k$ be arbitrary constants with $a_i \neq a_j$ for $i \neq j$.
  Consider the rational normal curve $R \subset \P^{d-1}$ given parametrically by
  \[ \gamma \from x \mapsto \left[\frac{b_1 F} {x-a_1}: \dots : \frac{b_d F}{x-a_d}\right],\]
  where $F = (x-a_1) \cdots (x-a_d)$.

  Let $i, j \in \{1, \dots, d \}$ with $i \neq j$.
  Define $\omega(i,j) \in H^0(\Omega_P|_\delta)$ by
  \[ \omega(i,j)\big|_{\delta_\ell} = \begin{cases} d(Y_i / Y_j) & \text{if $\ell = j$,} \\ 0 &\text{if $\ell \neq j$.}
    \end{cases}
  \]
  See that $\{\omega(i,j)\}$ is a basis of $H^0(\Omega_P|_\delta)$.
  Set $G = \sum b_i F/(x-a_i)$; note that this is the pullback of the defining equation of $H$ to $R$.
  Then we have
  \[ H^0(\O_R(\delta-D)) = \left\{ \frac{(ux+v)G}{F} \mid u, v \in k \right \}.\]
  The map in \eqref{eq:infmap}, viewed as 
  \[ H^0\left(\Omega_P|_\delta\right) \otimes H^0(\O_R(\delta-D)) \to \k,\]
  takes the following explicit form
  \[ \omega(i,j) \otimes \frac{(ux+v)G}{F} \mapsto \frac{b_i (ua_j+v)}{a_j-a_i}.\]
  In the $d(d-1)$ homogeneous coordinates on $\P H^0(\Omega_P|_\delta)$ corresponding to the basis $\{\omega(i,j)\}$, the line $\Lambda(R)$ is given by
  \[ \Lambda(R) = \left\{\left[\frac{b_i (ua_j+v)}{a_j-a_i}\right]_{1 \leq i \neq j \leq d} \mid
      [u:v] \in \P^1 \right \}.\]
  It is easy to check that the $d(d-1)$ rational functions $\frac{b_i (ua_j+v)}{a_j-a_i}$ in the $(2d+2)$ variables $a_1, \dots, a_d$, $b_1, \dots, b_d$, $u$, and $v$ are $\k$-linearly independent.
  Therefore, the linear span of $\bigcup_R \Lambda(R)$ is the entire projective space.
\end{proof}

\begin{proof}[Proof of \autoref{prop:h1k}]
  By \autoref{lem:lingen} and \autoref{prop:lingen_1}, if $y \in Y$ and $R_y$ are general, then the inflation of $\phi_*M$ at $y$ given by $K$ leads to a non-zero decrease in $h^1$.
  As a result, if $|S| \geq h^1(M)$, and $S \subset Y$ and $R_y$ are general, then $H^1(K) = 0$.
\end{proof}

We now have the tools to prove that $H^1(Z, N_{Z/\widetilde{P}}) = 0$.
\begin{proposition}
  \label{prop:h1n}
  If the size of $S$ is large, its points are general, and the rational normal curves $R_y$ are general, then we have $H^1(Z, N_{Z/\widetilde{P}}) = 0$.
\end{proposition}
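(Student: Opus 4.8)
\emph{Plan.} The idea is to run the normalization sequence of $Z$ and reduce the vanishing to three facts that are already in hand: that $\nu^*N_{Z/\widetilde P}$ has vanishing $H^1$ on every component of $Z^\nu$, that the vertical summand $N_{R_S/P_S}\cong\O(d+1)^{d-2}$ of \autoref{lem:NR} is very positive, and that $H^1(Y,K)=0$ by \autoref{prop:h1k}. First I would write down the short exact sequence
\[ 0\longrightarrow N_{Z/\widetilde P}\longrightarrow \nu_*\nu^*N_{Z/\widetilde P}\longrightarrow \bigoplus_{p\,\in\,\gamma\cup\delta_S} N_{Z/\widetilde P}\big|_p\longrightarrow 0, \]
which is valid because $Z$ is nodal, hence a local complete intersection in the smooth variety $\widetilde P$, so $N_{Z/\widetilde P}$ is locally free of rank $d-1$ and the projection formula applies. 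Since $\nu$ is finite, $H^1\bigl(Z,\nu_*\nu^*N_{Z/\widetilde P}\bigr)=H^1\bigl(Z^\nu,\nu^*N_{Z/\widetilde P}\bigr)$, which vanishes by \autoref{lem:NXH1} on the components of $X^\nu$ and by \autoref{lem:NR} on each $R_y$ (where $\nu^*N_{Z/\widetilde P}\big|_{R_y}\cong\O(d+1)^{d-2}\oplus\O(1)$ has no higher cohomology on $\P^1$). So the long exact sequence reduces the proposition to the surjectivity of the restriction map $H^0\bigl(Z^\nu,\nu^*N_{Z/\widetilde P}\bigr)\to \bigoplus_{p\in\gamma\cup\delta_S} N_{Z/\widetilde P}\big|_p$.

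Next I would dispose of the ``vertical'' directions. Writing $Z^\nu=X^\nu\sqcup R_S$ and using the splitting of \autoref{lem:NR} on each $R_y$, the domain contains the summand $H^0\bigl(R_S,N_{R_S/P_S}\bigr)=\bigoplus_y H^0\bigl(R_y,\O(d+1)^{d-2}\bigr)$, which maps to $0$ at the nodes $\gamma$ and evaluates onto $\bigoplus_{p\in\delta_S}V_p$, where $V_p\subset N_{Z/\widetilde P}\big|_p$ denotes the image of $N_{R_S/P_S}\big|_p$; here one uses that $\delta_y$ has $d$ points and $H^1\bigl(\P^1,\O(d+1)(-\delta_y)\bigr)=0$. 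Hence, working modulo $\bigoplus_{p\in\delta_S}V_p$, it remains to prove that
\[ H^0\bigl(X^\nu,\nu^*N_{Z/\widetilde P}\big|_{X^\nu}\bigr)\ \oplus\ H^0(R_S,F)\ \longrightarrow\ \Bigl(\bigoplus_{p\in\gamma}N_{Z/\widetilde P}\big|_p\Bigr)\ \oplus\ \Bigl(\bigoplus_{p\in\delta_S}N_{Z/\widetilde P}\big|_p\big/V_p\Bigr) \]
is surjective.

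Finally I would match this with the sheaf $K$. The compatibility of the maps $a$ and $b$ in \eqref{eq:inftogether} (they are induced by one map $N_{Z/\widetilde P}\to N_{p/R}\otimes N_{p/X}$) shows that $V_p$ equals the image of $N_{X/\widetilde P}\big|_p$, so $N_{Z/\widetilde P}\big|_p/V_p\cong N_{p/R}\otimes N_{p/X}=F\big|_p$. Under this identification the map $H^0(R_S,F)\to\bigoplus_{p\in\delta_S}F\big|_p$ is plain evaluation, whose cokernel is $H^0(Y,\coker\phi_*e)$ by the injectivity of $\phi_*e$; and, identifying $H^0\bigl(X^\nu,\nu^*N_{Z/\widetilde P}\big|_{X^\nu}\bigr)$ with $H^0\bigl(Y,\phi_*\chi^*(N_{Z/\widetilde P}\big|_X)\bigr)$, the map out of it is the $H^0$ of the map in \eqref{eq:defK2}: restriction of sections to $\Gamma$ in the first coordinate, and the quotient map $b$ in the second, post-composed with the surjection onto $\coker\phi_*e$. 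Therefore the cokernel of the displayed map above is a quotient of the cokernel of $H^0\bigl(Y,\phi_*\chi^*(N_{Z/\widetilde P}\big|_X)\bigr)\to H^0\bigl(Y,\phi_*(\chi^*N_{Z/\widetilde P}\big|_\Gamma)\bigr)\oplus H^0(Y,\coker\phi_*e)$ (using the surjection $\bigoplus_{p'\in\Gamma}\chi^*N_{Z/\widetilde P}\big|_{p'}\twoheadrightarrow\bigoplus_{p\in\gamma}N_{Z/\widetilde P}\big|_p$ given by differences of branch values), and this last cokernel vanishes by \eqref{eq:defK2}, the long exact sequence, and $H^1(Y,K)=0$. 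This gives the required surjectivity, hence $H^1(Z,N_{Z/\widetilde P})=0$.

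The step I expect to be the main obstacle is this last one: the bookkeeping that identifies the abstract quotient $N_{Z/\widetilde P}\big|_p/V_p$, together with the ``difference at the nodes $\gamma$'' gluing, with the data built into $K$, and that checks that the vertical summand on $R_S$ genuinely decouples so that the residual problem is precisely the one controlled by \autoref{prop:h1k}. The cohomological vanishings themselves fall out immediately from \autoref{lem:NR}, \autoref{lem:NXH1}, and \autoref{prop:h1k}.
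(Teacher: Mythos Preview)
Your proposal is correct and follows essentially the same route as the paper's proof: both run the normalization sequence, kill $H^1(\nu^*N_{Z/\widetilde P})$ via \autoref{lem:NR} and \autoref{lem:NXH1}, peel off the vertical summand $N_{R_S/P_S}\cong\O(d+1)^{d-2}$ using its positivity, and then identify the residual surjectivity problem with the $H^0$-surjectivity coming from \eqref{eq:defK2} and $H^1(K)=0$. Your presentation is more compressed (the paper writes out two explicit commutative diagrams where you argue in words), and one small point you leave implicit is that the identification $N_{Z/\widetilde P}\big|_p/V_p\cong N_{p/R}\otimes N_{p/X}$ uses not only the compatibility of $a$ and $b$ but also that $e|_p$ is an isomorphism (since $D_S$ is disjoint from $\delta_S$); this is immediate from \eqref{eq:desce} and the paper uses it in the same way.
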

\begin{proof}
  We have the exact sequence
  \[ 0 \to N_{Z /\widetilde{P}} \to \nu_*\nu^* N_{Z/\widetilde{P}} \to N_{Z /\widetilde{P}}\big |_{\gamma \cup \delta_S} \to 0.\]
  The long exact sequence on cohomology gives
  \[ H^0\left(\nu^* N_{Z/\widetilde{P}}\right) \to H^0\left(N_{Z/\widetilde{P}}\big|_{\gamma \cup \delta_S}\right) \to H^1\left(N_{Z/\widetilde{P}}\right) \to H^1\left(\nu^* N_{Z/\widetilde{P}}\right) \to 0.\]
  By \autoref{lem:NR}, we have $H^1(\nu^* N_{Z/\widetilde{P}}|_{R_y}) = 0$.
  By \autoref{lem:NXH1}, we have $H^1(\nu^* N_{Z/\widetilde{P}}|_{X^\nu}) = 0$.
  By combining the two, we get $H^1(\nu^* N_{Z/\widetilde{P}}) = 0$.

  We now show that the map
  \begin{equation}\label{eq:tosurject}
   \nu_{*} \nu^* N_{Z/\widetilde{P}} \to N_{Z/\widetilde{P}}\big|_{\gamma \cup \delta_S}
  \end{equation}
  is surjective on global sections.
  Note that we have a decomposition
  \[ \nu^* N_{Z/\widetilde{P}} = \nu^* \left(N_{Z/\widetilde{P}}\big|_X\right) \bigoplus \nu^* \left(N_{Z/\widetilde{P}}\big|_{R_S}\right).\]
  Furthermore, by \autoref{lem:NR}, $\nu^* N_{Z/\widetilde{P}}|_{R_S}$ is decomposed by the split exact sequence
  \[ 0 \to N_{R_S/P_S} \to \nu^* N_{Z/\widetilde{P}}\big|_{R_S} \to F \to 0.\]
  Consider the diagram of sheaves on $Z$
  \[
    \begin{tikzcd}
      N_{R_S/P_S} \arrow[hookrightarrow]{r}\arrow{d}{q}& \nu_*\nu^* \left(N_{Z/\widetilde{P}}\big|_X\right) \bigoplus \nu_*\nu^*\left(N_{Z/\widetilde{P}}\big|_{R_S}\right) \arrow[two heads]{r}\arrow{d}{r}& \nu_*\nu^* \left(N_{Z/\widetilde{P}}\big|_X\right) \bigoplus F \arrow{d}{s}\\
      N_{R_S/P_S}\big|_{\delta_S} \arrow[hookrightarrow]{r}&       N_{Z/\widetilde{P}}\big|_\gamma \bigoplus N_{Z/\widetilde{P}}\big|_{\delta_S} \arrow[two heads]{r}&       N_{Z/\widetilde{P}}\big|_\gamma \bigoplus  N_{\delta_S/X} \otimes N_{\delta_S/R_S}.
    \end{tikzcd}
  \]
  By \autoref{lem:NR}, the bundle $N_{R_S/P_S}$ is positive enough for $q$ to be surjective on global sections.
  Therefore, to prove that $r$ is surjective on global sections, it suffices to prove the same for $s$.
  Recall our notation $e$ for the map $F \to N_{\delta_S/X} \otimes N_{\delta_S/R_S}$.
  We have the following diagram of sheaves on $Y$
  \[
    \begin{tikzcd}
      \phi_*F \arrow[hookrightarrow]{r}\arrow[equal]{d}& \phi_*\left(\nu^* \left(N_{Z/\widetilde{P}}\big|_X\right) \bigoplus F\right) \arrow[two heads]{r}\arrow{d}& \phi_*\left(\nu^* \left(N_{Z/\widetilde{P}}\big|_X\right)\right) \arrow{d} \\
      \phi_* F \arrow[hookrightarrow]{r}& \phi_*\left(N_{Z/\widetilde{P}}\big|_\gamma \bigoplus  N_{\delta_S/X} \otimes N_{\delta_S/R_S}\right) \arrow[two heads]{r}& \phi_*\left(N_{Z/\widetilde{P}}\big|_\gamma  \right) \bigoplus \coker \phi_* e,
    \end{tikzcd}
  \]
  where we have abused notation somewhat to denote both maps $Z^\nu \to Y$ and $Z \to Y$ by the same letter $\phi$.
  From the diagram, we see that it suffices to prove that 
  \begin{equation}\label{eq:tosurject3}
    \phi_* \left(\nu^* \left(N_{Z/\widetilde{P}}\big|_X\right)\right) \to \phi_*\left(N_{Z/\widetilde{P}} \big|_\gamma\right) \oplus \coker \phi_*e
  \end{equation}
  is surjective on global sections.
  As a consequence \eqref{eq:defK2} of the definition of $K$, we have the exact sequence
  \[ 0 \to K \to \phi_* \nu^* \left(N_{Z/\widetilde{P}}\big|_X\right) \to \phi_*\left(\nu^*N_{Z/\widetilde{P}} \big|_\Gamma\right)\oplus \coker \phi_*e \to 0.\]
  We have replaced $\chi^*$ in \eqref{eq:defK2} by $\nu^*$ above, but this is harmless as the pullbacks are in any case supported on $X^\nu$.
  By \autoref{prop:h1k}, we may assume that $H^1(K) = 0$.
  Therefore, we get that the map
  \begin{equation}\label{eq:surject1}
    \phi_* \nu^* \left(N_{Z/\widetilde{P}}\big|_X\right)\to \phi_*\left(\nu^*N_{Z/\widetilde{P}} \big|_\Gamma\right) \oplus \coker \phi_*e
  \end{equation}
  is surjective on global sections.
  Since
  \[ H^0\left(\nu^*N_{Z/\widetilde{P}} \big|_\Gamma\right) \to H^0\left(N_{Z/\widetilde{P}} \big|_\gamma\right)\]
  is clearly surjective, we conclude that \eqref{eq:tosurject3} is surjective on global sections.
  The proof of \autoref{prop:h1n} is now complete.
\end{proof}
\begin{remark}\label{rem:surjgamma}
  From the surjection \eqref{eq:surject1}, we observe that the map
  \[\nu_* \nu^* N_{Z/\widetilde{P}} \to \nu_*\left(\nu^* N_{Z/\widetilde{P}}\big|_\Gamma \right)\oplus N_{Z/\widetilde{P}}\big|_{\delta_S},\]
  is surjective on global sections.
  This is stronger than what was required for \autoref{prop:h1n}; it will be useful later.
\end{remark}

The following proposition considers the effect of enlarging $S$.
Let $S^+ = S \cup \{y \}$, where $y \in Y \setminus S$ is any point over which $X \to Y$ is \'etale.
Denote by the superscript $+$ the analogues for $S^+$ of all the constructions done for $S$.
\begin{proposition}\label{prop:S+}
  Suppose we have $H^1(Z, N_{Z/\widetilde{P}}) = 0$.
  Then we also have $H^1(Z^+, N_{Z^+/ \widetilde{P}^+}) = 0$.
\end{proposition}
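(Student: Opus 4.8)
The plan is to mimic the proof of \autoref{prop:h1n}, but with $H^1(Z,N_{Z/\widetilde P})=0$ fed in as a hypothesis rather than re-derived. First I would observe that since $y\notin S$, we have $\widetilde P^+=\operatorname{Bl}_{H_y}\widetilde P$, and this blow-up is an isomorphism over a neighbourhood of $Z$: the curve $X$ lies in $\Tot(E)$, hence away from $H\supset H_y$, and each $R_{y'}$ with $y'\in S$ lies in $P_{y'}$, which lies over $y'\ne y$. So I regard $Z\subset\widetilde P^+$ with $N_{Z/\widetilde P^+}=N_{Z/\widetilde P}$, and $Z^+=Z\cup_{\delta_y}R_y$, where $\delta_y=R_y\cap X$ consists of the $d$ points of $X$ over $y$, each a smooth point of $Z$ and a node of $Z^+$. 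Write $N'=N_{Z^+/\widetilde P^+}$, a locally free sheaf since $Z^+$ is nodal, hence lci in the smooth variety $\widetilde P^+$. Normalizing $Z^+$ at $\delta_y$ gives $Z\sqcup R_y$ and the exact sequence $0\to N'\to N'|_Z\oplus N'|_{R_y}\to N'|_{\delta_y}\to 0$; from its long exact cohomology sequence, $H^1(Z^+,N')=0$ follows once I establish (a) $H^1(Z,N'|_Z)=0$, (b) $H^1(R_y,N'|_{R_y})=0$, and (c) $H^0(N'|_Z)\oplus H^0(N'|_{R_y})\to\bigoplus_{p\in\delta_y}N'|_p$ is surjective.

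Parts (a) and (b) are quick. For (a), the node sequence \eqref{eq:inftogether}, applied with the two curves $R_y$ and $Z$ inside $\widetilde P^+$, exhibits $N'|_Z$ as a degree-$d$ inflation of $N_{Z/\widetilde P^+}=N_{Z/\widetilde P}$ supported at $\delta_y$; since the proof of \autoref{prop:inf_inf} uses only that the cokernel has zero-dimensional support, it applies verbatim over the nodal curve $Z$ and gives $H^1(N'|_Z)\le H^1(N_{Z/\widetilde P})=0$. For (b), \autoref{lem:NR} applied to $Z^+$ and $y\in S^+$ gives $N'|_{R_y}\cong\O(d+1)^{d-2}\oplus\O(1)$, which has no $H^1$.

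The heart of the matter is (c), and here is how I would run it. Restricting the split sequence $0\to N_{R_y/P_y}\to N'|_{R_y}\to F_y\to 0$ of \autoref{lem:NR} (with $N_{R_y/P_y}\cong\O(d+1)^{d-2}$ and $F_y\cong\O(1)$) to a point $p\in\delta_y$ produces a hyperplane $\operatorname{Vert}_p:=\ker(N'|_p\twoheadrightarrow F_y|_p)\subset N'|_p$ of dimension $d-2$. The common map of \eqref{eq:inftogether} restricts at $p$ to $c_p\from N'|_p\to N_{p/R_y}\otimes N_{p/Z}$, and by the explicit description of the map $e$ in \eqref{eq:desce}, $c_p$ factors as $N'|_p\twoheadrightarrow F_y|_p\xrightarrow{e_p}N_{p/R_y}\otimes N_{p/Z}$ where $e_p$ is an isomorphism: indeed $p\notin D_y$ (the points $\delta_y\subset X$ are disjoint from the exceptional divisor, as $R_y$ is transverse to $H_y$), so the inclusion $\O_{R_y}(\delta_y-D_y)\hookrightarrow\O_{R_y}(\delta_y)$ is an isomorphism near $p$ and the residue at $p$ is an isomorphism. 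Hence $\ker c_p=\operatorname{Vert}_p$. Now I assemble: $H^0(N_{R_y/P_y})\to\bigoplus_p N_{R_y/P_y}|_p=\bigoplus_p\operatorname{Vert}_p$ is surjective since $\O_{R_y}(d+1)$ separates the $d$ points of $\delta_y$; and the bottom row of \eqref{eq:inftogether} for $Z^+$, namely $0\to N_{Z/\widetilde P^+}\to N'|_Z\xrightarrow{b}\bigoplus_p N_{p/R_y}\otimes N_{p/Z}\to 0$, shows (using $H^1(N_{Z/\widetilde P^+})=H^1(N_{Z/\widetilde P})=0$) that $H^0(N'|_Z)\xrightarrow{b}\bigoplus_p N_{p/R_y}\otimes N_{p/Z}$ is surjective; since $b$ is restriction to $\delta_y$ followed by $\bigoplus_p c_p$ and $\ker c_p=\operatorname{Vert}_p$, this says $H^0(N'|_Z)\to\bigoplus_p N'|_p/\operatorname{Vert}_p$ is surjective. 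Given $(w_p)_p\in\bigoplus_p N'|_p$, I choose $s_Z\in H^0(N'|_Z)$ with $s_Z(p)\equiv w_p\pmod{\operatorname{Vert}_p}$ for all $p$, then $s_Z(p)-w_p\in\operatorname{Vert}_p$, so I choose $s_R\in H^0(N_{R_y/P_y})\subset H^0(N'|_{R_y})$ with $s_R(p)=s_Z(p)-w_p$; the pair $(s_Z,s_R)$ maps to $(w_p)_p$, giving (c).

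The main obstacle is exactly the identification $\operatorname{Vert}_p=\ker c_p$ in (c): one must check that the positive ``vertical'' summand $N_{R_y/P_y}$ fills in precisely the subspace of $N'|_p$ that the gluing map $c_p$ forgets, which reduces to the transversality of $R_y$ to the hyperplane at infinity (so that $\delta_y\cap D_y=\emptyset$ and $e_p$ is an isomorphism). Once this is in place, surjectivity is a formal matter of playing the abundant positivity of $N_{R_y/P_y}$ on the $R_y$-side against the input vanishing $H^1(N_{Z/\widetilde P})=0$ on the $Z$-side; the cohomology vanishings (a) and (b) are routine consequences of \autoref{prop:inf_inf} and \autoref{lem:NR}.
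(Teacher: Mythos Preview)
Your proof is correct and follows essentially the same approach as the paper's: both use the partial normalization sequence at $\delta_y$, establish the $H^1$-vanishing on $Z$ and $R_y$ separately via \autoref{prop:inf_inf} and \autoref{lem:NR}, and then prove surjectivity of the restriction map by handling the ``vertical'' part $N_{R_y/P_y}|_{\delta_y}$ from the $R_y$-side and the one-dimensional quotient $N_{\delta_y/X}\otimes N_{\delta_y/R_y}$ from the $Z$-side using the inflation sequence and the hypothesis $H^1(N_{Z/\widetilde P})=0$. Your treatment is somewhat more explicit in two places---you spell out why $\widetilde P^+\to\widetilde P$ is an isomorphism near $Z$, and you verify the identification $\ker c_p=\operatorname{Vert}_p$ via the description \eqref{eq:desce} of $e$ and the transversality $\delta_y\cap D_y=\varnothing$---whereas the paper simply invokes the diagram \eqref{eq:biginf} for $Z^+$; but these are expository differences rather than a different argument.
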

\begin{proof}
  By construction, we have $Z^+ = Z \cup R_y$.
  Let $\mu \from Z \sqcup R_y \to Z^+$ be the partial normalization.
  We have the short exact sequence
  \[ 0 \to N_{Z^+/\widetilde{P}^+} \to \mu_*\mu^* N_{Z^+/\widetilde{P}^+} \to  N_{Z^+/\widetilde{P}^+} |_{\delta_y} \to 0.\]
  The injection $N_{Z/\widetilde{P}} \to \mu^*N_{Z^+/\widetilde{P}^+}|_Z$ and the hypothesis $H^1(N_{Z/\widetilde{P}}) = 0$ implies that
  \[H^1\left(\mu^*N_{Z^+/\widetilde{P}^+}\big|_Z\right) = 0.\]
  \autoref{lem:NR} implies that $H^1(N_{Z^+/\widetilde{P}^+}|_{R_y}) = 0$.
  By combining the two, we get 
  \[ H^1\left(\mu_*\mu^* N_{Z^+/\widetilde{P}^+}\right) = 0.\]
  To finish the proposition, it remains to prove that
  \[ H^0\left(\mu^* N_{Z^+/\widetilde{P}^+}\right) \to H^0\left(N_{Z^+/\widetilde{P}^+}\big|_{\delta_y}\right)\]
  is surjective.
  Recall that
  \begin{align*}
    \mu^* N_{Z^+/\widetilde{P}^+} &= N_{Z^+/\widetilde{P}^+} \big|_{Z} \oplus N_{Z^+/\widetilde{P}^+} \big|_{R_y} \\
    &= N_{Z^+/\widetilde{P}^+} \big|_{Z} \oplus N_{R_y/P_y} \oplus F\big|_{R_y}.
  \end{align*}
  We know that $N_{R_y/P_y} \to N_{R_y/P_y}|_{\delta_y}$ is surjective on global sections.
  Therefore, it suffices to prove that the map
  \begin{equation}\label{eq:tosurj1}
    N_{Z^+/\widetilde{P}^+} \big|_{Z} \to N_{\delta_y/X} \otimes N_{\delta_y/R_y} = \coker\left(N_{R_y/P_y}\big|_{\delta_y} \to N_{Z/\widetilde{P}}\big|_{\delta_y}\right)
  \end{equation}
  is surjective on global sections.
  But the exact sequence
  \[ 0 \to N_{Z/\widetilde{P}} \to N_{Z^+/\widetilde{P}^+} \big|_{Z} \to N_{\delta_y/X} \otimes N_{\delta_y/R_y} \to 0\]
  analogous to the bottom row of \eqref{eq:biginf} and the vanishing of $H^1(N_{Z/\widetilde{P}})$ imply that \eqref{eq:tosurj1} is indeed surjective on global sections.
\end{proof}
\begin{remark}\label{rem:surjhoriz}
  In the proof of \autoref{prop:S+}, we showed that
  \[ N_{Z^+/\widetilde{P}^+} \big|_{Z} \to N_{\delta_y/X} \otimes N_{\delta_y/R_y} \]
  is surjective on global sections.
  Again, this is stronger than what was required for \autoref{prop:S+}; it will be useful later.
\end{remark}

\begin{proposition}
  \label{prop:key_singular}
  Suppose the size $n$ of $S$ is large, its points are general, and the rational normal curves $R_y$ are general.
  Then
  \begin{enumerate}
  \item the Hilbert scheme of subschemes of $\widetilde{P}$ is smooth at $[Z]$;
  \item $Z$ is a flat limit of smooth curves in $\widetilde{P}$.
  \end{enumerate}
  Furthermore, if $n$ is sufficiently large, then the set $S$ can be chosen so that $\O_Y(S)$ is  isomorphic to any prescribed line bundle of degree $n$ on $Y$.
\end{proposition}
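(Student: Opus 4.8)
The plan is to deduce (1) and (2) from the vanishing $H^1(Z,N_{Z/\widetilde P})=0$ of \autoref{prop:h1n} together with standard embedded deformation theory, and to obtain the control over $\O_Y(S)$ by a separate base-point-freeness argument. First, since $Z=X\cup_{y\in S}R_y$ is a reduced curve with at worst nodes, it is a local complete intersection in the smooth variety $\widetilde P$; hence its deformations in $\Hilb(\widetilde P)$ are governed by $N_{Z/\widetilde P}$, with tangent space $H^0(Z,N_{Z/\widetilde P})$ and obstructions in $H^1(Z,N_{Z/\widetilde P})$. By \autoref{prop:h1n} --- applicable once $|S|$ is large with $S$ and the $R_y$ general --- we have $H^1(Z,N_{Z/\widetilde P})=0$, so $\Hilb(\widetilde P)$ is smooth at $[Z]$ of dimension $h^0(Z,N_{Z/\widetilde P})$. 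This proves (1); in particular $[Z]$ lies on a unique component $\mathcal H$ of $\Hilb(\widetilde P)$, and every embedded first-order deformation of $Z$ integrates.

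For (2) I would use the standard criterion for smoothing nodal subvarieties of a smooth variety (cf.\ \cite{ser:06}). As $Z$ has only nodes, $T^1_Z=\shExt^1_{\O_Z}(\Omega_Z,\O_Z)$ is a skyscraper with one-dimensional stalk at each node $p\in\gamma\cup\delta_S$, there is a canonical surjection $N_{Z/\widetilde P}\twoheadrightarrow T^1_Z$, and --- given $H^1(N_{Z/\widetilde P})=0$ --- it suffices to show $H^0(Z,N_{Z/\widetilde P})\to H^0(Z,T^1_Z)$ is surjective: a general section then smooths every node to first order, integrates by (1) to a one-parameter family $Z_t\subset\widetilde P$ with $Z_0=Z$, and since a first-order smoothing of a node integrates to an honest local smoothing while smoothness is open, the general $Z_t$ is smooth. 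To get the surjectivity it is enough, taking general linear combinations, to hit $T^1_{Z,p}$ nontrivially for each $p$ separately. For $p\in\gamma$ this comes from \autoref{rem:surjgamma}: its surjectivity onto $H^0(\nu^*N_{Z/\widetilde P}|_\Gamma)$ lets one prescribe equal, otherwise arbitrary, values of a section of $\nu^*N_{Z/\widetilde P}$ at the two preimages of each node of $\gamma$, and zero differences along $\delta_S$, so that the section descends to $N_{Z/\widetilde P}$; in fact one gets that $H^0(N_{Z/\widetilde P})$ surjects onto $\bigoplus_{p\in\gamma}N_{Z/\widetilde P}|_p$. For $p\in\delta_y$, $y\in S$, I would apply \autoref{rem:surjhoriz} with $S$ replaced by $S\setminus\{y\}$: writing $W=X\cup_{y'\neq y}R_{y'}$, so $Z=W\cup R_y$, one checks from the local picture of the node and \autoref{lem:NR} that the composite $N_{Z/\widetilde P}|_{R_y}(-\delta_y)\to N_{Z/\widetilde P}\to T^1_{Z,\delta_y}$ vanishes, whence the quotient $N_{Z/\widetilde P}\to T^1_{Z,\delta_y}$ factors through $N_{Z/\widetilde P}|_W$; \autoref{rem:surjhoriz} states precisely that $H^0(N_{Z/\widetilde P}|_W)\to H^0(T^1_{Z,\delta_y})$ is surjective, and combining this with $H^1(N_{Z/\widetilde P})=0$ and the positivity of $N_{Z/\widetilde P}|_{R_y}$ from \autoref{lem:NR} yields the needed sections of $N_{Z/\widetilde P}$. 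Reconciling the intrinsic quotient $N_{Z/\widetilde P}\to T^1_Z$ with the restriction-type quotients of \autoref{rem:surjgamma} and \autoref{rem:surjhoriz}, by chasing \eqref{eq:inftogether} and \eqref{eq:biginf}, is the one genuinely technical step here.

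For the last assertion, fix once and for all a large general set $S_0$, with general $R_y$ for $y\in S_0$, so that \autoref{prop:h1n} and the surjectivities above hold, and set $n_0=|S_0|$. Given $n\geq n_0+2g_Y+1$ and any line bundle $L$ of degree $n$, the bundle $M=L\otimes\O_Y(-S_0)$ has degree $n-n_0\geq 2g_Y+1$, so it is very ample; a general $D'\in|M|$ is a sum of $n-n_0$ distinct points disjoint from $S_0$, from the branch locus of $X\to Y$, and from the finitely many points excluded in \autoref{prop:h1n}. Take $S=S_0\sqcup\operatorname{supp}(D')$, so that $|S|=n$ and $\O_Y(S)=\O_Y(S_0)\otimes M\cong L$; adding the points of $D'$ to $S_0$ one at a time and invoking \autoref{prop:S+} --- together with \autoref{rem:surjhoriz}, which records the extra surjectivity at each newly added node --- shows that $H^1(N_{Z/\widetilde P})=0$ and the hypotheses needed for (1) and (2) persist for this $S$.

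I expect the main obstacle to be the verification in (2) that $H^0(N_{Z/\widetilde P})$ surjects onto $T^1_{Z,p}$ for every node $p$, i.e.\ translating between the obstruction-to-smoothing quotient of $N_{Z/\widetilde P}$ and the extrinsic quotients for which surjectivity of global sections was established; a secondary point is checking that the enlargement $S_0\rightsquigarrow S$ above preserves not only $H^1(N_{Z/\widetilde P})=0$ but also \autoref{rem:surjgamma} and \autoref{rem:surjhoriz}.
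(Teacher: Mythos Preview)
Your proposal is correct and follows essentially the same approach as the paper. Part (1) is identical. For (2), the paper also separates the nodes into $\gamma$ and $\delta_S$: for $\gamma$ it uses \autoref{rem:surjgamma} to deduce $H^1(N_{Z/\widetilde P}\otimes I_\gamma)=0$ (your ``descend a section with prescribed equal values'' argument is a rephrasing of this), and for $p\in\delta_y$ it removes $y$, sets $S^-=S\setminus\{y\}$, and uses the vanishing $H^1(N_{Z^-/\widetilde P^-})=0$ together with the positivity of $N_{Z/\widetilde P}|_{R_y}$ from \autoref{lem:NR}, exactly as you outline via \autoref{rem:surjhoriz}. For the final assertion the paper enlarges a generic $S$ of size $n-2g_Y$ by $2g_Y$ well-chosen points and invokes \autoref{prop:S+}; your version fixes $S_0$ and adds points from a general divisor in $|L\otimes\O_Y(-S_0)|$, which is the same mechanism with a slightly different bookkeeping of degrees. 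The technical concern you flag---reconciling the intrinsic map $N_{Z/\widetilde P}\to T^1_Z$ with the restriction-type quotients---is real but routine, and the paper handles it via the identification $\shExt^1_{\O_Z}(\Omega_Z,\O_Z)_{\delta_y}=N_{\delta_y/R_y}\otimes N_{\delta_y/X}$ and the sequences in \eqref{eq:inftogether}.
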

\begin{proof}
  Since $H^1(N_{Z/\widetilde{P}}) = 0$, we get that the Hilbert scheme of $\widetilde{P}$ is smooth at $[Z]$, proving (1).
  As a result, every first order deformation of $Z \subset \widetilde{P}$ extends to a deformation over the germ of a smooth curve.
  To show that $Z$ is the limit of smooth curves, it suffices to show that for every node $p \in Z$, the natural map $N_{Z/\widetilde{P}} \to \shExt^1_{\O_Z}(\Omega_Z, \O_Z)_p$ is surjective on global sections.
  Recall that $Z$ has two kinds of nodes: the nodes $\gamma$, which are the nodes of $X$; and the nodes $\delta_S$, which are the nodes introduced because we attached the rational normal curves.

  First we deal with the nodes $\gamma$.
  Let $\chi \from Z^\chi \to Z$ be the partial normalization at these nodes.
  Let $I_\gamma \subset \O_Z$ be the ideal sheaf of $\gamma \subset Z$.
  We have 
  \[ N_{Z/\widetilde{P}} \otimes I_{\gamma} = \chi_* \left( \chi^*N_{Z/\widetilde{P}}(-\Gamma) \right).\]
  Thus, if $\nu \from Z^\nu \to Z$ is the full normalization, we get the sequence
  \[ 0 \to N_{Z/\widetilde{P}} \otimes I_\gamma \to \nu_*\nu^* N_{Z/\widetilde{P}} \to \nu_* \left(\nu^*N_{Z/\widetilde{P}}\big|_\Gamma\right) \oplus N_{Z/\widetilde{P}}\big |_{\delta_S} \to 0.\]
  By the observation in \autoref{rem:surjgamma}, we know that 
  \[ \nu_*\nu^* N_{Z/\widetilde{P}} \to \nu_* \left(\nu^*N_{Z/\widetilde{P}}\big|_\Gamma\right) \oplus N_{Z/\widetilde{P}}\big |_{\delta_S}.\]
  is surjective on global sections.
  Therefore, we get that $H^1(N_{Z/\widetilde{P}} \otimes I_\gamma) = 0$.
  This, in turn, implies that
  \[H^0(N_{Z/\widetilde{P}}) \to H^0(N_{Z/\widetilde{P}}|_\gamma)\]
  is surjective.
  By combining with the surjection
  \[ N_{Z/\widetilde{P}}\big|_\gamma \to \shExt^1_{\O_Z}(\Omega_Z, \O_Z)_{\gamma},\]
  we conclude that $H^0(N_{Z/\widetilde{P}}) \to H^0(\shExt^1_{\O_Z}(\Omega_Z, \O_Z)_p)$ is surjective for all $p \in \gamma$.
  
  Next, we consider a node $p \in \delta_S$ lying over $y \in S$.
  We have the equality
  \[ \shExt^1_{\O_Z}(\Omega_Z, \O_Z)_{\delta_y} = N_{\delta_y/R_y}\otimes N_{\delta_y/X}.\]
  Set $S^- = S \setminus \{y\}$.
  Denote by the superscript $-$ the analogous objects for $S^-$.
  We may assume that $S$ is big enough to have $H^1(N_{Z^-/\widetilde{P}^-}) = 0$.
  Let $\mu \from Z^- \sqcup R_y \to Z$ be the partial normalization at the nodes $\delta_y$.
  We have the sequence
  \[ 0 \to N_{Z^-/\widetilde{P}^-} \to \mu^* N_{Z/\widetilde{P}}\big|_{Z^-} \to  N_{\delta_y/R_y}\otimes N_{\delta_y/X} = \shExt^1_{\O_Z}(\Omega_Z, \O_Z)_{\delta_y} \to 0\]
  analogous to the bottom row of \eqref{eq:biginf}.
  Since $H^1(N_{Z^-/\widetilde{P}^-}) = 0$, the long exact sequence in cohomology implies that
  \[ \mu^* N_{Z/\widetilde{P}}\big|_{Z^-} \to \shExt^1_{\O_Z}(\Omega_Z, \O_Z)_{\delta_y}\]
  is surjective on global sections.
  In particular,
  \begin{equation}\label{eq:sur0}
    \mu^* N_{Z/\widetilde{P}}\big|_{Z^-} \to \shExt^1_{\O_Z}(\Omega_Z, \O_Z)_{p}
  \end{equation}
  is surjective on global sections.
  
  By \autoref{lem:NR}, the map
  \begin{equation}\label{eq:sur1}
    \mu^* N_{Z/\widetilde{P}}\big|_{R_y} \to N_{Z/\widetilde{P}}\big|_p
  \end{equation}
  is surjective on global sections.

  By combining \eqref{eq:sur0} and \eqref{eq:sur1}, we see that $N_{Z/\widetilde{P}} \to \shExt^1_{\O_Z}(\Omega_Z,\O_Z)_p$ is surjective on global sections.
  We have thus taken care of both types of nodes, proving (2).
  
  It remains to prove the last statement about $\O_Y(S)$.
  For that, assume that $n$ is large enough so that the conclusions above hold for a generic $S$ of size $n - 2g_Y$.
  Then we may enlarge $S$ to a set $S^+$ by adding an appropriate set of $2g_Y$ points so that the same conclusions hold and $\O_Y(S^+)$ is isomorphic to a given line bundle of degree $n$.
\end{proof}

We now prove the key proposition.
\begin{proof}[Proof of \autoref{prop:key}]
  By \autoref{prop:key_singular}, there exists a family of smooth curves in $\widetilde{P}$ whose flat limit is $Z$.
  Let $X'$ be a general member of such a family.
  This curve satisfies the following conditions (see \autoref{fig:bldown}):
  \begin{enumerate}
  \item $\deg (X' \cdot E_y) = d-1$ for all $y \in S$,
  \item $\deg (X' \cdot P_y) = 1$ for all $y \in S$,
  \item $X' \cap \widetilde H = \varnothing$,
  \item $g(X') = g(X) + n(d-1)$,
  \item $H^1(N_{X' / \widetilde{P}}) = 0$.
  \end{enumerate}

  \begin{figure}[ht]
  \begin{tikzpicture}
    \draw
    node at (0,0) {\includegraphics{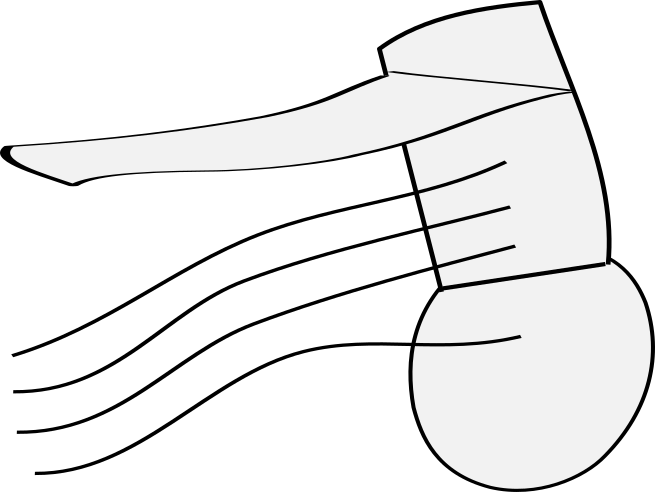}}
    node at (-3.3,-1.5) {$X'$}
    node at (-3.3, 1.0) {$\widetilde H$}
    node at (3.2, -1.0) {$P_y$}
    node at (2.8, 1.0) {$E_y$};
  \end{tikzpicture}
  \caption{A smoothing $X'$ of $X$ after attaching a large number of general rational normal curves}
  \label{fig:bldown}
\end{figure}

  Let $\widetilde{P} \to P'$ be the blowing down of all the $P_y$ for $y \in S$.
  Then $P' \to Y$ is a $\P^{d-1}$ bundle and the map $X' \to P'$ is an embedding.
  Similarly, $\widetilde H \to P'$ is also an embedding.

  We claim that the complement of $\widetilde H$ in $P'$ is isomorphic to $\Tot(E')$, where $E' = E \otimes \O_Y(S)$.

  To see this, let us recall some generalities.
  Let $V$ be a vector bundle of rank $d$ on $Y$; set $P = \P V$; and let $H \subset P$ be a divisor such that for each $y \in Y$, the fiber $H_y$ is a hyperplane in $P_y$.
  In general, the complement $P \setminus H$ is an affine space bundle over $Y$.
  If we have a section $\sigma \from Y \to P \setminus H$, then $P \setminus H \to Y$ is the total space of a vector bundle $E$.
  The bundle $E$ can be recovered from $\sigma$ as
  \[ E = N_{\sigma(Y) / P}.\]

  Coming back to our situation, let $\sigma \from Y \to P = \P(\O_Y \oplus E)$ be a section disjoint from $H$.
  Denote by $\sigma' \from Y \to P'$ the section obtained from $\sigma$ by composing with the the blow-up and blow-down rational map $\beta \from P \dashrightarrow P'$ (which is regular in a neighborhood of $\sigma(Y)$).
  Then $\sigma' \subset P'$ is disjoint from $\widetilde H \subset P'$.
  Therefore, $P' \setminus \widetilde H$ is the total space of a vector bundle.
  To identify this bundle, consider the map
  \[N_{\sigma(Y) / P} \xrightarrow{d\beta} N_{\sigma'(Y)/P'},\]
  which is an isomorphism on $Y \setminus S$ and identically zero when restricted to $S$.
  A simple local computation shows that, the cokernel is supported scheme-theoretically on $S \subset Y$.
  Therefore, we have an isomorphism
  \[ N_{\sigma(Y) / P} \otimes \O_Y(S) \simeq N_{\sigma'(Y)/P'}.\]
  Therefore, we conclude that the complement of $H'$ in $P'$ the total space of $E' = E \otimes \O_Y(S)$.
  
  Note that $X'$ and $\widetilde H$ remain disjoint in $P'$, and hence we get an embedding $X' \subset E'$.
  \autoref{lem:can} implies that $X' \subset E'$ is the canonical affine embedding.

  Next, note that we have an injection 
  \[ N_{X'/\widetilde{P}} \to N_{X'/E'} \]
  with finite quotient, supported on $\bigcup_{y \in S} X' \cap P_y$.
  Since $H^1(N_{X'/\widetilde{P}}) = 0$, we get $H^1(N_{X'/E'}) = 0$.

  Finally, by the last assertion of \autoref{prop:key_singular}, we may take $\O_Y(S)$ to be any prescribed line bundle of degree $n$ if $n$ is large enough.
\end{proof}

\subsection{The general case}
We now use the results of \autoref{sec:split} and \autoref{sec:smoothing} to deduce the main theorem.
Recall that $Y$ is a connected, projective, and smooth curve over $\k$, an algebraically closed field with $\charac \k = 0$ or $\charac \k > d$.
\begin{theorem}
  \label{thm:E}
  Let $E$ be a vector bundle on $Y$ of rank $(d-1)$.
  There exists an $n$ (depending on $E$) such that for any line bundle $L$ of degree at least $n$, there exists a smooth curve $X$ and a finite flat morphism $\phi \from X \to Y$ of degree $d$ such that $E_\phi \cong E \otimes L$.
  Furthermore, we have $H^1(X, N_{X/ E \otimes L}) = 0$, where $X \subset E \otimes L$ is the canonical affine embedding.
\end{theorem}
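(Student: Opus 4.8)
The plan is to deduce \autoref{thm:E} from the results already in hand by a two-stage degeneration argument: first reduce to the split case $E_0 = L_1 \oplus \dots \oplus L_{d-1}$, and then deform the split cover to a cover with Tschirnhausen bundle $E$ itself, using the vanishing of $H^1$ of the normal bundle to guarantee that the deformation exists and stays smooth. I would begin by applying \autoref{prop:isotrivial} to $E^\vee$ (or equivalently $E$), obtaining an isotrivial degeneration of $E$ to a bundle $E_0 = L_1 \oplus \dots \oplus L_{d-1}$ whose line-bundle summands have degrees spread out by any prescribed gap $N$. After twisting by a sufficiently positive $L$, I can arrange that $E_0 \otimes L$ satisfies the numerical hypotheses of \autoref{prop:pinching} (i.e. $\deg L_1 + \deg L \geq 2g_Y - 1$ and the successive gaps are at least $2g_Y-1$); thus $E_0 \otimes L$ is the Tschirnhausen bundle of a nodal cover $X_0 \to Y$.

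Next I would feed this nodal cover $X_0 \to Y$ into \autoref{prop:key}. This produces, after a further twist by a line bundle of sufficiently large degree, a smooth curve $X_0' \to Y$ with Tschirnhausen bundle $(E_0 \otimes L) \otimes L'$ and with $H^1(X_0', N_{X_0'/E_0'}) = 0$ for the canonical affine embedding $X_0' \subset E_0' := E_0 \otimes L \otimes L'$. Renaming $L \otimes L'$ as a single line bundle (still of degree at least some $n$ depending only on $E$ and $Y$), I now have a smooth degree $d$ cover whose Tschirnhausen bundle is the split bundle $E_0$ twisted up, with vanishing obstruction space for deformations of the canonical affine embedding.

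The final and most delicate step is to propagate the isotrivial degeneration $\mathcal E$ of $E \otimes L$ to $E_0 \otimes L$ into a deformation of the curve. Concretely: since $\mathcal E$ on $Y \times \Delta$ restricts to $E_0 \otimes L$ over $0$ and to $E \otimes L$ elsewhere, I form the relative total space $\Tot_{Y\times\Delta}(\mathcal E) \to \Delta$, inside which $X_0' \subset \Tot(E_0 \otimes L)$ sits as the fiber over $0$. Because $H^1(X_0', N_{X_0'/\Tot(E_0\otimes L)}) = 0$, the relative Hilbert scheme of this total space over $\Delta$ is smooth at $[X_0']$ and the projection to $\Delta$ is smooth there; hence $X_0'$ deforms to a curve $X_t \subset \Tot(\mathcal E_t) = \Tot(E \otimes L)$ for $t$ near $0$. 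For general $t$ the deformed curve $X_t$ is still smooth (smoothness is open), still finite of degree $d$ over $Y$ (the composite $X_t \to Y$ is finite since it is finite for $t=0$ and finiteness/properness is open on the base for this proper family), and lands in $\Tot(E\otimes L)$ in affine general position on a general fiber (an open condition satisfied at $t=0$). By \autoref{lem:can} the embedding $X_t \subset E \otimes L$ is then the canonical affine embedding and $E_{\phi_t} \cong E \otimes L$, and since $H^1(N_{X_t/E\otimes L}) = 0$ is again an open condition holding at $t = 0$, it holds for general $t$ as well. The main obstacle is making the ``relative deformation over $\Delta$'' rigorous — one must check that the total space of $\mathcal E$ is a reasonable (quasi-projective) ambient scheme over which Hilbert schemes behave, that the vanishing $H^1(N_{X_0'/\Tot(E_0 \otimes L)}) = 0$ (as opposed to $H^1$ of the normal bundle inside some projective completion) really controls deformations keeping the curve inside the open total space, and that finiteness over $Y$ and affine general position are preserved; all of these are open conditions, so the argument is essentially a semicontinuity bookkeeping once the setup is pinned down. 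The value of $n$ is the maximum of the various bounds coming from \autoref{prop:isotrivial}, \autoref{prop:pinching}, \autoref{prop:key}, and the radius of the deformation, and depends only on $E$ and $Y$.
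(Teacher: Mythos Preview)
Your proposal is correct and follows essentially the same approach as the paper: isotrivially degenerate $E$ to a split bundle $E_0$, use \autoref{prop:pinching} and \autoref{prop:key} to produce a smooth cover $X_0$ with Tschirnhausen bundle $E_0 \otimes L$ and vanishing $H^1(N_{X_0/E_0\otimes L})$, and then use smoothness of the relative Hilbert scheme of $\Tot(\mathcal E \otimes L)/\Delta$ at $[X_0]$ to transport this to a smooth cover with Tschirnhausen bundle $E \otimes L$, invoking \autoref{lem:can} and semicontinuity at the end. The only organizational difference is that the paper absorbs your first twist (the one needed for \autoref{prop:pinching}) into the isotrivial family itself---replacing $\mathcal E$ by $\mathcal E \otimes \lambda$ once and for all---so that \autoref{prop:pinching} applies directly to $E_0$ and \autoref{prop:key} supplies the single variable twist $L$; this makes the uniformity of the bound $n$ in $L$ slightly more transparent than your two-step ``$L$ then $L'$, rename'' bookkeeping, but the content is the same.
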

\begin{proof}
  Choose an isotrivial degeneration $E_0$ of $E$ of the form
  \[ E_0 = L_1 \oplus \dots \oplus L_{d-1},\]
  where the $L_i$'s are line bundles with $\deg L_i + (2g_Y-1) \leq \deg L_{i+1}$.
  That is, let $(\Delta, 0)$ be a pointed curve and $\mathcal E$ a vector bundle on $Y \times \Delta$ such that $\mathcal E|_0 = E_0$ and $\mathcal E|_t \cong E$ for all $t \in \Delta \setminus \{0\}$.
  Such a degeneration exists by \autoref{prop:isotrivial}.
  After replacing $\mathcal E$ by $\mathcal E \otimes \lambda$ for a line bundle $\lambda$ on $Y$ of large degree, we may also assume that $\deg L_1 \geq 2g_Y-1$.
  
  By \autoref{prop:pinching}, there exists a nodal curve $W$ and a finite flat morphism $W \to Y$ with Tschirnhausen bundle $E_0$.
  By the key proposition (\autoref{prop:key}), there exists an $n$ such that for any line bundle $L$ of degree at least $n$, we can find a smooth curve $X_0$ and a finite map $X_0 \to Y$ with Tschirnhausen bundle $E_0' = E_0 \otimes L$ satisfying $H^1(N_{X_0/E_0'}) = 0$.
  Set $\mathcal E' = \mathcal E \otimes L$.
  Let $\mathcal H$ be the component of the relative Hilbert scheme of $\Tot(\mathcal E') / \Delta$ containing the point $[X_0 \subset E_0']$.
  Since $H^1(N_{X_0 / E'_0}) = 0$, the map $\mathcal H \to \Delta$ is smooth at $[X_0 \subset E'_0]$ by \cite[Theorem~3.2.12]{ser:06}.
  In particular, $\mathcal H \to \Delta$ is dominant.
  As a result, there exists a point $[X \subset \mathcal E'_t] \in \mathcal H$, where $X$ is smooth and $t \in \Delta$ is generic.
  By the choice of $\mathcal E$, we have $\mathcal E'_t = E \otimes L$.
  Since $H^1(N_{X_0/E_0 \otimes L}) = 0$, we can also ensure that $H^1(N_{X/E \otimes L}) = 0$ by semi-continuity.
  Let $\phi \from X \to Y$ be the projection.
  By \autoref{lem:can}, we get that $E_\phi \cong E \otimes L$ and $X \subset E \otimes L$ is the canonical affine embedding of $\phi$.
  The proof is now complete.
\end{proof}

\begin{remark}\label{rem:stack}
  \autoref{thm:E} can be stated in terms of moduli stacks of covers and bundles in the following way.
  Denote by $\mathcal H_d(Y)$ the stack whose $S$ points are finite flat degree $d$ morphisms $\phi \from C \to Y \times S$, where $C \to S$ is a smooth curve.
  Let $\Vec_{d-1}(Y)$ be the stack whose $S$ points are vector bundles of rank $(d-1)$ on $Y \times S$.
  Both $\mathcal H_d(Y)$ and $\Vec_{d-1}(Y)$ are algebraic stacks, locally of finite type, and smooth over $\k$.
  The rule
  \[ \tau \from \phi \mapsto E_\phi\]
  defines a morphism $\tau \from \mathcal H_d(Y) \to \Vec_{d-1}(Y)$.
  Then \autoref{thm:E} says that given $E \in \Vec_{d-1}(Y)$ and given any line bundle $L$ on $Y$ of large enough degree, there exists a point $[\phi \from X \to Y]$ of $\mathcal H_d(Y)$ such that $\tau(\phi) = E \otimes L$, and furthermore, such that the map $\tau$ is smooth at $[\phi]$.
\end{remark}

\subsection{Hurwitz spaces and Maroni loci}\label{sec:hurwitz}
We turn to the proof of \autoref{thm:generalstable} stated in the introduction.
First we establish notation and conventions regarding the various Hurwitz spaces.
Throughout \autoref{sec:hurwitz}, take $\k = \C$.

Let $\mathcal H_{d,g}^{\rm all}(Y)$ be the stack whose objects over $S$ are $S$-morphisms $\phi \from C \to Y \times S$, where $C \to S$ is a smooth, proper, connected curve of genus $g$, and $\phi$ is a finite morphism of degree $d$.
Observe that $\mathcal H_{d,g}^{\rm all}(Y)$ is an open substack of the Kontsevich stack of stable maps $\overline {\mathcal M}_g(Y, d[Y])$ constructed, for example, in \cite{ful.pan:97} or in \cite{beh.man:96}.
As a result, $\mathcal H_{d,g}^{\rm all}(Y)$ is a separated Deligne--Mumford stack of finite type over $\k$.
Using the deformation theory of maps \cite[Example~3.4.14]{ser:06}, it follows that $\mathcal H_{d,g}^{\rm all}(Y)$ is smooth and equidimensional of dimension $2b = (2g-2)-d(2g_Y-2)$.
Denote by $\mathcal H_{d,g}^{\rm simple}(Y) \subset \mathcal H_{d,g}^{\rm all}(Y)$ the open substack of simply branched maps, namely the substack whose $S$-points correspond to maps $\phi \from C \to Y \times S$ whose branch divisor $\br \phi \subset Y \times S$ is \'etale over $S$ (the branch divisor is defined as the vanishing locus of the discriminant \cite[\href{http://stacks.math.columbia.edu/tag/0BVH}{Tag 0BVH}]{sta:17}).
The transformation $\phi \mapsto \br\phi$ gives a morphism
\[ \mathcal H_{d,g}^{\rm all}(Y) \to \Sym^{2b} Y\]
with finite fibers.
Since the source is equidimensional of the same dimension as the target and the map is quasi-finite, each component of $\mathcal H_{d,g}^{\rm all}(Y)$ maps dominantly on $\Sym^{2b}(Y)$.
In particular, $\mathcal H_{d,g}^{\rm simple}(Y)$ is dense in $\mathcal H_{d,g}^{\rm all}(Y)$.
By a celebrated theorem of Clebsch \cite{cle:73}, if $g_Y = 0$, then $\mathcal H_{d,g}^{\rm simple}(Y)$ is connected (equivalently, irreducible).
More generally, by \cite[Theorem~9.2]{gab.kaz:87}, the connected ($=$ irreducible) components of $\mathcal H_{d,g}^{\rm all}(Y)$ are classified by the subgroup $\phi_* \pi_1(C)$ of $\pi_1(Y)$.
Recall that $\phi$ is called \emph{primitive} if $\phi_* \pi_1(C) = \pi_1(Y)$, or equivalently, if $\phi$ does not factor through an \'etale covering $\widetilde Y \to Y$.
Denote by $\mathcal H_{d,g}^{\rm primitive}(Y) \subset \mathcal H_{d,g}^{\rm all}(Y)$ the connected ($=$ irreducible) component whose points correspond to primitive covers.

The connection between primitive and simply branched covers is the following.
By \cite[Proposition~2.5]{ber.edm:84}, if $\phi \from C \to Y$ is a simply branched covering, then $\phi$ is primitive if and only if the monodromy map
\[ \pi_1(Y \setminus \br \phi) \to S_d \]
is surjective.
Therefore, we can view $\mathcal H_{d,g}^{\rm primitive}(Y)$ as a partial compactification of the stack of simply branched covers of $Y$ with full monodromy group $S_d$.
By convention, $\mathcal H_{d,g}(Y)$ (without any superscript) denotes the component $\mathcal H_{d,g}^{\rm primitive}(Y)$ of $\mathcal H_{d,g}^{\rm all}(Y)$.

Being open substacks of the Kontsevich stack, the Hurwitz stacks described above admit quasi-projective coarse moduli spaces, which we denote by the roman equivalent $H_{d,g}$ of $\mathcal H_{d,g}$.
Denote by $M_{r,k}(Y)$ the moduli space of vector bundles of rank $r$ and degree $k$ on $Y$.
Let $\mathcal U \subset \mathcal H_{d,g}(Y)$ be the (possibly empty) open substack consisting of points $[\phi] \in \mathcal H_{d,g}(Y)$ such that $E_\phi$ is semi-stable.
We have a morphism $\mathcal U \to M_{d-1,b}(Y)$ defined functorially as follows.
An object $\phi \from C \to Y \times S$ of $\mathcal U$ maps to the unique morphism $S \to M_{d-1,b}(Y)$ induced by the bundle $E_\phi$ on $Y \times S$.
Let $U \subset H_{d,g}(Y)$ be the coarse space of $\mathcal U$.
By the universal property of coarse spaces, the morphism $\mathcal U \to M_{d-1,b}(Y)$ descends to a morphism $U \to M_{d-1,b}(Y)$.
If $U$ is non-empty, then we can think of $U \to M_{d-1,b}(Y)$ as a rational map $H_{d,g}(Y) \dashrightarrow M_{d-1,b}(Y)$.

Recall that $Y$ is a smooth, projective, connected curve over $\C$.
\begin{theorem}
  \label{thm:stability}
  Let $g_Y \geq 2$.
  If $g$ is sufficiently large (depending on $Y$ and $d$), then the Tschirnhausen bundle associated to a general point of $H_{d,g}(Y)$ is stable.
  Moreover, the rational map
  \[H_{d,g}(Y) \dashrightarrow M_{d-1,b}(Y)\]
  given by $[\phi] \mapsto E_\phi$ is dominant.

  The same statement holds for $g_Y = 1$ with ``stable'' replaced by ``regular poly-stable.''
\end{theorem}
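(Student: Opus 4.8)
The plan is to reduce the whole statement to the construction of a single good cover, using the observation that a non-primitive cover cannot have a stable (or, for $g_Y=1$, regular poly-stable) Tschirnhausen bundle once $b>0$. Indeed, suppose $\phi\from X\to Y$ factors through a connected \'etale cover $\pi\from\widetilde Y\to Y$ of degree $e\geq 2$, say $\phi=\pi\circ\psi$ with $\psi\from X\to\widetilde Y$ of degree $d/e$. Using the trace splittings (available because $\charac\k\nmid d$),
\[ \phi_*\O_X = \pi_*(\psi_*\O_X) = \pi_*(\O_{\widetilde Y}\oplus E_\psi^\vee) = \O_Y\oplus E_\pi^\vee\oplus\pi_*(E_\psi^\vee),\]
so $E_\phi\cong E_\pi\oplus(\pi_*E_\psi^\vee)^\vee$. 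The summand $E_\pi$ has rank $e-1\geq 1$ and, since $\pi$ is \'etale, degree $0$. If $b=\deg E_\phi>0$ then $\phi$ is not \'etale (else $\deg E_\phi=0$), hence $e<d$ and $E_\pi$ is a proper nonzero direct summand of $E_\phi$ of slope $0<\mu(E_\phi)$; this contradicts stability of $E_\phi$, and also its poly-stability. Therefore it suffices to produce, for every sufficiently large $g$, one smooth connected degree-$d$ genus-$g$ cover of $Y$ whose Tschirnhausen bundle is stable (resp.\ regular poly-stable when $g_Y=1$) and at which the classifying morphism $\tau\from\mathcal H_d(Y)\to\Vec_{d-1}(Y)$ is smooth: such a cover is automatically primitive, hence is a point of $\mathcal H_{d,g}(Y)$.

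To build such a cover I would invoke \autoref{thm:E}. Since $b=g-1-d(g_Y-1)$ runs over all sufficiently large integers as $g$ does, it is enough to treat each residue class of $b$ modulo $d-1$ separately. When $g_Y\geq 2$, fix for each $\rho\in\{0,\dots,d-2\}$ a stable bundle $E_1^{(\rho)}$ of rank $d-1$ with $\deg E_1^{(\rho)}\equiv\rho\pmod{d-1}$; such bundles exist on any curve of genus at least $2$. Running \autoref{thm:E} on $E_1^{(\rho)}$ gives an integer $n_\rho$; set $N=\max_\rho n_\rho$. For $b$ large with $b\equiv\rho\pmod{d-1}$, choose a line bundle $L$ with $(d-1)\deg L=b-\deg E_1^{(\rho)}$ and $\deg L\geq N$. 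Then $E_0:=E_1^{(\rho)}\otimes L$ is stable of rank $d-1$ and degree $b$, and \autoref{thm:E} produces a smooth curve $X_0$ of genus $g$ with a degree-$d$ map $\phi_0\from X_0\to Y$ satisfying $E_{\phi_0}\cong E_0$ and $H^1(X_0,N_{X_0/E_0})=0$; the curve $X_0$ is connected (being a smoothing of a connected nodal curve, or because $E_0$ is stable), and by \autoref{rem:stack} the morphism $\tau$ is smooth at $[\phi_0]$. For $g_Y=1$ one instead takes $E_1^{(\rho)}$ to be a direct sum of $\gcd(d-1,\rho)$ pairwise non-isomorphic stable bundles of equal slope with total degree $\equiv\rho\pmod{d-1}$, so that $E_0$ is regular poly-stable; the rest is unchanged.

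With $[\phi_0]\in\mathcal H_{d,g}(Y)$ in hand, I would conclude as follows. Because $\mathcal H_{d,g}(Y)$ is irreducible and the condition ``$E_\phi$ is stable (resp.\ regular poly-stable)'' is open and holds at $[\phi_0]$, the Tschirnhausen bundle of a general point of $H_{d,g}(Y)$ is stable (resp.\ regular poly-stable). For dominance, $\tau$ is smooth, hence open, in a neighbourhood of $[\phi_0]$, so its image contains an open neighbourhood of $[E_0]$ in $\Vec_{d-1}(Y)$, lying inside the stable (resp.\ regular poly-stable) locus; since that locus maps openly to $M_{d-1,b}(Y)$, the image of the rational map $H_{d,g}(Y)\dashrightarrow M_{d-1,b}(Y)$ contains a nonempty open subset of this irreducible variety, so the map is dominant.

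Granting \autoref{thm:E} and \autoref{rem:stack}, the argument is short, and I expect the step needing the most care to be the opening reduction — recognizing that stability of the Tschirnhausen bundle by itself forces primitivity once $b>0$, which is exactly what lets us avoid any monodromy analysis or discussion of which component of the Hurwitz stack the constructed cover lies in. A minor additional point is to exhibit, in the $g_Y=1$ case, a regular poly-stable seed bundle $E_1^{(\rho)}$ in each residue class modulo $d-1$.
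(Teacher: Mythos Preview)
Your argument is correct and takes a genuinely different route from the paper in one key respect. The paper establishes primitivity of the constructed cover $X\to Y$ by tracking $\pi_1$ through the construction: $X$ arises as a smoothing of $X_0$ with rational tails attached, and since $\pi_1$ of a general fiber surjects onto $\pi_1$ of the special fiber, primitivity of $X_0\to Y$ propagates to $X\to Y$. You replace this topological step by the clean algebraic observation that when $b>0$ a non-primitive cover has $E_\pi$ (with $\deg E_\pi=0$) as a proper direct summand of $E_\phi$, which rules out stability and poly-stability; hence any cover with stable (resp.\ regular poly-stable) Tschirnhausen bundle is automatically primitive. This lets you feed a stable seed bundle directly into \autoref{thm:E} and skip the paper's second deformation step (deforming $E$ to a stable bundle inside $\Vec_{d-1}(Y)$ and carrying $X$ along). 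The cost is that you must pick a seed $E_1^{(\rho)}$ for each residue class of $b$ modulo $d-1$; the paper's approach is more uniform in $b$ but requires opening up \autoref{prop:key} to see the smoothing-of-nodal-curve structure. Both approaches yield dominance by the same mechanism, namely smoothness of $\tau$ at the constructed point (\autoref{rem:stack} for you, an explicit relative Hilbert scheme over a versal base for the paper). One small point worth making explicit in the $g_Y=1$ case: your openness claim for ``regular poly-stable'' holds because on an elliptic curve a semistable bundle whose Jordan--H\"older factors are pairwise non-isomorphic is automatically split (the relevant $\Ext^1$ groups vanish), so regular poly-stability is the open condition ``semistable with image in the regular locus of $M_{d-1,b}(Y)$.''
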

\begin{proof}
  Let $g_Y \geq 2$; the proof for $g_Y = 1$ is identical with ``stable'' replaced by ``regular poly-stable.''

  Let $\phi_0 \from X_0 \to Y$ be an element of the primitive Hurwitz space $H_{d,g_0}(Y)$ with Tschirnhausen bundle $E_0$.
  For some line bundle $L$ of sufficiently large degree, there exists $\phi \from X \to Y$ with Tschirnhausen bundle $E = E_0 \otimes L$ with $H^1(N_{X/E}) = 0$ by \autoref{prop:key}.
  From the proof of \autoref{prop:key}, we know that $X \to Y$ is obtained as a deformation of the singular curve formed by attaching vertical rational curves to $X_0$.
  Recall that in a deformation, the $\pi_1$ of a general fiber surjects on to the $\pi_1$ of the special fiber.
  Hence, since $\pi_1(X_0) \to \pi_1(Y)$ is surjective, so is $\pi_1(X) \to \pi_1(Y)$.
  That is, $X \to Y$ is primitive.

  We know that the moduli stack of vector bundles on $Y$ is irreducible \cite[Appendix~A]{hof:10} and therefore, the locus of stable bundles forms a dense open substack.
  So, we can find a vector bundle $\mathcal E$ on $Y \times \Delta$ such that $\mathcal E_{Y \times \{0\}} = E$ and $\mathcal E_{Y \times \{t\}}$ is stable for $t \in \Delta \setminus \{0\}$.
  As $H^1(N_{X/E}) = 0$, the curve $X \subset E$ deforms to the generic fiber of $\mathcal E \to \Delta$, by the same relative Hilbert scheme argument as used in the proof of \autoref{thm:E}.
  Let $X_t \subset \mathcal \mathcal \mathcal E_t$ be such a deformation.
  Then $X_t \to Y$ is a primitive cover with a stable Tschirnhausen bundle.
  We conclude that for sufficiently large $g$, the Tschirnhausen bundle of a general element of $H_{d,g}(Y)$ is stable.

  Let $\phi \from X \to Y$ be an element of $H_{d,g}(Y)$ with stable Tschirnhausen bundle $E$ such that $H^1(N_{X/E}) = 0$.
  The above argument shows that such coverings exist if $g$ is sufficiently large.
  Let $S$ be a versal deformation space for $E$ and $\mathcal E$ a versal vector bundle on $Y \times S$.
  See \cite[Lemma~2.1]{nar.ses:65} for a construction of $S$ in the analytic category.
  In the algebraic category, we can take $S$ to be a suitable Quot scheme (see, for example, \cite[Proposition~A.1]{hof:10}).
  Let $\mathcal H$ be the component of the relative Hilbert scheme of $\Tot(\mathcal E) / S$ containing the point $[X \subset E]$, and let $\mathcal H^{\rm sm} \subset \mathcal H$ be the open subset parametrizing $[X_t \subset \mathcal E_t]$ with smooth $X_t$.
  Since $H^1(N_{X/E}) = 0$, the map $\mathcal H^{\rm sm} \to S$ is smooth at $[X \subset E]$ by \cite[Theorem~3.2.12]{ser:06}.
  In particular, it is dominant.
  By \autoref{lem:can}, we know that for $[X_t \subset \mathcal E_t] \in \mathcal H^{\rm sm}$, the bundle $\mathcal E_t$ is indeed the Tschirnhausen bundle of $X_t \to Y$.
  We conclude that the map $H_{d,g}(Y) \dashrightarrow M_{d-1,b}(Y)$ is dominant.
\end{proof}

\begin{remark}
  It is natural to ask for an effective lower bound on $g$ in \autoref{thm:stability}.
  The best result is obtained by taking $X_0$ to be the disjoint union of $d$ copies of $Y$; then $g_0 = dg_Y - d + 1$.
  That $X_0$ is not connected does not pose any obstacle---the curve $X$ obtained by attaching vertical rational curves and smoothing out is connected and gives a primitive covering of $Y$.

  How many rational curves do we need to attach?
  The crucial requirement is the vanishing of $H^1(K)$, where $K$ is defined in \eqref{eq:defK}.
  Note that, in this case, we have $M = \O_{X_0}^{d-1}$.
  The proof of \autoref{prop:h1k} and \autoref{prop:key_singular} show that $h^1(M)+1$ many rational curves suffice.
  Attaching each rational curve raises the genus by $(d-1)$.
  Since $h^1(M) = d(d-1)g_Y$, we can thus produce an $X$ of genus $g$ where
  \[g \geq dg_Y + d(d-1)^2g_Y,\]
  and $g - g_Y \equiv 0 \pmod {d-1}$.
  By slightly changing the initial curve $X_0$, we get similar bounds of order $d^3g_Y$ for other congruence classes of $g \pmod {d-1}$.
  By studying the extension \eqref{eq:Kinf} more closely, it may be possible to sharpen these bounds, but we do not pursue this further.
\end{remark}

Recall that the Maroni locus $M(E)$ is the locally closed subset of $H_{d,g}(Y)$ defined by
\[ M(E) = \left\{[\phi] \in H_{d,g}(Y) \mid E_\phi \cong E\right\}.\]
\begin{theorem}
  \label{thm:maroni}
  Let $E$ be a vector bundle on $Y$ of rank $(d-1)$ and degree $e$.
  If $g$ is sufficiently large (depending on $Y$ and $E$),  then for every line bundle $L$ on $Y$ of degree $b-e$, the Maroni locus $M(E \otimes L)$ contains an irreducible component of the expected codimension $h^1(Y, \End E)$.
\end{theorem}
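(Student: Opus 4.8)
The plan is to exhibit a component of $M(E\otimes L)$ directly, exploiting that \autoref{thm:E} makes the Tschirnhausen morphism $\tau\from\mathcal H_{d}(Y)\to\Vec_{d-1}(Y)$ of \autoref{rem:stack} smooth at a suitable point. Write $E'=E\otimes L$; as $\deg L=b-e$, the bundle $E'$ has rank $d-1$ and degree $b$, the numerics of a Tschirnhausen bundle of a degree $d$ genus $g$ cover. Taking $g$ large enough that $\deg L$ exceeds the threshold $n$ of \autoref{thm:E}, that theorem produces a smooth degree $d$ cover $\phi_{0}\from X_{0}\to Y$ of genus $g$ with $E_{\phi_{0}}\cong E'$ and $H^{1}(X_{0},N_{X_{0}/E'})=0$ for the canonical affine embedding $X_{0}\subset\Tot(E')$; as in the proof of \autoref{thm:stability}, $\phi_{0}$ may be taken primitive (it is a deformation of a nodal cover built by pinching copies of $Y$, for which $\pi_{1}\to\pi_{1}(Y)$ is surjective, and surjectivity of $\pi_{1}$ passes to the general fibre of a deformation), so $[\phi_{0}]\in H_{d,g}(Y)$. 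By \autoref{rem:stack}, $\tau$ is smooth at $[\phi_{0}]$; in particular the derivative
\[
d\tau\from T_{[\phi_{0}]}\mathcal H_{d}(Y)\longrightarrow\Ext^{1}_{Y}(E',E')=H^{1}(Y,\End E')
\]
is surjective, and the target has dimension $h^{1}(Y,\End E)$ since $\End E'\cong\End E$.

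First I would extract the codimension lower bound. As $\mathcal H_{d}(Y)$ is smooth of dimension $2b$ at the genus-$g$ point $[\phi_{0}]$ (\autoref{sec:hurwitz}), the kernel of $d\tau$ has dimension $2b-h^{1}(\End E)$. The Zariski tangent space at $[\phi_{0}]$ of any component of $M(E\otimes L)$ through $[\phi_{0}]$ is contained in this kernel: a first-order deformation of $\phi_{0}$ staying in $M(E\otimes L)$ keeps $E_{\phi}$ isomorphic to $E'$, hence maps to the trivial first-order deformation of $E'$. Consequently every component of $M(E\otimes L)$ through $[\phi_{0}]$ has codimension at least $h^{1}(Y,\End E)$ in $H_{d,g}(Y)$.

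To produce a component of exactly that codimension I would exhibit an irreducible family of the right dimension inside the Hilbert scheme $\Hilb(\Tot E')$. Since $H^{1}(N_{X_{0}/E'})=0$, this scheme is smooth at $[X_{0}]$ of dimension $h^{0}(N_{X_{0}/E'})=\chi(N_{X_{0}/E'})=3b-d(d-1)(g_{Y}-1)$, where $N_{X_{0}/E'}$ has rank $d-1$ and degree $(d+2)b$ by a Chern-class computation (using $c_{1}T_{\Tot E'}=\pi^{*}(c_{1}E'+c_{1}T_{Y})$ and $2g-2=2b+2d(g_{Y}-1)$). Let $\mathcal H^{\circ}$ be the open subset of the component of $[X_{0}]$ parametrising smooth curves whose fibre over a general point of $Y$ is in affine general position; it is irreducible and contains $[X_{0}]$. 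By \autoref{lem:can} every $[X'\subset\Tot E']\in\mathcal H^{\circ}$ is the canonical affine embedding of the cover $X'\to Y$, which therefore has Tschirnhausen bundle $\cong E'$ and is primitive (being a deformation of $\phi_{0}$), so there is a morphism $\mathcal H^{\circ}\to M(E\otimes L)$ carrying $[X_{0}]$ to $[\phi_{0}]$. The group $G=\Aut(\Tot(E')/Y)\cong H^{0}(Y,E')\rtimes\Aut(E')$ of affine-linear automorphisms over $Y$ acts on $\mathcal H^{\circ}$, and by the uniqueness part of \autoref{lem:can} its orbits are precisely the fibres of $\mathcal H^{\circ}\to M(E\otimes L)$, with finite stabilisers since a primitive cover has finite automorphism group. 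Hence the image has dimension
\[
h^{0}(N_{X_{0}/E'})-\dim G=\bigl(3b-d(d-1)(g_{Y}-1)\bigr)-\bigl(h^{0}(E')+h^{0}(\End E')\bigr).
\]
For $g$ large one has $\deg L\gg 0$, hence $H^{1}(E')=0$ and $h^{0}(E')=\chi(E')=b-(d-1)(g_{Y}-1)$; substituting this and $h^{0}(\End E')-h^{1}(\End E')=\chi(\End E')=-(d-1)^{2}(g_{Y}-1)$, the right-hand side simplifies to $2b-h^{1}(\End E)$. Thus the closure of the image of $\mathcal H^{\circ}$ is an irreducible subvariety of $M(E\otimes L)$ of codimension $h^{1}(Y,\End E)$ through $[\phi_{0}]$; combined with the lower bound above, it is a component of $M(E\otimes L)$ of the expected codimension.

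The genuinely hard input, \autoref{thm:E} (equivalently the vanishing $H^{1}(N_{X_{0}/E'})=0$), is already in hand, so what remains is largely bookkeeping; the step needing the most care is the codimension lower bound, which uses precisely that $\tau$ is smooth at the particular point $[\phi_{0}]$ produced by \autoref{prop:key}, exactly as packaged in \autoref{rem:stack}. The other points to watch are the passage between the algebraic stacks $\mathcal H_{d}(Y)$, $\Vec_{d-1}(Y)$ and the coarse space $H_{d,g}(Y)$, where the residual gerbe $B\Aut(E')$ of dimension $-h^{0}(\End E')$ inside $\Vec_{d-1}(Y)$ and the $h^{0}(\End E')$-dimensional fibres of the map forgetting the trivialisation of the Tschirnhausen bundle exactly cancel, and checking primitivity so that the constructed covers land in $H_{d,g}(Y)$ rather than in the full Hurwitz stack.
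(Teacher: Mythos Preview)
Your proof is correct and follows essentially the same strategy as the paper: both arguments pass through the Hilbert scheme of $\Tot(E')$, identify the fibres of the map to $M(E')$ with orbits of the group of affine-linear automorphisms of $E'/Y$, and carry out the identical dimension count $\chi(N_{X_0/E'})-\dim G=2b-h^1(\End E)$, with the hard input being \autoref{prop:key}/\autoref{thm:E}.

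Where you differ is in how the two directions of the dimension equality are established. The paper simply asserts $\dim_{[\phi]}M(E')=\dim_{[X]}H^{\mathrm{sm}}-\dim A$, relying implicitly on the fact that $\tau\from H^{\mathrm{sm}}\to M(E')$ is surjective with all fibres single $A$-orbits of the same dimension (so components of $M(E')$ through $[\phi]$ correspond, via the connected group $A$, to components of $H^{\mathrm{sm}}$ through $[X]$). You instead separate the two bounds: the upper bound on $\dim M(E')$ comes from your tangent-space argument using the smoothness of $\tau\from\mathcal H_d(Y)\to\Vec_{d-1}(Y)$ at $[\phi_0]$ (\autoref{rem:stack}), and the lower bound from the explicit irreducible family. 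Your route makes the ``expected codimension'' inequality more transparent and avoids having to argue that components match up across the quotient by $A$; the paper's route is shorter but leaves that point tacit. You also make explicit two points the paper omits here (primitivity of $\phi_0$, and the stack-versus-coarse-space bookkeeping), both of which are handled correctly.
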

\begin{proof}
  Set $E' = E \otimes L$.
  Let $H^{\rm sm}$ be the open subset of the Hilbert scheme of curves in $\Tot(E')$ parametrizing $[X \subset E']$ with $X$ smooth of genus $g$ embedded so that for all $y \in Y$, the scheme $X_y \subset E'_y$ is in affine general position.
  By \autoref{lem:can}, the Tschirnhausen bundle map 
  \[ \tau \from H^{\rm sm} \to M(E')\]
  is a surjection. 
  Furthermore, the fibers of $\tau$ are orbits under the group $A$ of affine linear transformations of $E'$ over $Y$.
  Plainly, the action of the group is faithful.
  
  By \autoref{prop:key}, there exists $[X \subset E'] \in H^{\rm sm}$ with $H^1(N_{X/E'}) = 0$.
  We can now do a dimension count.
  Note that $N_{X/E'}$ is a vector bundle on $X$ of rank $(d-1)$ and degree $(d+2)b$, where $b = g_X-1 - d(g_Y-1)$.
  Then the dimension of $H^{\rm sm}$ at $[X \subset E']$ is given by
  \begin{align*}
    \dim_{[X]}H^{\rm sm} &= \chi(N_{X/E'}) \\
                &= (d+2)b - (g_X-1)(d-1) \\
                &= 3b - d(d-1)(g_Y-1)
  \end{align*}
  The dimension of the fiber of $\tau$ is given by
  \begin{align*}
    \dim A &= \hom(E'^\vee, \O_Y \oplus E'^\vee) \\
           &= b - d(d-1)(g_Y-1) + h^1(\End E).
  \end{align*}
  As a result, the dimension of $M(E')$ at $[\phi]$ is given by
  \begin{align*}
    \dim_{[\phi]} M(E') &= \dim_{[X]}H^{\rm sm} - \dim A \\
                        &= 2b - h^1(\End E).
  \end{align*}
  Since $\dim H_{d,g}(Y) = 2b$, the proof is complete.
\end{proof}

\section{Higher dimensions} 
\label{sec:higher_dimensional_}

In this section, we discuss the possibility of having an analogue of \autoref{thm:main} for higher dimensional $Y$.
For simplicity, take $\k = \C$.

Let us begin with the following question.
\begin{question}\label{question:analogue}
  Let $Y$ be a smooth projective variety, $L$ an ample line bundle on $Y$, and $E$ a vector bundle of rank $(d-1)$ on $Y$.
  Is $E \otimes L^{n}$ a Tschirnhausen bundle for all sufficiently large $n$?
\end{question}

The answer to \autoref{question:analogue} is ``No'', at least without additional hypotheses. 
\begin{example}\label{ex:fail1}
  Take $Y = \P^{4}$, and $E = \O(a) \oplus \O(b)$.
  Then a sufficiently positive twist $E'$ of $E$ cannot be the Tschirnhausen bundle of a smooth branched cover $X$.

  To see this, recall that the data of a Gorenstein triple cover $X \to Y$ with Tschirnhausen bundle $E'$ is equivalent to the data of a nowhere vanishing global section of $\Sym^{3}E' \otimes (\det E')^{\vee}$ (\cite{mir:85} or \cite{cas.eke:96}).
  For $E' = E \otimes L^n$ with large $n$, the rank $4$ vector bundle $\Sym^{3}E' \otimes (\det E')^{\vee}$ is very ample.
  Thus, its fourth Chern class is nonzero.
  Therefore, a general global section must vanish at some points.

  In fact, it is easy to see by direct calculation that the fourth Chern class of $\Sym^{3}E \otimes (\det E)^{\vee}$ can vanish if and only if $E = \O(a) \oplus \O(b)$ where $b = 2a$.
  Conversely, $E = \O(a) \oplus \O(2a)$ is the Tschirnhausen bundle of a cyclic triple cover of $\P^4$.
  Thus, $E = \O(a) \oplus \O(b)$ can be a Tschirnhausen bundle of a smooth triple cover of $\P^4$ if and only if $b=2a$.  
\end{example}     
\autoref{ex:fail1} illustrating the failure of \autoref{thm:main} can be generalized to all degrees $\geq 3$, provided the base $Y$ is allowed to be high dimensional.

\begin{proposition}\label{prop:threshold}
  Let $d \geq 3$.
  The answer to \autoref{question:analogue} is ``No'' for all $Y$ of dimension at least $d {d \choose 2}$.
\end{proposition}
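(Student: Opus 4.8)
The plan is to promote the Chern‑class obstruction of \autoref{ex:fail1} to every degree $d \ge 3$ at once, replacing the degree‑specific structure theorems by a direct analysis of the multiplication of the $\O_Y$-algebra $\phi_* \O_X$.

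Suppose, for contradiction, that $\phi \from X \to Y$ is a finite flat cover of degree $d$ with $X$ smooth and Tschirnhausen bundle $E' = E \otimes L^{n}$. Using the trace splitting (which needs $\charac \k = 0$ or $\charac \k > d$), write $\phi_* \O_X = \O_Y \oplus E'^\vee$. The multiplication of the algebra $\phi_* \O_X$, restricted to the second summand, is a symmetric $\O_Y$-linear map $\mu \from \Sym^2 E'^\vee \to \O_Y \oplus E'^\vee$ (the factorization through $\Sym^2$ is commutativity of $\phi_*\O_X$, and uses $\charac \k \neq 2$). Equivalently, $\mu$ is a global section of the vector bundle
\[ \mathcal V \;:=\; \Sym^2 E' \otimes (\O_Y \oplus E'^\vee), \]
whose rank is $\binom{d}{2}\cdot d = d\binom{d}{2}$; this number is the source of the bound in the statement.

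The key point is that $\mu$ is nowhere vanishing. Since $X$ and $Y$ are smooth and $\phi$ is flat, the relative dualizing sheaf $\omega_{X/Y} = \omega_X \otimes \phi^*\omega_Y^{\vee}$ is a line bundle; hence $\phi$ is a Gorenstein morphism and each fiber $X_y$ is Gorenstein. Were $\mu$ to vanish at some $y$, the fiber algebra $\O_{X_y}$ would be the square‑zero extension $\k \oplus V$, where $V$ is the fiber of $E'^\vee$ at $y$ and $V\cdot V = 0$; such an algebra has length $d$ but, since $\dim V = d-1 \ge 2$, its socle is all of $V$ and has dimension $>1$, so it is not Gorenstein — a contradiction. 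Thus $\mu$ gives a trivial sub‑line‑bundle $\O_Y \hookrightarrow \mathcal V$, and therefore $c_{d\binom{d}{2}}(\mathcal V) = 0$.

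Finally we rule this out by positivity. Because $E' = E \otimes L^{n}$, one has $\mathcal V \cong (\Sym^2 E \otimes L^{2n}) \oplus (\Sym^2 E \otimes E^\vee \otimes L^{n})$, so for $n$ sufficiently large (depending on $E$ and $L$) each summand, hence $\mathcal V$ itself, is ample. Now restrict $\mathcal V$ to a general complete intersection $Z \subset Y$ of $\dim Y - d\binom{d}{2}$ very ample divisors: then $\mathcal V|_{Z}$ is an ample vector bundle whose rank equals $\dim Z = d\binom{d}{2}$, so by the standard positivity of ample vector bundles its top Chern class is nonzero — yet $\mathcal V|_Z$ still carries the nowhere‑vanishing section $\mu|_Z$, forcing that top Chern class to vanish. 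This contradiction shows that no such $\phi$ exists once $\dim Y \ge d\binom{d}{2}$, which is the proposition. The delicate step is the non‑vanishing of $\mu$: this is precisely where $d\ge 3$ enters (so that the offending fiber fails to be Gorenstein) together with the smoothness of $X$ (which guarantees Gorenstein fibers), and it is what takes the place of the explicit Chern‑class count used for $d=3$ in \autoref{ex:fail1}.
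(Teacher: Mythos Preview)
Your proof is correct and follows essentially the same approach as the paper: both argue that the multiplication map on the Tschirnhausen summand gives a global section of $\Sym^2 E' \otimes (\O_Y \oplus E'^\vee)$ that must be nowhere vanishing when $X$ is smooth (else a fiber fails to be Gorenstein), and that this is impossible once the bundle is made ample by a large twist and $\dim Y$ exceeds its rank $d\binom{d}{2}$. Your version is somewhat more explicit than the paper's in two places---you spell out the socle argument for why the square-zero fiber is not Gorenstein, and you justify the nonvanishing of the top Chern class by restriction to a complete-intersection subvariety of the correct dimension---but the underlying argument is the same.
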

\begin{proof}
  Let $\phi \from X \to Y$ be a finite, flat, degree $d$ map.
  Then the sheaf $\phi_{*}\O_{X}$ is a sheaf of $\O_{Y}$-algebras, and it splits as $\phi_{*} = \O_{Y}\oplus E^{\vee}$.  
  
  Suppose over some point $y \in Y$, the multiplication map
  \[m : \Sym^{2}E^{\vee} \to \phi_{*}\O_{X}\]
  is identically zero.
  Then, we have a $k$-algebra isomorphism
  \[(\phi_{*}\O_{X})|_y \cong \k[x_{1}, \dots, x_{d-1}]/(x_{1}, \dots, x_{d-1})^{2}.\]
  That is, $\phi^{-1}(y)$ is isomorphic to the length $d$ ``fat point'', defined by the square of the maximal ideal of the origin in an affine space.
  When $d \geq 3$, these fat points are not Gorenstein.
  Since $Y$ is smooth, this implies $X$ can not even be Gorenstein, let alone smooth.
  
  Now, if $E$ is a vector bundle on $Y$ and $L$ is a sufficiently positive line bundle, then the bundle 
  \[M := \Hom(\Sym^{2}(E \otimes L)^{\vee}, \O_{Y} \oplus (E \otimes L)^{\vee})\]
  is very ample.
  A general global section $m \in H^{0}(Y, M)$ will vanish identically at some points $y \in Y$ provided 
  \[ \dim Y \geq \rk M = d {d \choose 2}.\]
  We conclude that if $\dim Y \geq d {d \choose 2}$, then \autoref{question:analogue} has a negative answer.
\end{proof}
Observe that \autoref{prop:threshold} remains true even if we relax the requirement that $X$ be smooth to $X$ be Gorenstein.

The following result due to Lazarsfeld suggests the possibility that \autoref{prop:threshold} may be true with a much better lower bound than $d { d \choose 2}$.
\begin{proposition}\label{prop:laz}
  Let $E$ be a vector bundle of rank $(d-1)$ on $\P^{r}$, where $r \geq d+1$.
  Then $E(n)$ is not a Tschirnhausen bundle of a smooth, connected cover for sufficiently large $n$.
\end{proposition}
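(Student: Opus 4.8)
The plan is to argue by contradiction, the only substantial input being Lazarsfeld's Barth‑type theorem. Suppose that for some large $n$ the bundle $F := E(n)$ is the Tschirnhausen bundle of a finite cover $f \from X \to \P^r$ with $X$ smooth and connected, so that $f_*\O_X = \O_{\P^r} \oplus F^\vee$. Since $r \geq d+1$, Lazarsfeld's theorem \cite{laz:80} gives $\Pic(X) = \Z$, generated by $H := f^*\O_{\P^r}(1)$. Because $X$ and $\P^r$ are smooth, the relative dualizing sheaf $\omega_{X/\P^r} = \omega_X \otimes f^*\omega_{\P^r}^{\vee}$ is a line bundle, hence of the form $f^*\O_{\P^r}(a)$ for some $a \in \Z$. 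Moreover $\omega_{X/\P^r} \cong \O_X(R)$, where $R$ is the ramification divisor of $f$; this $R$ is effective and nonzero, since an unramified cover of the simply connected $\P^r$ is split, which is impossible for a connected cover of degree $d \geq 2$. Therefore $a \geq 1$.

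Next I would push everything down to $\P^r$. Duality for finite flat morphisms gives $f_*\omega_{X/\P^r} = (f_*\O_X)^\vee = \O_{\P^r}\oplus F$, while the projection formula gives $f_*\left(f^*\O_{\P^r}(a)\right) = \O_{\P^r}(a)\otimes f_*\O_X = \O_{\P^r}(a) \oplus F^\vee(a)$. Hence
\[ \O_{\P^r} \oplus F \;\cong\; \O_{\P^r}(a) \oplus F^\vee(a) \]
as bundles on $\P^r$. Writing $e$ for the integer with $\det E \cong \O_{\P^r}(e)$ and $\delta := e + n(d-1) = \deg\det F$, comparison of first Chern classes in this isomorphism forces $\delta = ad - \delta$, so that $a = 2\delta/d$; as $n \to \infty$ we have $\delta \to \infty$ and hence $a \to \infty$.

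Finally I would dualize the displayed isomorphism and take global sections. On the left, $H^0(\O_{\P^r}\oplus F^\vee) = \k \oplus H^0(E^\vee(-n))$, which equals $\k$ once $n$ is large. On the right, $H^0(\O_{\P^r}(-a) \oplus F(-a)) = 0 \oplus H^0(E(n-a))$ because $a \geq 1$, and $n - a = \bigl(-(d-2)n - 2e\bigr)/d$, which tends to $-\infty$ as $n \to \infty$ since $d \geq 3$. (This is the only place the hypothesis $d \geq 3$, implicit in the statement, is used, and it is necessary: for $d = 2$ every $\O_{\P^r}(m)$ with $m \geq 1$ is the Tschirnhausen line bundle of a smooth connected double cover.) Hence $H^0(E(n-a)) = 0$ for $n$ large, the right‑hand side is $0$, and we contradict $\k \neq 0$. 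Since all the thresholds "$n$ sufficiently large" depend only on $E$ (and $d$, $r$), not on the cover $X$, this proves the proposition.

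The real work here is done entirely by Lazarsfeld's theorem, so in that sense there is no hard internal step — the "obstacle" is just the willingness to invoke it. Beyond that, the only points needing a little care are the identification $\omega_{X/\P^r} \cong \O_X(R)$ with $R$ effective and nonzero (whence $a \geq 1$), and the bookkeeping that the finitely many "$n$ large" conditions can be met at once with a bound depending only on $E$.
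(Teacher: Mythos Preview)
Your argument is correct and shares the paper's core: invoke Lazarsfeld's Barth-type theorem to get $\omega_{X/\P^r} \cong f^*\O(a)$, then push forward and use duality to obtain
\[
  \O \oplus F \;\cong\; \O(a) \oplus F^\vee(a).
\]
The endgames diverge slightly. The paper appeals to Krull--Schmidt: from the displayed isomorphism, $\O(a)$ is a direct summand of $F = E(n)$, so $\O(a-n)$ is a summand of $E$; since $a - n = (2e + n(d-2))/d$ is unbounded in $n$, repeating this for infinitely many $n$ would force $E$ to have line bundle summands of infinitely many degrees. You instead dualize and compare $H^0$, which avoids invoking uniqueness of decompositions. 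Both routes are short; yours is a bit more explicit about the threshold on $n$ and, importantly, about the hidden hypothesis $d \geq 3$. The paper's proof also needs this (for $d=2$ the quantity $a-n$ is constant and no contradiction arises, as indeed it should not), but does not flag it.
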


\begin{proof}
  The proof relies on \cite[Proposition~3.1]{laz:80} which states that for a branched cover $\phi \from X \to \P^{r}$ of degree $d \leq r-1$ with $X$ smooth and connected, the pullback map
  \[\phi^{*} \from \Pic(\P^{r}) \to \Pic X\]
  is an isomorphism. 
  In particular, the dualizing sheaf $\omega_\phi$ is isomorphic to $\phi^* \O(l)$ for some $l$.
  Note that $\omega_\phi$ is represented by an effective divisor (the ramification divisor), so $ l > 0$.  
  Therefore, we get 
  \[ \O_{\P^r} \oplus E = \phi_* \omega_\phi = \phi_* \O(l) = \O_{\P^r}(l) \oplus E^\vee(l).\]
  Since $X$ is connected, $E^\vee$ has no global sections.
  Using this, it is easy to conclude from the above sequence that $\O_{\P^r}(l)$ is a summand of $E$.
  
  Suppose $E(n)$ is a Tschirnhausen bundle of a smooth connected cover for infinitely many $n$.
  Applying the reasoning above with $E$ replaced by $E(n)$ shows that $E$ must have line bundle summands of infinitely many degrees.
  Since this is impossible, the proposition follows.
\end{proof}

The reasoning in \autoref{ex:fail1} implies the following.
\begin{proposition}
  For degree 3, \autoref{question:analogue} has an affirmative answer if and only if $\dim Y < 4$.
\end{proposition}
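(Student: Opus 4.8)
The plan is to run the analysis of \autoref{ex:fail1} in both directions, carefully tracking the rank (which is $4$) of the relevant bundle against $\dim Y$. Fix a rank $2$ bundle $E$ on $Y$ and an ample line bundle $L$, put $E'=E\otimes L^{n}$, and recall Miranda's dictionary used in \autoref{ex:fail1}: a Gorenstein triple cover of $Y$ with Tschirnhausen bundle $E'$ is the same datum as a nowhere vanishing section of the rank $4$ bundle $M:=\Sym^{3}E'\otimes(\det E')^{\vee}$, equivalently a divisor $X\in\bigl|\O_{\P E'}(3)\otimes\pi^{*}(\det E')^{\vee}\bigr|$ on the $\P^{1}$-bundle $\pi\from\P E'\to Y$ which contains no fibre of $\pi$. (That this is the twist recovering $E'$ is a relative Serre duality computation: pushing forward $0\to\O_{\P E'}(-3)\otimes\pi^{*}\det E'\to\O_{\P E'}\to\O_{X}\to0$ along $\pi$ yields an extension $0\to\O_{Y}\to\phi_{*}\O_{X}\to(E')^{\vee}\to0$, which the trace splits since $\charac\k=0$, so $E_{\phi}\cong E'$.)

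For the implication ``$\dim Y\geq4\Rightarrow$ negative answer'', I would take $E=\O_{Y}^{\oplus2}$. Then $E'=(L^{n})^{\oplus2}$ and $M=(L^{n})^{\oplus4}$, so a nowhere vanishing section of $M$ is a quadruple of sections of $L^{n}$ with no common zero. But four members of the ample system $|L^{n}|$ on a variety of dimension $\geq4$ always meet (equivalently $c_{4}(M)=(n\,c_{1}L)^{4}\neq0$ because $(c_{1}L)^{\dim Y}>0$), so no such quadruple exists. Hence $E\otimes L^{n}$ is the Tschirnhausen bundle of no Gorenstein---in particular no smooth---triple cover, for every $n$, and so \autoref{question:analogue} fails for this $(E,L)$.

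For the converse, assume $\dim Y\leq3$ (the case $\dim Y=1$ is already \autoref{thm:main}, but the argument below is uniform). Using $\P E'=\P E$ and $\O_{\P E'}(1)=\O_{\P E}(1)\otimes\pi^{*}L^{n}$, the line bundle $D_{n}:=\O_{\P E'}(3)\otimes\pi^{*}(\det E')^{\vee}$ equals $\O_{\P E}(3)\otimes\pi^{*}\bigl((\det E)^{\vee}\otimes L^{n}\bigr)$, which is very ample on $\P E$ for $n\gg0$; for such $n$ the sheaf $\pi_{*}D_{n}=\Sym^{3}E\otimes(\det E)^{\vee}\otimes L^{n}$ is globally generated, so $H^{0}(\P E',D_{n})\to H^{0}(F_{y},D_{n}|_{F_{y}})\cong H^{0}(\P^{1},\O(3))$ is surjective for every fibre $F_{y}$ of $\pi$. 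By Bertini in characteristic $0$ a general $X\in|D_{n}|$ is smooth and, since $\dim\P E'=\dim Y+1\geq2$, irreducible. Moreover the members of $|D_{n}|$ containing some fibre of $\pi$ sweep out a subvariety of dimension at most $\dim Y+\bigl(\dim|D_{n}|-4\bigr)<\dim|D_{n}|$, the strict inequality holding exactly because $\dim Y\leq3$; so a general $X$ contains no fibre. Then $\phi:=\pi|_{X}\from X\to Y$ is proper with finite fibres, hence finite; it is flat because $X$ is Cohen--Macaulay, $Y$ is regular, and the fibres are $0$-dimensional; its degree is $D_{n}\cdot F_{y}=3$; and the push-forward computation above gives $E_{\phi}\cong E'$. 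Thus $E\otimes L^{n}$ is Tschirnhausen for all $n\gg0$.

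The decisive point---and the only place where $\dim Y\leq3$ enters---is the dimension count in the previous paragraph: a fibre $F_{y}\cong\P^{1}$ costs codimension $4=h^{0}(\P^{1},\O(3))=\rk M$ in $|D_{n}|$, while the fibres form a family of dimension $\dim Y$, so a general divisor in $|D_{n}|$ meets every fibre properly precisely when $\dim Y<4$. When $\dim Y\geq4$ this fails, and the Chern-class obstruction above takes over. I expect no deep obstacle here; the real content is to make this count precise, together with citing Miranda's structure theorem and pinning down the relative-duality twist, since that count is exactly what flips at $\dim Y=4$.
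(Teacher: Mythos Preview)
Your proof is correct and follows essentially the same approach as the paper: both use Miranda's structure theorem to identify triple covers with nowhere-vanishing sections of the rank-$4$ bundle $\Sym^{3}E'\otimes(\det E')^{\vee}$, and both hinge on the threshold $\rk=4$ versus $\dim Y$. Your dimension count in $|D_n|$ is just the linear-system reformulation of the paper's ``a general section of a globally generated rank-$4$ bundle on a variety of dimension $<4$ is nowhere vanishing,'' and your choice of $E=\O_Y^{\oplus2}$ for the $\dim Y\geq4$ direction is a clean special case of the paper's argument (which works for arbitrary $E$ via positivity of $c_4$ for ample bundles, though your version avoids invoking that).
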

\begin{proof}
  Let $\phi \from X \to Y$ be a Gorenstein finite covering of degree 3 with Tschirnhausen bundle $E$.
  Then by the structure theorem of triple covers in \cite{mir:85} or \cite{cas.eke:96}, we get an embedding $X \subset \P E$ as a divisor of class $\O_{\P E}(3)$.
  Thus, $X$ is given by a global section on $\P E$ of $\O_{\P E}(3)$, or equivalently a global section on $Y$ of $\Sym^3 E \otimes \det E^\vee$.
  Note that since $X \to Y$ is flat, the global section of $\Sym^3 E \otimes \det E^\vee$ is nowhere vanishing.
  
  Suppose we are given an arbitrary rank $2$ vector bundle $E$ on $Y$.
  Set $D = \O_{\P E}(3)$ and $V = \Sym^3 E \otimes \det E^\vee$.
  If we twist $E$ by $L^n$, then  $\P E$ is unchanged but $D$ changes to $D + 3nL$ and $V$ changes to $V \otimes L^n$.
  For sufficiently large $n$, the bundle $V \otimes L^n$ is ample.
  If $\dim Y < 4$, then a general section of $V \otimes L^n$ is nowhere zero on $Y$.
  Furthermore, the divisor $X \subset \P E$ cut out by the corresponding section of $\O(D+3nL)$ is smooth by Bertini's theorem.
  By construction, the resulting $X \to Y$ has Tschirnhausen bundle $E \otimes L^n$.
  
  On the other hand, if $\dim Y \geq 4$, then every global section of $V \otimes L^n$ must vanish at some point in $Y$.
  Thus, $E \otimes L^n$ cannot arise as a Tschirnhausen bundle.
\end{proof}

\subsection{Modifications of the original question}
Following the discussion in the previous section, natural modified versions of \autoref{question:analogue} emerge. 
The first obvious question is the following.
\begin{question}
  Is the analogue of \autoref{thm:main} true for all $Y$ with $\dim Y \leq d$?
\end{question}

We can also relax the finiteness assumption on $\phi$.
\begin{question}
  Let $Y$ be a smooth projective variety, $E$ a vector bundle in $Y$.
  Is $E$ isomorphic to $(\phi_* \O_X/\O_Y)^\vee$, up to a twist, for a \emph{generically} finite map $\phi \from X \to Y$ with smooth $X$?
\end{question}  

\begin{remark}
  A similar question is addressed in work of Hirschowitz and Narasimhan \cite{hir.nar:94}, where it is shown that any vector bundle on $Y$ is the direct image of {\sl some} line bundle on a smooth variety $X$ under a generically finite morphism. 
\end{remark} 

Alternatively, we can keep the finiteness requirement on $\phi$ in exchange for the smoothness of $X$.
We end the paper with the following open-ended question.

\begin{question}
  What singularity assumptions on $X$ (or the fibers of $\phi$) yield a positive answer to \autoref{question:analogue}?
\end{question}

\def\cprime{$'$}

\bibliographystyle{abbrv}

\end{document}